\documentclass[12pt]{amsart}

\usepackage{amssymb,amsfonts}
\usepackage[T1]{fontenc}
\usepackage{amsthm}
\usepackage{graphicx}
\usepackage[all,arc]{xy}
\usepackage[colorlinks=true, allcolors=blue]{hyperref}
\usepackage{enumerate}
\usepackage{bbm}
\usepackage{dsfont}
\usepackage{mathrsfs}
\usepackage{tikz-cd}
\usepackage{import}
\usepackage[utf8]{inputenc}
\usepackage{hyperref}
\usepackage{tikz}
\usepackage{multirow}
\usepackage{booktabs} 
\usepackage{array} 

\usepackage[left=1in,top=1in,right=1in,bottom=1in]{geometry}

\usepackage{mathtools}
\usepackage{subcaption}

\newtheorem{thm}{Theorem}[section]
\newtheorem{cor}[thm]{Corollary}
\newtheorem{prop}[thm]{Proposition}
\newtheorem{remark}[thm]{Remark}
\newtheorem{lem}[thm]{Lemma}
\newtheorem{conjecture}[thm]{Conjecture}
\newtheorem{quest}[thm]{Question}

\theoremstyle{definition}
\newtheorem{defn}[thm]{Definition}

\newtheorem{example}[thm]{Example}
\newtheorem{rmk}[thm]{Remark}

\newcommand{\Q}{\mathbb{Q}}

\newcommand{\coker}{\operatorname{coker}}
\newcommand{\conf}{\operatorname{Conf}}
\newcommand{\uconf}{\operatorname{UConf}}
\newcommand{\im}{\operatorname{im}}
\newcommand{\ind}{\operatorname{Ind}}
\newcommand{\triv}{\mathrm{triv}}
\newcommand{\reg}{\mathrm{reg}}
\newcommand{\Sp}{\mathrm{Sp}}

\newcommand{\cy}[1]{\textcolor{red}{#1}}

\title{Computing stable homology representations of graph configuration spaces}
\author{Eric Ramos}
\address{Department of Mathematical Sciences, Stevens Institute of Technology, Hoboken, NJ, United States}
\email{eramos3@stevens.edu}

\author{Claudia He Yun}
\address{Department of Mathematics and Statistics, UiT The Arctic University of Norway, Troms{\o}, Norway}
\email{he.yun@uit.no}

\begin{document}

\begin{abstract}
Configuration spaces of graphs frequently grow factorially in complexity with the number of particles they parametrize. However, for suitable families of nested graphs $G_\bullet$ with compatible symmetric group actions, Ramos and White prove that, for fixed $k$, the rational homology of the $k$\textsuperscript{th} configuration spaces of $G_\bullet$ has multiplicity stability. In the current work, we derive the stable range and use computer algebra to determine the stable representations on homology for $k=2$ and $G_\bullet$ several families of graphs, including the complete graphs, the complete bipartite graphs on $2n$ vertices, the crown graphs on $2n$ vertices, and the complete tripartite graphs on $2n+1$ vertices. We determine the stable multiplicities for certain irreducible components in the case $k=3$ and $G_\bullet$ the complete graphs.
\end{abstract}
	
\maketitle

\section{Introduction}

Let $X$ denote a topological space, and $n \geq 1$ an integer. The \textbf{configuration space of $X$ on $n$ particles} is defined to be the space
\[
\conf_n(X) := \{(x_1,\ldots,x_n) \mid x_i \neq x_j \text{ if } i \neq j\}.
\]

Configuration spaces of manifolds and algebraic varieties have been a mainstay of algebraic topology for around a century dating back to work of Arnol'd \cite{Arn}. Notably, it has only been in the last twenty years or so that researchers have begun to consider configuration spaces whose underlying spaces are more combinatorial in nature. Specifically, contemporaneous work of Abrams \cite{Abr}, Ghrist \cite{Ghr}, and {\'S}wi{\k{a}}tkowski \cite{Swia} at the turn of the century began the study of configuration spaces of graphs.

While one might expect the low dimensionality of graphs to make the resulting configuration spaces easier to understand, it was quickly discovered that there are a number of theoretical reasons why this is not the case.
Although graph configuration spaces are always homotopy equivalent to finite cubical complexes with fixed (i.e. not varying in $n$) dimensions
for $n \gg 0$ \cite{Abr,abrams-ghrist,Swia}, their Euler characteristics grow factorially with $n$ \cite{Gal}. That implies that the Betti numbers of these spaces must grow in a factorial fashion.
This factorial growth 
can be seen in the $n$-particle configuration space of the line segment, which has $n!$ connected components. In particular, a number of well known stability phenomena for configuration spaces with growing numbers of particles cannot possibly hold in this setting outside of the most simple possible graphs such as star trees \cite{wawrykow2025homology}.

One consequence of the combinatorial explosion in homology is that there are famously very few explicit computations of the homology groups of graph configuration spaces. The most obvious way to mitigate this factorial growth is to quotient by the natural symmetric group action, which creates the \textbf{unordered configuration spaces}. Although computations are more common for these quotient spaces, the situation is still fairly sparse. For instance, while we currently have a decent grasp of what $H_1$ looks like for any graph \cite{KP}, what $H_2$ looks like for unordered configurations of planar graphs \cite{AK}, what $H_i$ looks like for trees \cite{Farl}, and the Betti numbers for a large (but finite) collection of small sporadic graphs \cite{Drum}, there is still a lot we do not understand. The question of whether these homology groups can ever admit odd torsion is still very open, and has implications to theoretical physics \cite{MS}. Section \ref{section:confbackground} contains a more detailed list of known results.

The above being said, however, it was noted in works of L\"utgehetmann \cite{Lut}, as well as the first author and White \cite{RW} that there were observable stable behaviors if one \emph{fixed} the number of points being configured. Define FI to be the category whose objects are finite sets and whose morphisms are injections. An \textbf{FI-graph} is defined to be a functor $G_\bullet$ from FI to the category of graphs and graph homomorphisms. More concretely, an FI-graph can be thought of as a collection of graphs $\{G_n\}_{n \in \mathbb{N}}$ such that for any injection of sets $f:[a] \rightarrow [b]$ there is a naturally induced homomorphism of the associated graphs $f_\ast:G_a \rightarrow G_b$.

The first example of an FI-graph is the collection of complete graphs on $n$ vertices. 
Other examples include complete bipartite graphs $K_{n,n}$, crown graphs $W_n$, i.e. $K_{n,n}$ with a perfect matching removed, and Kneser (resp. Johnson) graphs of disjoint (resp. maximally) overlapping subsets of a fixed size. Importantly, as all injections from a finite set to itself are necessarily permutations, the above definition of an FI-graph necessitates that each graph $G_n$ is acted on by the corresponding symmetric group $S_n$ (by graph homomorphisms), and that all of these actions are in a way compatible with each other due to the variety of maps being induced from the other injections. It is this ``abundance of symmetry" that allows one to conclude a number of impressive stable behaviors that manifest in various quantities related to an FI-graph. These range from standard counting invariants such as counting subgraphs \cite{RW}, to extremal invariants such as independence numbers \cite{GR}, to probabilistic quantities such as hitting numbers of random walks \cite{RW2}.

Our current work relies on the following theorem.
Note that in the statement of this theorem we will write $\Sp(\lambda)$ to denote the irreducible representation of the symmetric group $S_n$ associated to the partition $\lambda$ of $n$. If $\mu = (\mu_1,\ldots,\mu_r)$ is a partition of some other integer $m < n - \mu_1$, We also write $\mu[n]$ to denote the partition of $n$ given by $\mu[n] = (n-m,\mu_1,\mu_2,\ldots,\mu_r)$. Using Young diagrams, $\mu[n]$ is the partition obtained from $\mu$ by adding a new long first row, whose length is chosen to make the result a partition of $n$.

\begin{thm}[Ramos and White \cite{RW}, Theorem G]\label{thm: mainTech}
    Let $G_\bullet$ denote a finitely generated FI-graph (see Definition \ref{def: FIgraph}) Then for any fixed $i,m \geq 0$ there exists a finite set of partitions $\Lambda$ as well as a finite set of integers $\{m_\lambda\}_{\lambda \in \Lambda}$ such that for every $n \gg 0$,
    \[
    H_i(\conf_m(G_n);\Q) \cong \bigoplus_{\lambda \in \Lambda}m_\lambda \Sp(\lambda[n]) \label{eq: stable}
    \]
\end{thm}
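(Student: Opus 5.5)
The plan is to realize $\conf_m(G_n)$, up to $S_n$-equivariant homotopy, by a finite-dimensional cubical complex whose cells are indexed by combinatorial data living inside $G_n$. Then I will show that the resulting cellular chain groups, viewed as functors in $n$, form finitely generated FI-modules (over $\Q$), with FI-equivariant differentials. Once this is in place, the result follows from the Noetherian property of FI-modules over $\Q$ together with the representation stability theorem of Church--Ellenberg--Farb. That theorem says a finitely generated FI-module over $\Q$ has, for $n \gg 0$, a decomposition $\bigoplus_{\lambda\in\Lambda} m_\lambda \Sp(\lambda[n])$ with $\Lambda$ and the $m_\lambda$ independent of $n$.

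\textbf{Step 1 (combinatorial model).} I would use the Abrams discretized model, or better the \'Swi\k{a}tkowski-type model, after subdividing edges. For fixed $m$, $\conf_m(G)$ deformation retracts onto the cube complex $D_m(G)$ whose $k$-cells are $m$-tuples of pairwise disjoint closed cells of $G$, exactly $k$ of them edges. This requires $G$ to be sufficiently subdivided. Subdividing each edge of $G_n$ into $m+1$ pieces is a natural construction, so it is compatible with the graph maps $f_\ast$. Hence the subdivided family is again an FI-graph, finitely generated if $G_\bullet$ is. The deformation retraction is natural with respect to graph automorphisms, so the homology is computed $S_n$-equivariantly by the cellular chains $C_k(D_m(G_n);\Q)$.

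\textbf{Step 2 (finite generation).} This is the main obstacle. A cell is an $m$-tuple of vertices and edges of $G_n$, so $C_k$ is spanned by tuples of at most $m$ simplices of $G_n$. The issue is that FI-graph maps $f_\ast$ need not be injective, so they may fail to carry disjoint cells to disjoint cells. Thus $n\mapsto C_k(D_m(G_n))$ is not obviously a functor on FI. I would handle this using the structure of finitely generated FI-graphs from Definition \ref{def: FIgraph}. There, $G_n$ has vertex set $\bigsqcup_j V_j\otimes \binom{[n]}{d_j}$-style orbits and the maps are injective for $n \gg 0$; in other words, the vertex and edge functors are finitely generated FI-sets that are eventually injective. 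I would pass to the restriction to $n\ge N$, or replace the functor by the sub-FI-set of injective data. Then the set of $m$-tuples of cells is a finitely generated FI-set: each tuple is supported on a bounded subset of $[n]$, of size at most $m$ times the generation degree. So its $\Q$-linearization is a finitely generated FI-module, and the disjointness condition cuts out a sub-FI-set once maps are injective. The differentials are signed face maps, which are natural.

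\textbf{Step 3 (conclusion).} Over $\Q$, submodules and quotients of finitely generated FI-modules are finitely generated (Noetherianity). So $H_i = \ker/\im$ is a finitely generated FI-module, at least in the range $n\ge N$, which suffices after shifting. Applying the Church--Ellenberg--Farb theorem, which identifies finitely generated FI-modules over $\Q$ with uniformly representation stable sequences, gives the finite set $\Lambda$ and multiplicities $m_\lambda$ for $n\gg0$. The stable range itself comes from bounds on generation and relation degrees, which is the topic derived later in the paper.
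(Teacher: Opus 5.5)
Your proposal is correct and follows essentially the route the paper relies on. The paper obtains Theorem \ref{thm: mainTech} by combining the finite generation of $H_i(\conf_m(G_\bullet))$ from Ramos--White (Theorem \ref{Gconffg}) with the Church--Ellenberg--Farb theorem (Theorem \ref{repstabthm}), and its own treatment of the subdivided Abrams complex $D_k(G_n)$ as a complex of finitely generated FI-modules matches your Steps 1 and 2.

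Two small points need tightening. First, eventual injectivity of the transition maps, and finite generation of the edge FI-set, are not part of Definition \ref{def: FIgraph}; they are a lemma about finitely generated FI-sets with finite levels, and you should prove it. Second, to pass from $n\ge N$ you should truncate, setting the module to $0$ below $N$. Applying the shift functor $\Sigma_N$ would not work, because it only remembers the restriction of $V_{n+N}$ to $S_n$.
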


The above theorem therefore provides a natural paradigm for computing the homology groups of infinitely many configuration spaces by performing a finite computation. One simply has to use theoretical tools to determine when the isomorphism (\ref{eq: stable}) begins to hold. Such tools in the most general contexts have been proven through the FI literature (see \cite{CEF,LR,CMNR,CMNR}). In this work we specialize these general theorems to our very particular context in order to perform explicit computations. Importantly, because the above theorem is equivariant in nature, our computations reveal more than just Betti number counts, but also the deeper representation theoretic structure accounting for the action of the automorphism groups of the underlying graph.

The primary goals of this work can therefore be summarized as follows:

\begin{enumerate}
    \item To provide explicit computations of the rational homology groups of configuration spaces with fixed numbers of points in a number of infinite families of graphs including star graphs (Theorem \ref{thm: star}), complete graphs (Theorem \ref{thm: ordered complete} and Theorem \ref{thm: three particles on complete}), complete bipartite graphs (Theorem \ref{thm: ordered bipartite}), crown graphs (Theorem \ref{thm: ordered crown}), and complete tripartite graphs (Theorem \ref{thm: ordered tripartite}). Importantly, these computations account for the representation theoretic structure coming from the automorphism groups of these graph families.
    \item Illustrate a proof of concept for the usage of representation stability and the theory of FI-modules for explicit computations in a vein similar to \cite{MMPR}.
    \item Provide a number of conjectures and possible directions for future research through observations in our experimental data.
\end{enumerate}

\begin{remark}
    Computations of the homology groups of configurations of two points on the complete graph have appeared already in various forms in the literature. For instance, \cite{BF,FH} were the first to compute the Betti numbers in this case, while the much more recent \cite{GG} was able to deduce the cohomology ring structure. Our work is the first to fully describe the underlying representation theoretic structure in this case. We also cover more families of graphs than those prior works.

    More recently, computations for the homology groups of configurations of star graphs have been accomplished by Wawrykow \cite{wawrykow2025homology}
\end{remark}

In the next section we will discuss a number of possible future directions that naturally arise from this work. In that discussion we will explicitly point out a number of experimental observations that specifically rely on our access to the more finely grained equivariant description of the homology groups.

\subsection{Future directions}

As discussed in the prior section, we view this paper as more of a first step, or proof of concept, for what is possibly a larger research program in computational topology. In this section we discuss a number of future directions that we believe are interesting following the results of this work.

\subsubsection{Expanding topological reductions}

The majority of computations performed in this work are focused on the two particle case. The reason for this is two fold: By applying the Abrams model for configuration space \cite{Abr}, the number of cells in each dimension grows factorially with the number of particles and our derived bounds on when stable behaviors begin grow polynomially in the homological index and the number of particles. We do this mainly because the Abrams model does have a significant benefit of being by far the easiest to implement in actual coding.

Using the techniques of discrete Morse theory, Gonz\'ales and Gonz\'ales \cite{GG} construct what is an extremely computationally efficient cellular module for configurations of two points on complete graphs. This model was not directly applicable for our intended use as their vector field is not equivariant in the symmetric group action on $K_n$. However, the creation of symmetric discrete gradient vector fields has recently been a topic of some interest \cite{Y}.

We believe that it is very likely that by using slightly more sophisticated topological reductions and computational topology algorithms that the tools of this paper can be used to expand computations to many more particles. This would also be helpful in terms of expanding our results to work over more general rings.

\subsubsection{Machine learning assistance}

Arguably, the most significant boon to experimental mathematics has been the advent of powerful machine learning methods \cite{AIDiscover,funsearch,funsearch2,CEWW}. It is our belief that computations in the homologies of graph configuration spaces is a natural candidate for these transformative methods. For instance, can one use one of the aformeentioned architectures to find a graph whose configuration spaces have interesting torsion?

\subsubsection{Graph properties manifesting in homology}

One of the earliest great triumphs of graph configuration space theory is due to Ko and Park \cite{KP}. In that work, they prove that $H_1$ of \emph{unordered} configurations of a graph $G$ (i.e. the quotient of the usual configuration space by the permutation action on points) will contain torsion if and only if $G$ is non-planar. This of course opened the door to a large number of questions of the form ``what graph theoretic properties can the configuration space detect?" Since the work of Ko and Park, this question has primarily been answered in terms of finding various connectivity invariants manifesting in the homology groups of configuration spaces \cite{R2,ADCK2}. In particular, there have yet to be any further results that point to significant \emph{topological} invariants of the graph being detected in configuration space.

Through the computations of this work, we have come to the following question:

\begin{quest}
Does the second homology group $H_2(\conf_n(G))$ detect whether $G$ has any \textbf{intrinsic linkage} for $n \gg 0$? 
\end{quest}

Looking at the tables \ref{table: ordered 2 pts Kn},\ref{table: H2Knn as SnxS2-reps}, and \ref{table: H2Crown with SnxS2}, for complete graphs, complete bipartite graphs, and crown graphs, one might observe a pattern in $H_2$. Firstly, in all three cases there is generally a significant increase in total dimension when the family transitions from being planar to being non-planar. This will be at $K_5, K_{3,3}$ and $W_{5}$, respectively. This behavior is, to a certain extent, expected due to the work \cite{AK}. However, what came as a surprise to us is the behavior of $H_2$ after these families transition from being linklessly embeddable to intrinsically linked; at $K_6, K_{4,4},$ and $W_6$, respectively. In all cases there was not only a meaningful increase in the dimension of the homology group, but also in the diversity of irreducible representations that appear. In other words, the dimensional increase is due to more than just the same irreducible representations gaining multiplicity before their stable range, it is also due to a large number of never before seen irreducible representations manifesting at precisely the moment when the family becomes intrinsically linked. We also included some computations for the Kneser graphs $K(n,2)$, see Table \ref{table: H2Kneser}, and the complete tripartite graphs $K_{n,n,1}$, see Table \ref{table: H2tripartite}, as these families contain members of the Petersen family, which forms the minimal forbidden minors for intrinsic linkage \cite{robertson1993linkless}.  

One possible explanation for this behavior is that the presence of these intrinsically linked cycles is creating new non-trivial relations in $\pi_1$. We believe in any case that there is some hope to think that this second homology group can detect intrinsic linkage.

\subsubsection{Higher dimensional complexes}

While we only look at graph configuration spaces in this work, the technical core comes from Theorem \ref{thm: mainTech}, which is ultimately much more general than we need. In particular, \cite{RW} proves a number of similar topological stability statements for other families of combinatorially motivated spaces. It is possibly interesting to apply the paradigm discussed here to things such as clique or independence complexes of FI-graphs, or even configuration spaces thereof.

All code used in this paper can be found at \texttt{https://github.com/ClaudiaHeYun/GraphConf/}.

\section*{Acknowledgments}
The first author was supported by NSF grant DMS-2452031. We would like to thank David Speyer and Jenny Wilson for illuminating discussions and Tobias Boege for computer help.

\section{The technical necessities}
\subsection{Representation Stability and FI-modules}\label{section:repstab}

At its core, the prevailing philosophy of the present work is that under certain ideal circumstances the computation of all homology groups within an infinite family can be reduced to a computation of only finitely many of them. To describe what we mean in this paper by ``ideal circumstances," we will need to use the language of representation stability and FI-modules.

\begin{defn}
    Let $\lambda = (\lambda_1,\ldots,\lambda_r)$ denote a partition of some non-negative integer $m$. Then for any $n \geq |\lambda| + \lambda_1$, we define the \textbf{padded partition} by $\lambda[n] = (n-|\lambda|,\lambda_1,\ldots,\lambda_r)$. Pictorially, $\lambda[n]$ is the partition of $n$ obtained from $\lambda$ by adding a sufficiently long first row.
    
    Given any partition $\lambda$, there is a naturally associated irreducible representation of the symmetric group $S_m$, called the \textbf{Specht module}, which we will denote $\Sp(\lambda)$. In the case of a padded partition $\lambda[n]$, we will use the abbreviated notation $\Sp(\lambda[n]) = \Sp_n(\lambda)$.

    Let $\{V_n\}_{n \geq 0}$ be a collection of $\Q$-vector spaces, where $V_n$ carries a linear action of the symmetric group $S_n$ for each $n \geq 0$. Then we say that the collection has \textbf{multiplicity stability} if there is a \emph{finite} collection of partitions $\Lambda$, along with integers $m_\lambda \geq 0$ for each $\lambda \in \Lambda$, such that for all $n \gg 0$
    \[
    V_n \cong \bigoplus_{\lambda \in \Lambda} m_\lambda \cdot \Sp_n(\lambda)
    \]
\end{defn}

\begin{example}
    Multiplicity stability can be thought of as asserting that for all $n$ sufficiently large, the representation $V_{n+1}$ is the ``same" as the representation $V_n$, even through these are representations of different groups. One very simple but illustrative example is the permutation representation $\Q^n$. This representation decomposes into a copy of the trivial representation and a copy of the standard representation. In other words, for all $n \geq 2$,
    \[
    \Q^n \cong \Sp_n(\emptyset) \oplus \Sp_n( (1) )
    \]
\end{example}

In modern terms, the most common way that one encounters multiplicity stability is through the theory of FI-modules.

\begin{defn}
    We use FI to denote the category whose objects are finite sets, and whose morphisms are injections. An FI-module is a (covariant) functor $V$ from FI to the category of finite dimensional $\Q$-vector spaces. In more concrete terms, an FI-module is a collection of finite dimensional $\Q$-vector spaces $\{V_A\}_{A \text{ finite}}$, one for each finite set $A$, such that for any injection $A \hookrightarrow B$ there is a naturally associated linear map $V_A \rightarrow V_B$.

    We notice that within FI, one has the full subcategory whose objects are the sets $[n] = \{1,\ldots,n\}$. In fact, FI is equivalent to this subcategory. It follows from this that it suffices to define FI-modules only on the sets $[n]$. Going forward we almost exclusively think about FI-modules in this way, with a handful of exceptions (see, for instance, Remark \ref{sharpallsets}). If $V$ is an FI-module, we denote $V([n])$ by $V_n$ for simplicity.
    
    Observe that, because injections from a finite set to itself are permutations, it is implicit in this definition that $V_n$ is an $S_n$-representation for each $n$.

    We say that an FI-module $V$ is \textbf{finitely generated} if there is an integer $N$ such that for all $n \geq N$, the vector space $V_{n+1}$ is spanned by the images $\iota(V_n)$ for all injections $\iota:[n] \hookrightarrow [n+1]$. In this case we say that $V$ has \textbf{generating degree $\leq N$}
\end{defn}

\begin{remark}
    There is nothing special about FI-modules being valued in $\Q$-vector spaces. Indeed, it is more common in recent times to consider FI-modules defined over arbitrary Noetherian rings. All of the results of this background section will continue to hold in this setting, of course excluding any that specifically talk about complex symmetric group representations. Because our eventual application will all be in the characteristic 0 case, we make the decision to only think about these types of modules in this paper.
\end{remark}

One may form the category of FI-modules by taking natural transformations as arrows. It can be seen that this category is abelian, as we can define things such as direct sums, kernels, and cokernels in a point-wise fashion. In particular, it is sensible to discuss the \textbf{submodules} and \textbf{quotient modules} of a given FI-module.

The connection between FI-modules and the aforementioned concept of multiplicity stability is summarized by the following theorem.

\begin{thm}\cite{CEF} \label{repstabthm}
Let $V$ be a finitely generated FI-module. Then the collection of symmetric group representations $\{V_n\}_{n \geq 0}$ is multiplicity stable.
\end{thm}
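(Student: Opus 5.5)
The plan is to reduce to the structure theory of free FI-modules, following Church--Ellenberg--Farb. For a finite-dimensional $S_m$-representation $W$, set $M(W)_n = \ind_{S_m\times S_{n-m}}^{S_n} W \boxtimes \triv$. These are the \emph{free} (projective) FI-modules. Since $M(W)_n = \Q[\mathrm{Hom}_{FI}([m],[n])]\otimes_{\Q S_m} W$, each one is generated in degree $m$. The branching rule (Pieri's formula) computes the decomposition: for $W=\Sp(\mu)$ with $\mu\vdash m$, the module $M(\Sp(\mu))_n$ is the sum of $\Sp(\lambda)$ over all $\lambda$ obtained from $\mu$ by adding $n-m$ boxes, no two in the same column. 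For $n\ge 2m$ each such $\lambda$ has the form $\nu[n]$ with $\nu$ obtained from $\mu$ by deleting a horizontal strip. The multiplicity of each such $\nu[n]$ is one, and the set of $\nu$ that occur is independent of $n$. So free modules with finitely many generators are multiplicity stable, with stable range $n\ge 2m$.

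Next, a finitely generated $V$ with generators in degrees $\le N$ admits a surjection $P_0=\bigoplus_{m\le N} M(V_m)\twoheadrightarrow V$, built from the natural maps $M(V_m)\to V$. The key input I would invoke is the Noetherian property of FI-modules over $\Q$ (Church--Ellenberg--Farb, later Church--Ellenberg--Farb--Nagpal): submodules of finitely generated FI-modules are finitely generated. This gives a finite presentation $P_1\to P_0\to V\to 0$ with $P_1$ free and finitely generated. Then, as virtual representations, $V_n = (P_0)_n - \im(P_1\to P_0)_n$.

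The main obstacle is that a difference of multiplicity-stable sequences need not be multiplicity stable pointwise unless the image is itself controlled. To handle this I would run the argument with characters, using character polynomials. For a free module $M(W)$, the character $\chi_{M(W)_n}$ is given for all $n$ by a single polynomial in the cycle-counting functions $X_1,X_2,\dots$, of degree at most $m$. Iterating the presentation using Noetherianity gives a finite free resolution in each bounded range, and more precisely CEF show that $\chi_{V_n}$ agrees with one character polynomial $P_V$ for $n\gg0$. The character polynomials of $\Sp_n(\nu)$ for $|\nu|\le d$ are linearly independent and span the polynomials of degree $\le d$. So writing $P_V=\sum c_\nu \chi_{\Sp_n(\nu)}$ gives integers $c_\nu$, independent of $n$, with $V_n\cong\bigoplus c_\nu\Sp_n(\nu)$ for $n\gg0$. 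The $c_\nu$ are nonnegative because they are actual multiplicities once $n$ is large, and only finitely many are nonzero. This yields the finite set $\Lambda$ and the integers $m_\lambda$ in the statement.

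An alternative to character polynomials would be the shift functor. Here one uses that for $n\gg0$ the shift $\Sigma V$ decomposes as a free part plus torsion vanishing in large degrees, and then inducts on generation degree. I would try the character-polynomial route first, since it makes the independence of the multiplicities from $n$ most transparent.
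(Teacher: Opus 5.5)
Your analysis of free modules, the finite presentation $P_1\to P_0\to V\to 0$, and your last step are all correct. (That last step says a single eventual character polynomial forces $V_n\cong\bigoplus c_\nu\Sp_n(\nu)$ with $n$-independent $c_\nu\ge 0$.) The gap is the step in between, and it is where the whole content of the theorem lies. You correctly identify that the problem is controlling $\im(P_1\to P_0)$. Passing to characters does nothing to solve it, because the character of that image is exactly as unknown as its multiplicities.

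``Iterating the presentation'' does not give a finite free resolution, since finitely generated FI-modules over $\Q$ can have infinite projective dimension. For example, $\Q$ concentrated in degree $0$ has minimal resolution $\cdots\to M(\Sp(1,1))\to M(\Sp(1))\to M(\Sp(\emptyset))$, which in degree $n$ is the Koszul complex $\Lambda^\bullet\Q^n$. A resolution truncated to a bounded range gives an alternating sum whose number of terms grows with $n$, so no $n$-independent polynomial comes out of it.

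Your fallback, quoting CEF for eventual polynomiality of $\chi_{V_n}$, amounts to assuming the theorem. In characteristic $0$ that statement is equivalent to multiplicity stability: your final paragraph gives one direction, and the converse holds because $\chi_{\Sp_n(\nu)}$ is a character polynomial for $n\ge|\nu|+\nu_1$. Moreover, \cite{CEF} derive it from the same weight and stability-degree argument that proves Theorem \ref{repstabthm}.

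The missing idea is to apply to your presentation a right exact functor that is compatible with the transition maps in $n$, is eventually constant on finitely generated free modules, and detects multiplicities. CEF use coinvariants: for $a\ge 0$ let $\Phi_a(V)_n=(V_{a+n})_{S_n}$, with $S_n$ permuting the last $n$ letters. This is an $S_a$-representation, with maps $\Phi_a(V)_n\to\Phi_a(V)_{n+1}$ induced by $[a+n]\hookrightarrow[a+n+1]$.

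For $W$ an $S_m$-representation and $n\ge m$, the $S_n$-orbits of injections $[m]\hookrightarrow[a+n]$ are indexed by partial injections $[m]\to[a]$. Hence $\Phi_a(M(W))_n\to\Phi_a(M(W))_{n+1}$ is an isomorphism. The sequence $\Phi_a(P_1)\to\Phi_a(P_0)\to\Phi_a(V)\to 0$ is exact, and the first map commutes with these eventual isomorphisms. Therefore $\Phi_a(V)_n$ is independent of $n$ for $n\gg0$.

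For $\lambda\vdash a$, Frobenius reciprocity gives $\dim\mathrm{Hom}_{S_a}(\Sp(\lambda),\Phi_a(V)_n)=\dim\mathrm{Hom}_{S_{a+n}}(M(\Sp(\lambda))_{a+n},V_{a+n})$. Your Pieri computation shows $M(\Sp(\lambda))_{a+n}$ is $\Sp_{a+n}(\lambda)$ plus terms $\Sp_{a+n}(\nu)$ with $|\nu|<|\lambda|$. Induction on $|\lambda|$ then makes every multiplicity eventually constant. The surjection $P_0\to V$ shows only $|\nu|\le N$ occur, so $\Lambda$ is finite.

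The paper itself only cites \cite{CEF} for this theorem. Its discussion following Theorem \ref{shiftThm} points to the other standard route, Nagpal's theorem that $\Sigma_a V$ is free for $a\gg 0$. It is a high iterate of the shift, not $\Sigma V$, that becomes free, so your one-line shift alternative would have to be rebuilt around that statement.
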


In order to apply Theorem \ref{repstabthm}, there are two very practical concerns that we must contend with. Suppose $V$ is an FI-module. Firstly, what tools does one have to prove that $V$ is finitely generated, and secondly, for which $n$ does $V_n$ have multiplicity stability, if $V$ is finitely generated?
To answer the first question, we have the following seminal theorem. Note that if $V$ is an FI-module then a \textbf{submodule} of $V$ is an FI-module $W$ such that $W_n \subseteq V_n$ for each $n$

\begin{thm} \cite{CEF} \label{Noetherianity}
    If $V$ is a finitely generated FI-module, then all submodules and quotient modules of $V$ must also be finitely generated.
\end{thm}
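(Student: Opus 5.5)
Quotients are immediate, so the real content of Theorem \ref{Noetherianity} is the submodule statement. My plan is to reduce everything to the \emph{free} FI-modules $M(m)$ defined by $M(m)_n = \Q[\mathrm{Hom}_{FI}([m],[n])]$. By Yoneda, a map $M(m) \to V$ is the same as an element of $V_m$. So $V$ is finitely generated exactly when there is a surjection $\bigoplus_{j=1}^r M(m_j) \twoheadrightarrow V$. Indeed, the generating-degree definition above gives finitely many generators in degrees $\leq N$, since each $V_n$ is finite dimensional.

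For a quotient $V \twoheadrightarrow Q$, composing with this surjection already exhibits $Q$ as a quotient of a finitely generated free module. For a submodule $W \subseteq V$, pull $W$ back along the surjection $P=\bigoplus M(m_j) \to V$. If the preimage is finitely generated, its image $W$ is too. Submodules of a finite direct sum are handled by the exact sequence $0 \to W\cap P' \to W \to \text{(image in } P'') \to 0$ and induction on the number of summands, using that extensions of finitely generated modules are finitely generated. So it suffices to prove that each $M(m)$ is Noetherian.

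For $M(m)$ I would use a Gröbner-style argument in the spirit of Sam--Snowden, or equivalently the original character-zero argument of Church--Ellenberg--Farb. Identify a basis element of $M(m)_n$, an injection $f:[m]\to[n]$, with a word of length $n$ in the alphabet $\{0,1,\dots,m\}$. Each label $1,\dots,m$ appears exactly once and the remaining letters are $0$. Fix a monomial order on such words compatible with the order-preserving injections $[n]\hookrightarrow[n']$. Note that FI has non-order-preserving maps, so I would first pass to the subcategory OI of order-preserving injections. Restricted to OI, $M(m)$ is a finite direct sum of free OI-modules, and finite generation over OI implies finite generation over FI. Given a submodule $W$, I consider its initial-term submodule $\mathrm{in}(W)$. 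This is spanned by monomials, and divisibility among monomials is the subword relation coming from OI maps. Higman's lemma says that words over a finite alphabet are well-quasi-ordered under subword embedding, with the labels $1,\dots,m$ required to match. Hence $\mathrm{in}(W)$ has finitely many minimal monomials. The usual division argument then lifts those leading terms to finitely many generators of $W$.

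The main obstacle is the middle step: choosing an order on words that is both a well-order in each degree and compatible with every OI map. Compatibility is what makes the leading term of $\iota_*(w)$ equal to $\iota_*$ of the leading term of $w$. Compatibility is also needed to set up Higman's lemma correctly, since the nonzero letters must match exactly. I would first try lexicographic order with $0$ smallest. I would then verify compatibility under inserting $0$'s, which is all an OI map does to such words. If that fails, I would fall back on the characteristic-zero route via representation theory. In that route one bounds the ranks of the lifted multiplicities using Pieri's rule and the branching rule. One then checks that an ascending chain of submodules must stabilize, because the multiplicities of each $\Sp_n(\lambda)$ are eventually constant by Theorem \ref{repstabthm}-type arguments.
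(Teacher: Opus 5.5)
The paper gives no proof of this statement; it is quoted from \cite{CEF}, so there is nothing internal to compare with. Judged on its own, your primary route is a correct and complete proof, and the step you flag as the main obstacle is immediate.

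\textbf{The monomial order works.} Take lexicographic order, with any ordering of the alphabet. Let $u\neq v$ be words of length $n$ that first differ at position $k$, and let $\iota:[n]\to[n']$ be order-preserving. Then $\iota_*u$ and $\iota_*v$ agree at every position before $\iota(k)$: the inserted letters are $0$ in both, and the remaining positions carry $u_j=v_j$ with $j<k$. At $\iota(k)$ they differ exactly as $u$ and $v$ do at $k$. Since $\iota_*$ is injective on words, leading terms and leading coefficients are preserved, so $\mathrm{in}(W)$ is a submodule. Each $\mathrm{Hom}([m],[n])$ is finite, so the well-order condition is vacuous.

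\textbf{Higman is more than you need.} A basis word is determined by two pieces of data: the order in which the labels $1,\dots,m$ occur, which is an element of $S_m$, and the vector of gap lengths in $\mathbb{N}^{m+1}$. The $S_m$ datum is also what gives your decomposition of $M(m)|_{\mathrm{OI}}$ into $m!$ principal projectives. OI-divisibility is equality of the permutation together with componentwise $\le$ on gaps, so Dickson's lemma suffices.

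\textbf{Drop the characteristic-zero fallback.} As sketched it risks circularity. Theorem \ref{repstabthm} applies only to modules already known to be finitely generated, and in \cite{CEF} it is deduced via finite presentation, i.e.\ via Noetherianity. Moreover, knowing that each member of an ascending chain has eventually constant multiplicities does not bound where the chain stabilizes.

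\textbf{Comparison with the citation.} The paper simply invokes \cite{CEF}, where the result is stated over a field of characteristic zero. Your argument (restriction to OI, a compatible monomial order, and a well-quasi-order on monomials) is essentially the Sam--Snowden Gr\"obner method. It is self-contained and characteristic-free. Over a Noetherian coefficient ring you would track, for each monomial, the ideal of leading coefficients, and combine the well-quasi-order with the ascending chain condition in the ring. It therefore actually justifies the paper's remark that the background results persist over arbitrary Noetherian rings, which a characteristic-zero citation does not. The cost is only the OI bookkeeping, which the paper avoids by deferring to the literature.
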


This theorem is particular useful because in topological contexts, the relevant FI-module often arises as the converging term of a spectral sequence, whose $E^2$-page is comprised of obviously finitely generated FI-modules.

Our second practical concern, on computing when exactly the multiplicities stabilize, is a considerably thornier issue. In what follows, we will outline the usual steps one takes to bound this stable range. We describe the method in its greatest generality, although in specific cases the bounds can often be significantly improved.

\begin{defn}
    Let $W$ be a fixed (finite dimensional) representation of some symmetric group $S_m$. Then the \textbf{free FI-module on $W$} is the FI-module $M(W)$ defined by the following assignment,
    \[
    M(W)_n := W \otimes_{S_m} \Q[\text{Hom}([m],[n])],
    \]
    where $\Q[\text{Hom}([m],[n])]$ is the vector space whose basis is indexed by injections from $[m]$ to $[n]$. More generally, we say that an FI-module is \textbf{free} if it is isomorphic to a direct sum of modules of the form $M(W)$. In the case where $W$ is the regular representation of $S_m$, we write $M(W) = M(m)$.
\end{defn}

Free FI-modules have played an extremely prominent role in the homological algebra of FI-modules since the origins of the subject (see, for instance, \cite{LR,R,CE,CMNR,N} for discussions of a number of important functors on the category of FI-modules for which free modules are acyclic). We observe the following facts about the free FI-module $M(W)$, where $W$ is an $S_m$-representation:

\begin{itemize}
    \item The generating degree of $M(W)$ is exactly equal to $m$;
    \item the multiplicities of irreducible representations in $M(W)_n$ stabilize no later than $n = 2m$.
\end{itemize}
The first of these facts follows immediately from the relevant definitions. The second fact is a bit more subtle, but ultimately follows from the fact that $M(W)_n \cong \text{Ind}_{S_m \times S_{n-m}}^{S_n} (W \otimes \Q)$, along with the Pieri rule.

It is the case that the free modules are precisely the projectives in the homological sense, at least over a field of characteristic 0. In fact, these modules will also be injective \cite{SS}. Because of these facts, identifying free modules in theoretical contexts often comes down to proving that certain derived functors vanish. In more practical contexts, however, the main tool one has for proving a given FI-module is free is via the following technique due to Church, Ellenberg, and Farb \cite{CEF}.

\begin{defn}
    Let FI$\sharp$ denote the category whose objects are sets of the form $[n] \sqcup \{\infty\}$ and whose morphisms are functions $f:[n] \sqcup \{\infty\} \rightarrow [m] \sqcup \{\infty\}$ satisfying the following:
    \begin{itemize}
        \item $f(\infty) = \infty$;
        \item $f|_{f^{-1}([m])}$ is an injection.
    \end{itemize}
\end{defn}

In the work \cite{CEF}, Church, Ellenberg, and Farb provide a different (but equivalent) definition of the category FI$\sharp$. Those authors thought of the category FI$\sharp$ as consisting of finite sets and injections that are only defined on a subset of their domain. It is clear that the definition given above is equivalent, and we prefer it as it makes it much more obvious how to define composition of morphisms. 
Going forward, when talking about a morphism, $f$, of FI$\sharp$, we will call the set of numbers not sent to $\infty$ the \textbf{true domain} of $f$.

\begin{rmk}\label{sharpallsets}
  The above paragraph illustrates why, when working with FI$\sharp$-modules, it is often more convenient to think of FI as the category of all finite sets. This way, every morphism in FI$\sharp$ can be thought of as a morphism in FI, albeit on a different domain.
\end{rmk}

We observe that FI can be naturally found as a subcategory of FI$\sharp$ by taking all maps that are injective on their entire domain. In particular, if $V$ is a module over FI$\sharp$, it can also be considered a module over FI by restriction. This leads us to the following theorem

\begin{thm}\cite[Theorem 2.24]{CEF}\label{sharpisfree}
Let $V$ denote a finitely generated module over the category FI$\sharp$. Then its restriction to FI is a free module.
\end{thm}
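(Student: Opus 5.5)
We would follow the Church--Ellenberg--Farb argument and reconstruct the free decomposition directly from the FI$\sharp$-structure. For each $n$ and each $T \subseteq [n]$, let $e_T : [n]\sqcup\{\infty\} \to [n]\sqcup\{\infty\}$ be the idempotent morphism of FI$\sharp$ that fixes $T$ pointwise and sends the complement of $T$ to $\infty$. The idempotents $e_{[n]} = \mathrm{id}$ and $e_S e_T = e_{S\cap T}$ commute. We then define
\[
W_n := \bigcap_{T \subsetneq [n]} \ker\left(e_T : V_n \to V_n\right),
\]
the part of $V_n$ killed by every idempotent whose true domain is a proper subset of $[n]$. Since conjugating $e_T$ by a permutation gives another $e_{T'}$ with $|T'|=|T|$, the space $W_n$ is an $S_n$-subrepresentation.

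The first step is to decompose each $V_n$. Using inclusion--exclusion on the commuting idempotents, we form $\pi_T := \sum_{S\subseteq T} (-1)^{|T|-|S|} e_S$. These are orthogonal idempotents summing to the identity, and the image of $\pi_T$ is identified with $W_{|T|}$ via the FI$\sharp$ morphism that restricts $[n]$ to $T$ and reindexes $T \cong [|T|]$. This gives
\[
V_n = \bigoplus_{T\subseteq [n]} \pi_T V_n \cong \bigoplus_{k=0}^n \bigoplus_{|T|=k} W_k ,
\]
and the right-hand side is precisely $\bigoplus_k M(W_k)_n = \bigoplus_k W_k\otimes_{S_k}\Q[\mathrm{Hom}([k],[n])]$.

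The second step is to verify that this decomposition is natural for FI-maps, so that it is an isomorphism of FI-modules $\bigoplus_k M(W_k) \to V$. The map is the obvious one: for an injection $f:[k]\to[n]$ and $w \in W_k$, send $w\otimes f$ to $f_*(w)$. We need to check that $f_*(W_k) \subseteq \pi_{f([k])}V_n$. This holds because $e_T \circ f$ equals $f \circ e_{f^{-1}(T)}$ in FI$\sharp$, which lets us move idempotents across $f$.

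The third step is finiteness. Because $V$ is finitely generated, $W_k$ vanishes for $k$ beyond the generating degree, so only finitely many $M(W_k)$ occur. To see this, note that if $V_n$ is spanned by images of $V_{n-1}$, then every element factors through some $e_T$ with $|T|=n-1$ and is therefore killed by the projection onto $W_n$.

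The main obstacle is the bookkeeping in the first two steps. We must show that the $\pi_T$ behave correctly under composition in FI$\sharp$ and that the map $\bigoplus_k M(W_k)\to V$ is both injective and surjective; injectivity is where the orthogonality of the $\pi_T$ is essential. We would handle this by carefully verifying the single identity $e_T\circ f = f\circ e_{f^{-1}(T)}$, from which everything else follows formally.
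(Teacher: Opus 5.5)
Your proposal is correct and follows essentially the Church--Ellenberg--Farb argument; the paper itself cites that argument without reproducing it. The key steps are: take $W_n$ to be the joint kernel of the idempotents $e_T$ for $T\subsetneq[n]$; use the commuting idempotents and inclusion--exclusion to split $V_n=\bigoplus_{T\subseteq[n]}\pi_T V_n$ with $\pi_T V_n\cong W_{|T|}$; then the natural map $\bigoplus_k M(W_k)\to V$ is an isomorphism, and the vanishing $\pi_{[n]}e_T=0$ for $T\subsetneq[n]$ forces $W_n=0$ beyond the generating degree.
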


One of the most important results in the field is that, in some sense, all FI-modules are ``eventually" free.

\begin{defn}
    Let $\iota:FI \rightarrow FI$ denote the functor which sends $[n]$ to $[n+1]$, while sending an injection $[n] \hookrightarrow [m]$ to the injection $[n+1] \rightarrow [m+1]$ which extends it by sending $n+1$ to $m+1$. Then the \textbf{shift functor} for FI-modules is defined by
    \[
    \Sigma V := V \circ \iota.
    \]
    More generally, we write $\Sigma_a$ for the $a$-fold iterate of the shift functor.
\end{defn}

\begin{thm}\cite{N}\label{shiftThm}
    If $V$ is a finitely generated FI-module, then for all $a \gg 0$, the module $\Sigma_a V$ is free.
\end{thm}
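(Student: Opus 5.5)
The plan is to induct on the generating degree $d$ of $V$. The two tools are the natural comparison map $V \to \Sigma V$ and the homological characterisation of free modules in characteristic $0$. Any injection $[n] \hookrightarrow [n+1]$ missing $n+1$ gives a map of FI-modules $V \to \Sigma V$. Write $K(V)$ for its kernel and $\Delta V$ for its cokernel (the ``derivative''). Three elementary facts come first.
\begin{enumerate}[(i)]
\item $K(V)$ is contained in the torsion submodule $T(V)$, meaning the elements killed by some FI-map. By Theorem \ref{Noetherianity}, $T(V)$ is finitely generated, hence supported in finitely many degrees. So there is an $N$ with $T(V)_n = 0$ for $n \ge N$, and then $\Sigma_N V$ is torsion free.
\item Shifting commutes with $\Delta$ and is exact.
\item $\Delta V$ has generating degree $\le d-1$. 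This should follow from $\Sigma M(W) \cong M(W) \oplus M(\mathrm{Res}^{S_m}_{S_{m-1}} W)$ for an $S_m$-representation $W$, combined with a presentation of $V$ by free modules generated in degrees $\le d$.
\end{enumerate}
The base case $d = 0$ (or $V$ torsion) is handled by (i): a torsion module has vanishing high shifts, and a torsion-free module generated in degree $0$ is a direct sum of copies of $M(0)$.

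For the inductive step, replace $V$ by $\Sigma_N V$ using (i), so we may assume $V$ is torsion free and hence
\[
0 \to V \to \Sigma V \to \Delta V \to 0
\]
is exact. By induction, and using (ii) and (iii), there is a $b$ such that $\Sigma_b \Delta V$ is free. Shifting the sequence $b$ times gives
\[
0 \to \Sigma_b V \to \Sigma_{b+1} V \to \Sigma_b \Delta V \to 0 .
\]
Since free modules are projective (and injective) in characteristic $0$, this sequence splits, so $\Sigma_{b+1}V \cong \Sigma_b V \oplus (\text{free})$. This relation alone is circular, so the real content must come from FI-homology $H_\bullet(-) = \mathrm{Tor}^{FI}_\bullet(\Q,-)$. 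Over $\Q$, an FI-module is free exactly when $H_1$ vanishes. The goal is therefore to show $H_1(\Sigma_a V) = 0$ for $a \gg 0$.

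The main obstacle is controlling how $H_1$ behaves under shifting. I would first try to use the known degree bound: $H_1(V)$ is concentrated in degrees $\le$ some $r$ depending on generation and relation degree (finite by Theorem \ref{Noetherianity}). I would also try to establish a formula of the shape $H_i(\Sigma V) \cong \Sigma H_i(V) \oplus (\text{terms from } H_i \text{ in lower degree})$, from which the top degree of $H_1(\Sigma_a V)$ drops strictly with each shift once $V$ is torsion free. Combining this degree decrease with the split sequence above, applied via the long exact sequence in $H_\bullet$ (where $H_1$ of the free quotient vanishes), should force $H_1(\Sigma_a V) = 0$ for $a$ large. That would conclude that $\Sigma_a V$ is free.

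If the homology bookkeeping proves unwieldy, the fallback is a filtration argument. I would show that $\Sigma_a V$ admits a finite filtration whose graded pieces are of the form $M(W_j)$, and then split the filtration using projectivity of free modules. The graded pieces should come from comparing, after enough shifts, each degree-$j$ generator quotient with $M(V_j)$.
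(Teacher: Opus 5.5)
The paper gives no proof of this statement; it is quoted from \cite{N}. Your argument therefore has to stand on its own, and it has a genuine gap.

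Your steps (i)--(iii), the base case, the reduction to a torsion-free $V$ with $\Sigma_b\Delta V$ free, and the criterion ``free $\iff H_1=0$'' over $\Q$ are all fine. The gap is the step meant to finish the proof. You hope for a formula $H_i(\Sigma V)\cong\Sigma H_i(V)\oplus(\text{lower-degree terms})$ that forces $\deg H_1(\Sigma V)<\deg H_1(V)$ for torsion-free $V$, but you give no argument for it. It also cannot come from an identity of that shape. The comparison between $H_1(V)$ and $H_1(\Sigma V)$ goes through the map $j_*\colon H_1(V)\to H_1(\Sigma V)$ induced by $V\to\Sigma V$, and $j_*$ preserves degree. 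A strict drop therefore requires $j_*$ to vanish in the top degree of $H_1(V)$, and your outline says nothing about that. The analogous map on $H_0$ is injective for $V=M(W)$, because $j$ is the inclusion of the summand $M(W)$ of $\Sigma M(W)\cong M(W)\oplus M(\mathrm{Res}\,W)$. Moreover, if the strict drop held for every torsion-free module, you would need neither the induction on $d$ nor the split sequence: $\deg H_1(\Sigma_N V)$ is finite, so finitely many further shifts would already kill $H_1$. So this step is a strengthened form of the theorem, left unproved. The fallback filtration amounts to the conclusion itself, with no mechanism given.

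The missing idea is that for torsion-free $V$, freeness of $\Delta V$ already forces freeness of $V$. Your splitting trick works once you apply it to a presentation rather than to $0\to V\to\Sigma V\to\Delta V\to 0$. The argument runs as follows.
\begin{enumerate}
\item For any FI-module $U$, check directly that $H_0(\Delta U)_n\cong\mathrm{Res}^{S_{n+1}}_{S_n}H_0(U)_{n+1}$: both are $U_{n+1}$ modulo the images of \emph{all} injections $[n]\hookrightarrow[n+1]$.
\item Let $V$ be torsion-free with $\Delta V$ free, and take a minimal free cover $0\to R\to P\to V\to 0$, so that $H_0(P)\cong H_0(V)$ and $R_0=0$.
\item Since $V\to\Sigma V$ is injective, the snake lemma applied to the natural map from this sequence to its shift gives an exact sequence $0\to\Delta R\to\Delta P\to\Delta V\to 0$. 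It splits because $\Delta V$ is projective.
\item Apply $H_0$ and compare dimensions degreewise. Using step 1 and $H_0(P)\cong H_0(V)$, this gives $H_0(R)_{n+1}=0$ for all $n\ge 0$. Together with $R_0=0$ this gives $H_0(R)=0$, so $R=0$ and $V\cong P$ is free.
\end{enumerate}
More generally, $\Delta$ is right exact, sends free modules to free modules, and has $L_1\Delta V=\ker(V\to\Sigma V)$. Hence for torsion-free $V$ one gets $H_i(\Delta V)_n\cong\mathrm{Res}\,H_i(V)_{n+1}$ for all $i$.

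In your notation, $\Sigma_b V$ is torsion-free and $\Delta\Sigma_b V\cong\Sigma_b\Delta V$ is free. So $\Sigma_b V$ is free, and the induction closes without the strict-drop claim.
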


Theorem \ref{shiftThm} leads naturally into a collection of important homological invariants of finitely generated FI-modules. These invariants have appeared in various forms across the literature \cite{CEF,CEFN,CMNR,LR,R}. Our exposition will mostly follow \cite{CMNR}.

\begin{defn}
    Let $V$ be a finitely generated FI-module. Then the \textbf{local degree} of $V$ is the largest integer $l(V)$ such that $\Sigma_{l(V)}V$ is \emph{not} free. In other words, $l(V)+1$ is the smallest integer for which the shift is free. If $V$ is itself a free-module then we set $l(V) = -1$. Similarly, the \textbf{stable degree} of $V$, denoted $s(V)$, is the generating degree of $\Sigma_{l(V)+1}V$.
\end{defn}

\begin{rmk}
    Because it will be useful to us later, we make the following observation about the stable degree. The free module $M(W)$ has the property that the dimension $\dim M(W)_n$ is in agreement with a polynomial in $n$. Indeed,
    \[
    \dim M(W)_n = \binom{n}{m}\dim W
    \]
    Importantly, the degree of this polynomial is $m$, which agrees exactly with the generating degree of $M(W)$. It follows that if you can bound the dimensions $\dim V_n$ of a finitely generated FI-module by a polynomial of some degree $d$, then $s(V) \leq d$. This perspective was the original perspective taken by \cite{CMNR}.
\end{rmk}

Putting it all together, our observations earlier about when free modules begin to display multiplicity stability, as well as Theorem \ref{shiftThm}, imply the following:

\begin{thm}
    If $V$ is a finitely generated FI-module, then the collection $\{V_n\}$ begins to display multiplicity stability no later than $n = l(V)+2s(V) + 1$
\end{thm}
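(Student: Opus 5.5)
Set $l = l(V)$ and $s = s(V)$, and write $W := \Sigma_{l+1}V$. By the definition of local degree and Theorem \ref{shiftThm}, $W$ is a free FI-module, and its generating degree is $s$. A free module generated in degree $\le s$ decomposes as $W \cong \bigoplus_{j\le s} M(W_j)$, where each $W_j$ is a representation of $S_j$ with $j\le s$. Concretely, $W_n = V_{n+l+1}$ as a representation of $S_n$, regarded as the subgroup of $S_{n+l+1}$ fixing the last $l+1$ points. By the Pieri rule fact recorded above, the decomposition of $M(W_j)_n$ into $\Sp_n(\mu)$'s is stable once $n\ge 2j$. Hence the $S_n$-multiplicities of $W_n$ stabilize for $n \ge 2s$, and the partitions $\mu$ occurring satisfy $|\mu|\le s$.

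The remaining task is to transfer stability of $\operatorname{Res}^{S_{N}}_{S_{N-l-1}} V_N$, with $N = n+l+1$, back to $V_N$ as an $S_N$-representation. I would do this by induction on the size of the shift, one step at a time. This uses the branching rule: $\operatorname{Res}^{S_{N}}_{S_{N-1}}\Sp(\lambda)$ is the sum of $\Sp(\lambda')$ over all ways of removing one box from $\lambda$.

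The key lemma I would prove is the following. Suppose $\operatorname{Res}_{S_{N-1}}V_N \cong \bigoplus m_\mu \Sp_{N-1}(\mu)$ with every $\mu$ padded validly, and suppose $N-1 \ge 2|\mu|$ for all relevant $\mu$ (more precisely, $N-1\ge |\mu|+\mu_1+1$). Then the $S_N$-decomposition of $V_N$ is determined by these $m_\mu$, and it has the form $\bigoplus a_\lambda \Sp_N(\lambda)$.

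The argument is a triangularity one. Every irreducible $\Sp(\nu)$ of $S_N$ restricts to $\Sp(\nu')$'s. Once the first row of $\nu$ is strictly longer than the second, the restriction contains $\Sp_{N-1}(\bar\nu)$, where $\bar\nu$ is $\nu$ with its first row deleted, together with terms in which the tail has lost a box. Ordering the tails by size, one recovers the $a_\lambda$ from the $m_\mu$ recursively, starting from the largest tails. I also need to exclude irreducibles $\nu$ of $S_N$ whose first row is short. Such a $\nu$ would restrict to terms with first row at most $\nu_1$, contradicting the fact that all constituents are padded by a long first row. For this, each step consumes one unit of room in $N$.

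Applying the lemma $l+1$ times, starting from $n\ge 2s$ for $W$, gives stability of $V_N$ for $N \ge 2s + l + 1$. Since $N = n+l+1$, this is exactly $l(V)+2s(V)+1$. The step I expect to be the main obstacle is the uniformity claim: the recovered multiplicities $a_\lambda$ must be independent of $N$ throughout this range, and the first-row-length bookkeeping must survive each of the $l+1$ unshifting steps without losing more than one unit of range per step.

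I would try to handle this directly with the branching rule. Since the $m_\mu$ are constant in $N$ and the triangular inversion does not depend on $N$ once the padding is valid, the $a_\lambda$ are constant as well. Should the padding requirement $N\ge |\lambda|+\lambda_1$ cost extra, I would instead invoke the sharper stable range for free modules from \cite{CMNR}.
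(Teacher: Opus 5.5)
There is a genuine gap, and it is in your one-step unshifting lemma. Your outline matches the paper's, which simply combines the range $2s$ for free modules with Theorem~\ref{shiftThm} and never spells out how stability of $\Sigma_{l+1}V$ passes back to $V$. You are right that this passage needs proof, but the lemma you propose for it is false.

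Restriction from $S_N$ to $S_{N-1}$ is not injective on representation rings. Your exclusion step only rules out constituents $\nu$ with small $\nu_1$. It does not rule out constituents with $\nu_1=\nu_2$ where $\nu_1$ is still large. Every one-box removal of such a $\nu$ is a validly padded $\mu[N-1]$ with $|\mu|$ in range. Yet $\nu$ has no first-row box to remove, so your unitriangular inversion has no leading term for it.

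Concretely, take $s=1$. Both $\Sp(3,1)$ and $\Sp(2,2)\oplus\Sp(4)$ restrict to $\Sp(2,1)\oplus\Sp(3)=\Sp_3(1)\oplus\Sp_3(\emptyset)=M(1)_3$. Here $N-1=3\ge|\mu|+\mu_1+1$ for every $\mu$ that occurs, so all your hypotheses hold, yet the conclusion fails. Likewise $\Sp(2,1)$ and $\Sp(3)\oplus\Sp(1,1,1)$ both restrict to $M(1)_2$. More generally, at $N=2s+2$ one has $\operatorname{Res}\Sp(s+1,s+1)=\operatorname{Res}\sum_{k=0}^{s}(-1)^k\Sp(s+2+k,s-k)$. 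So when $\Sigma V\cong M(1)$ (so $l=0$, $s=1$, and the claimed bound is $3$), restriction data alone cannot determine $V_3$ or $V_4$.

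Here is what the branching rule actually gives. Suppose $\operatorname{Res}X_N$ is stable with tails of size $\le t$ for $N-1\ge T$. Then every constituent of $X_N$ has $\nu_1\ge N-1-t$, and $\nu_1=\nu_2$ forces $N\le 2t+2$. Hence $X_N$ is stable only for $N\ge\max(T+1,2t+3)$. The first unshift therefore costs three units rather than one, and your argument repaired this way proves stability only from $l+2s+3$. Falling back on a sharper range for free modules does not help, because the loss occurs in the unshifting, not in $\Sigma_{l+1}V$.

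To reach $l+2s+1$ you must use the FI-structure of $V$ itself, not just the $S_{N-1}$-restriction of $V_N$. One way is Nagpal's complex $0\to V\to I^0\to\cdots\to I^m\to 0$ (see \cite{N,CMNR}). Its terms $I^j$ are free in characteristic $0$ and generated in degree $\le s-j$, and the complex is exact in degrees $>l$. So $[V_n]=\sum_j(-1)^j[I^j_n]$ for $n>l$, which gives stability for $n\ge\max(l+1,2s)\le l+2s+1$.
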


As observed in \cite{CMNR}, the benefit of bounding our stable ranges with these particular homological invariants is that they behave rather nicely in spectral sequences. 

\begin{thm}\cite[Proposition 3.3]{CMNR}\label{technicalbound}
Let $f:M \rightarrow N$ be a map of finitely generated FI-modules. Then one has
\[
l(C) \leq \max\{2s(M)-2,l(M),l(N)\},
\]
where $C$ is either the kernel or cokernel of $f$.
\end{thm}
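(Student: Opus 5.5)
The plan is to argue through the shift functor. The key fact is that $\Sigma$ is exact, since kernels and cokernels of FI-modules are computed pointwise and $\Sigma$ is just precomposition with $\iota$. We will also use two standard properties of free modules in characteristic $0$:

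(a) $\Sigma M(W)$ is again free, since $\Sigma M(m) \cong M(m) \oplus M(m-1)^{\oplus m}$. Generating degree does not increase under shifting.

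(b) Free modules are both projective and injective (\cite{SS}).

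Set $a = \max\{l(M), l(N)\} + 1$. Then $\Sigma_a M$ and $\Sigma_a N$ are both free, and $\Sigma_a f : \Sigma_a M \to \Sigma_a N$ is a map between free modules. By exactness, $\Sigma_a \ker f = \ker(\Sigma_a f)$ and $\Sigma_a \coker f = \coker(\Sigma_a f)$. By the definition of local degree, it therefore suffices to show two things. First, the kernel and cokernel of a map $\phi: F \to F'$ of finitely generated free modules become free after $\max\{2 g(F) - 2, -1\} + 1$ further shifts, where $g(F)$ denotes the generating degree of $F$. Second, $g(\Sigma_a M) \le s(M)$ for the relevant $a$.

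For the second point, shifting past $l(M)+1$ keeps the module free, by (a). Its generating degree equals the degree of its dimension polynomial, and that polynomial is invariant in degree under shifting. Hence $g(\Sigma_a M) = s(M)$ for every $a \ge l(M)+1$.

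The core step is the free case. Write $F = \bigoplus M(W_i)$ with all $W_i$ in degrees $\le d$. I would factor $\phi$ as $F \twoheadrightarrow I \hookrightarrow F'$, where $I = \im \phi$. Since $I$ is a quotient of a module generated in degree $\le d$, it is generated in degree $\le d$. The relation module $\ker \phi$ is also generated in bounded degree. Here I would invoke the regularity theorem of Church–Ellenberg / Church–Miller–Nagpal–Reinhold: a module presented with generators in degree $\le d$ and relations in degree $\le r$ has local cohomology, and hence local degree, at most $d + r - 1$. Applying this to $I$ with $r \le 2d-1$ gives $l(I) \le 2d - 2$. Then we use the short exact sequences
\[
0 \to \ker\phi \to F \to I \to 0, \qquad 0 \to I \to F' \to \coker\phi \to 0 .
\]
Once $\Sigma_b I$ is free, injectivity and projectivity from (b) split both sequences after $b$ shifts. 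This shows that $\Sigma_b \ker \phi$ and $\Sigma_b \coker\phi$ are summands of free modules, hence free. So $l(\ker\phi), l(\coker\phi) \le l(I) \le 2d-2$.

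Finally, we translate back. We have $l(C) \le a - 1 + \max(2s(M) - 2 + 1, 0)$, which must be massaged into the stated maximum. The cleaner route is to apply the bound to $\Sigma_a$ without losing $a$. The point is that the local degree of $C$ is detected by local cohomology $H^0_{\mathfrak m}$ and $H^1_{\mathfrak m}$, and these commute with shifts. Thus a nonfree contribution either already lives in degree $\le \max(l(M), l(N))$, or comes from the free comparison, which is bounded by $2s(M)-2$.

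The step I expect to be the main obstacle is the relation-degree bound $r \le 2d-1$ for the image of a map of free modules, together with the local-cohomology bookkeeping that yields exactly $2s(M)-2$ rather than a cruder additive bound. For this I would lean directly on the CMNR regularity theorem instead of reproving it.
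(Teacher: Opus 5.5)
The paper gives no proof of this statement; it quotes \cite[Proposition 3.3]{CMNR}. Your argument has two genuine gaps.

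First, the reduction cannot produce a maximum. After shifting by $a=\max\{l(M),l(N)\}+1$, even a perfect free-case bound only gives $l(\Sigma_a C)\le 2s(M)-2$. That means $l(C)\le\max\{l(M),l(N)\}+2s(M)-1$, and nothing recovers the $a$ shifts you spent. Granting that local cohomology commutes with $\Sigma$ does not help, because $\Sigma_a$ lowers the top degree of every torsion module by exactly $a$.

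The proposed ``massaging'' also rests on a false premise, namely that $l$ is detected by $H^0_{\mathfrak m}$ and $H^1_{\mathfrak m}$ (recall $l(V)=\max_i\deg H^i_{\mathfrak m}(V)$). The standard-representation module $K'=\ker(M(1)\to M(0))$ has $H^0_{\mathfrak m}(K')=H^1_{\mathfrak m}(K')=0$, yet it is not free: its dimensions $0,0,1,2,3,\dots$ are not of the form $\sum_j w_j\binom{n}{j}$ with $w_j\ge 0$. Its $H^2_{\mathfrak m}$ is $\Q$ in degree $0$. In fact the $2s(M)-2$ term lives precisely in $H^{\ge 2}_{\mathfrak m}$.

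The missing idea is to bound $H^{\ge 2}_{\mathfrak m}$ intrinsically, without shifting. For $i\ge 2$, $H^i_{\mathfrak m}(V)$ is the cohomology in position $i-1$ of the derived saturation of $V$. This is a complex of free modules $F^0\to F^1\to\cdots$, each generated in degree $\le s(V)$; it is the complex of semi-induced modules used in \cite{CMNR}. The free-case bound applied to $F^{i-2}\to F^{i-1}$ then gives $\deg H^i_{\mathfrak m}(V)\le 2s(V)-2$. Apply this to $K=\ker f$ and $I=\im f$, which satisfy $s(K),s(I)\le s(M)$ by comparing dimension polynomials. The long exact sequences in $H^\ast_{\mathfrak m}$ for $0\to K\to M\to I\to 0$ and $0\to I\to N\to C\to 0$ then handle $H^0_{\mathfrak m}(K)$, $H^1_{\mathfrak m}(K)$ and all of $H^\ast_{\mathfrak m}(C)$, with only $l(M)$ and $l(N)$ entering. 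That is where the maximum comes from.

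Second, the free case fails as written. The relation bound $r\le 2d-1$ for $I=\im\phi$ is false. For the augmentation $M(1)\to M(0)$, where $d=1$, the image is $\Q$ in every positive degree. The relation identifying the two images of its degree-$1$ generator lives in degree $2=2d$. Moreover, even granting $r\le 2d-1$, the estimate $d+r-1$ gives $3d-2$, not $2d-2$.

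The fix is to present the cokernel rather than the image. In characteristic $0$, the component of $\phi$ between summands generated in degree exactly $d$ is $M(\psi)$ for an $S_d$-map $\psi\colon W\to W'$. Summands generated in lower degree map to zero there. So, after an automorphism of the target, the isomorphism $M(\psi|_U)\colon M(U)\to M(\psi(W))$ splits off, where $U$ is a complement of $\ker\psi$. The rest of the target generated in degree $\ge d$ is a free summand of the cokernel.

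What remains is a map $\phi'$ whose cokernel is generated in degree $\le d-1$ with relations in degree $\le d$. The $t_0+t_1-1$ bound then gives $l(\coker\phi')\le 2d-2$. Your splitting argument, run starting from the cokernel (projectivity suffices), transfers this bound to $\im\phi$ and $\ker\phi$.
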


Particularly relevant for us will be the case wherein our spectral sequence degenerates immediately. That is to say, the case of a chain complex.

\begin{cor}
    Let $M^\bullet$ denote a homologically graded chain complex of FI-modules. Then one has,
    \[
    l(H_i(M^\bullet)) \leq \max\{2s(M^i)-2, 2s(M^{i+1})-2,l(M^{i+1}),l(M^i),l(M^{i-1})\}.
    \]
\end{cor}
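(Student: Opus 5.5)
The plan is to derive the corollary from Theorem \ref{technicalbound} by applying it twice, once to each differential meeting position $i$. Write $d_{i}:M^{i}\to M^{i-1}$ and $d_{i+1}:M^{i+1}\to M^{i}$. Set $Z_i=\ker d_i$ and $B_i=\im d_{i+1}$. Then $B_i\subseteq Z_i\subseteq M^i$, and $H_i(M^\bullet)=Z_i/B_i$. All of these are finitely generated by Theorem \ref{Noetherianity}, since the $M^j$ are assumed finitely generated; this hypothesis is implicit in the statement.

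The first step bounds the cycles. Theorem \ref{technicalbound} applied to $d_i:M^i\to M^{i-1}$ gives
\[
l(Z_i)\le \max\{2s(M^i)-2,\,l(M^i),\,l(M^{i-1})\}.
\]

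The second step realizes homology as a cokernel. The differential $d_{i+1}$ factors through $Z_i$, because $d_i\circ d_{i+1}=0$. So we get a map $\tilde d_{i+1}:M^{i+1}\to Z_i$ with $\coker(\tilde d_{i+1})=Z_i/B_i=H_i(M^\bullet)$. Applying Theorem \ref{technicalbound} to $\tilde d_{i+1}$ yields
\[
l(H_i(M^\bullet))\le \max\{2s(M^{i+1})-2,\,l(M^{i+1}),\,l(Z_i)\}.
\]
Substituting the bound on $l(Z_i)$ from the first step produces exactly the five-term maximum in the statement.

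The only point needing care is that the source of $\tilde d_{i+1}$ is $M^{i+1}$ itself, so the stable-degree term in the second application is $s(M^{i+1})$ rather than anything involving $Z_i$. This is automatic, because the bound in Theorem \ref{technicalbound} only involves the stable degree of the source. So no estimate on $s(Z_i)$ is ever needed, and no step should pose a real obstacle; the argument is a formal two-step application of the cited proposition.
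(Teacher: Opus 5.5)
Your argument is correct, and it reaches the bound by a shorter route than the paper.

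\textbf{How the paper proceeds.} The paper also writes $H_i(M^\bullet)$ as a cokernel landing in $Z_i=\ker d_i$. However, it uses the inclusion $B_i=\im d_{i+1}\hookrightarrow Z_i$ rather than your corestricted map $\tilde d_{i+1}\colon M^{i+1}\to Z_i$. That choice forces it to control both $s(B_i)$ and $l(B_i)$:
\begin{enumerate}
\item It replaces $s(B_i)$ by $s(M^i)$, tacitly using that stable degree does not increase on passing to submodules.
\item It bounds $l(B_i)$ by writing $B_i$ as the cokernel of $\ker d_{i+1}\hookrightarrow M^{i+1}$, and then bounds $l(\ker d_{i+1})$ via the kernel case applied to $d_{i+1}$. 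These are two further applications of Theorem~\ref{technicalbound}, together with another tacit estimate $s(\ker d_{i+1})\le s(M^{i+1})$.
\end{enumerate}

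\textbf{What your factorization buys.} It sidesteps all of this. The bound in Theorem~\ref{technicalbound} only sees the stable degree of the source, and your source is $M^{i+1}$ itself. So two applications suffice, and no monotonicity property of stable degree is ever invoked. The paper's detour does produce a bound on the local degree of the boundaries $B_i$ as a by-product, but the corollary never uses it.

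\textbf{A side remark.} The paper's displayed bound for $l(\ker f^i)$ lists $l(M^{i+1})$ where $l(M^{i-1})$ is meant. Your first step has the correct target module. As you note, finite generation of the $M^j$ is an implicit hypothesis in both arguments.
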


\begin{proof}
    For this proof only, we make the following definitions
    \[
        f^i:M^i\rightarrow M^{i-1},\quad 
        g^i: \ker(f^i)\rightarrow M^i, \quad
        h^i: \im(f^{i+1}) \rightarrow \ker(f^i).
    \]

    According to Theorem \ref{technicalbound}, we have
    \begin{align*}
            l(H_i(M^\bullet)) = l(\coker(h^i)) &\leq \max\{2s(\im(f^{i+1}))-2,l(\im(f^{i+1})),l(\ker(f^i))\\
            &\leq \max\{2s(M^{i})-2,l(\im(f^{i+1})),l(\ker(f^i))\}.
    \end{align*}

    We also find, again applying Theorem \ref{technicalbound},

    \[
        l(\ker(f^{i})) \leq \max\{2s(M^i)-2,l(M^i),l(M^{i+1})\}
    \]

    As the above is true for all $i$, we may also conclude,
    \begin{align*}
        l(\im(f^{i+1})) = l(\coker(g^{i+1})) &\leq \max\{2s(\ker(f^{i+1})-2,l(\ker(f^{i+1})),l(M^{i+1})\}\\
        &\leq \max\{2s(M^{i+1})-2,l(\ker(f^{i+1})),l(M^{i+1})\} 
    \end{align*}
Our corollary now follows.

\end{proof}

To conclude this section, we take the time to record one final bound on when multiplicity stability begins. This bound, due to the originating work on FI-modules \cite{CEF} is not as general as the above homological work, as it presupposes two things. Firstly, it will only hold when one works over a field of characteristic 0. While it is the case that we are working in this context in this paper, it will unfortunately not be usable if one were to try to use our quantitative techniques to prove facts about the torsion groups appearing in the homologies of graph configuration spaces. Secondly, it is most useful in cases wherein you are computing stable ranges of homology modules coming from chain complexes entirely comprised of free modules. While this will be the case in what follows, it may not be the case in any future work that builds off what we do here.

\begin{thm} \cite[Proposition 3.3.3 and Lemma 6.3.2]{CEF} \label{relationBound}
Let
\[
\cdots \rightarrow M^2 \stackrel{f}\rightarrow M^1 \stackrel{g}\rightarrow M^0 \rightarrow \cdots
\]
be a chain complex of FI-modules over a field of characteristic 0, and assume that each of the modules $M^i$ is free. Then the homology group $\ker(g)/\im(f)$ achieves multiplicity stability no later than $n = \max\{s(M^2),s(M^1)\} + s(M^1)$. More generally, for any fixed partition $\lambda$ of some integer $m$, the multiplicity of $
\Sp(\lambda[n])$ appearing in $\ker(g)/\im(f)$ is constant for $n \geq \max\{s(M^2),s(M^1)\} + m$.
\end{thm}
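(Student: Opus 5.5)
We work degree by degree, one irreducible at a time. In characteristic $0$ every finite dimensional $S_n$-representation is semisimple, so the homology satisfies
\[
\ker(g)_n/\im(f)_n \cong \ker(g_n) \ominus \im(f_n),
\]
and $\im(f_n)\cong M^2_n \ominus \ker(f_n)$. Fix a partition $\lambda$ of $m$ and let $c_\lambda(X_n)$ denote the multiplicity of $\Sp(\lambda[n])$ in $X_n$. Then
\[
c_\lambda(H_n)=c_\lambda(\ker g_n)-c_\lambda(\im f_n),
\]
so it suffices to show that both terms on the right are constant once $n\geq \max\{s(M^2),s(M^1)\}+m$.

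\textbf{Step 1: multiplicities in the free modules.} Write each $M^i$ as a sum of modules $M(W)$ with $W$ an $S_k$-representation and $k\le s(M^i)$; for free modules the stable degree is the generating degree. Then $M(W)_n=\ind_{S_k\times S_{n-k}}^{S_n}(W\boxtimes\triv)$. By the Pieri rule, $c_\lambda(M(W)_n)$ counts the constituents $\Sp(\mu)$ of $W$ such that $\lambda[n]/\mu$ is a horizontal strip. As soon as $n-m\ge\lambda_1$ and $n\ge k+m$ (this also handles $\mu_1$ against the long first row), this count is independent of $n$. So multiplicities of $\Sp_n(\lambda)$ in $M^1_n$ and $M^2_n$ are constant for $n\ge s(M^i)+m$.

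\textbf{Step 2: ranks of the maps, the main obstacle.} It remains to show that $c_\lambda(\im f_n)$ and $c_\lambda(\im g_n)$ stabilize; then $c_\lambda(\ker g_n)=c_\lambda(M^1_n)-c_\lambda(\im g_n)$ stabilizes too.

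The plan is to use the \cite{CEF} machinery behind Proposition 3.3.3. A map of free FI-modules $M(W)\to M(W')$ is determined by $S$-equivariant data in low degree. Moreover, $\im f$ and $\im g$ are quotients of $M^2$ and $M^1$ and submodules of $M^1$ and $M^0$, so they are generated in degree $\le s(M^2)$ and $s(M^1)$ respectively. Their relations are controlled by the kernel, which is a submodule of a free module generated in degree $\le s(M^1)$ (resp.\ $s(M^2)$). Lemma 6.3.2 of \cite{CEF} says the following: if an FI-module has generators in degree $\le a$ and relations in degree $\le b$, then its $\lambda$-multiplicity is constant for $n\ge \max\{a,b\}+m$.

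I would apply this lemma to $\im f$, which is a quotient of $M^2$ and a submodule of $M^1$; its generation and relation degrees are bounded by $\max\{s(M^2),s(M^1)\}$. I would apply it likewise to $\ker g$, which is a submodule of $M^1$ presented inside free modules of degree $\le s(M^1)$. The delicate point is bounding the relation degree of the image by the stable degree of the target. For this I would use that a submodule of a free module in characteristic $0$ has relation degree controlled by that free module, using injectivity of free modules \cite{SS} so that the kernel splits off degreewise in a compatible way.

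Subtracting the two stable multiplicities gives the claim for each $\lambda$. Taking the maximum over the finitely many $\lambda$ that occur, each with $m\le s(M^1)$, yields the global bound $\max\{s(M^2),s(M^1)\}+s(M^1)$.
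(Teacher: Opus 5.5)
\textbf{There is a genuine gap in your Step 2.} The claim it rests on is false. A submodule of a free FI-module need not be generated in degree at most the generating degree of that free module. So the relation degree of $\im f$ (the generation degree of $\ker f\subseteq M^2$) is not bounded by $\max\{s(M^2),s(M^1)\}$, and the generation degree of $\ker g\subseteq M^1$ is not bounded by $s(M^1)$.

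A concrete example: take $M^2=M(1)$, $M^1=M(0)$, $M^0=0$, and let $f=\varepsilon$ be the augmentation $e_i\mapsto 1$.
\begin{itemize}
\item $\im f$ is $\Q$ in every degree $n\ge 1$ and $0$ in degree $0$, so it is generated in degree $1$.
\item Its relations $\ker\varepsilon$ are the sum-zero vectors in $\Q^n$. These vanish for $n\le 1$ and are generated by $e_1-e_2$ in degree $2$.
\item Hence the relation degree of $\im f$ is $2>\max\{s(M^2),s(M^1)\}=1$. Re-indexing so that $g=\varepsilon$, the same module shows $\ker g$ can be generated above $s(M^1)$.
\end{itemize}
Your presentation-degree criterion therefore certifies constancy of $c_\emptyset(\im f_n)$ only from $n=2$ on. The theorem asserts $n\ge 1$ for this complex, and indeed $H_n=0$ for $n\ge 1$. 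So no bookkeeping of generation and relation degrees can reach the stated range.

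Appealing to injectivity of free modules does not repair this. Degreewise splittings exist by semisimplicity, but they are not FI-compatible. Injectivity of $M^1$ also says nothing about presentations of its submodules.

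\textbf{The missing idea} is to replace presentation degrees by the stability degree of \cite{CEF}; this is the mechanism behind the cited Proposition 3.3.3 and Lemma 6.3.2.
\begin{itemize}
\item \emph{Definition.} Put $\Phi_a(V)_k=(V_{a+k})_{S_k}$, an $S_a$-representation, with transition maps $\Phi_a(V)_k\to\Phi_a(V)_{k+1}$. Say $V$ has injectivity (resp.\ surjectivity) degree $\le s$ if these maps are injective (resp.\ surjective) for all $a$ and all $k\ge s$.
\item \emph{Inheritance.} In characteristic $0$ the functor $\Phi_a$ is exact. So injectivity degree passes to submodules and surjectivity degree passes to quotients.
\item \emph{Free modules.} For $W$ an $S_k$-representation, $\Phi_a(M(W))_j$ is $W\otimes_{S_k}(-)$ applied to the span of $S_j$-orbits of injections $[k]\hookrightarrow[a+j]$. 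These orbits are exactly the partial injections from $[k]$ to $[a]$ leaving at most $j$ elements unassigned. Hence $M(W)$ has injectivity degree $0$ and surjectivity degree $\le k$.
\item \emph{The images.} $\im f$ is a quotient of $M^2$ and a submodule of $M^1$, so it has stability degree $\le s(M^2)$. Likewise $\im g$ has stability degree $\le s(M^1)$. Alternatively, a diagram chase bounds the stability degree of $\ker g/\im f$ by $\max\{s(M^2),s(M^1)\}$ directly. Only injectivity of $M^0$ is used, which is why $s(M^0)$ never enters.
\item \emph{From stability degree to multiplicities.} Suppose $V$ has stability degree $\le s$ and $\lambda\vdash m$ with $n-m\ge\lambda_1$. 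Pieri gives $[\Sp(\lambda):(V_n)_{S_{n-m}}]=\sum_\nu c_\nu(V_n)$, the sum over $\nu\subseteq\lambda$ with $\lambda/\nu$ a horizontal strip. This system is unitriangular, and the left side is constant once $n-m\ge s$. Induction on $|\lambda|$ then makes $c_\lambda(V_n)$ constant for $n\ge s+m$.
\end{itemize}
With this in place of your Step 2, your decomposition $c_\lambda(H_n)=c_\lambda(M^1_n)-c_\lambda(\im g_n)-c_\lambda(\im f_n)$ works. Together with your Step 1 and your final maximization over $|\lambda|\le s(M^1)$, it gives exactly the stated bounds.
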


\subsection{Configuration spaces of graphs}\label{section:confbackground}

\begin{defn}
Throughout this work, a \textbf{graph} will always refer to a finite CW-complex of dimension at most 1. Given a graph $G$, and an integer $n \geq 0$, the \textbf{$n$-pointed (ordered) configuration space} on $G$ is the topological space,
\[
\conf_n(G) = \{(x_1,\ldots,x_n) \in G^n \mid x_i \neq x_j\}.
\]
Relatedly, the \textbf{$n$-pointed unordered configuration space} is the quotient space
\[
\uconf_n(G) = \conf_n(G)/S_n,
\]
where the symmetric group acts properly discontinuously on $\conf_n(G)$ by permuting coordinates.
\end{defn}

Configuration spaces of higher dimensional manifolds have been a mainstay in algebraic topology since at least the work of Arnold \cite{Arn}. Configuration spaces of graphs are a much more modern concern, whose study has been in large part driven by applications to robotics \cite{abrams-ghrist,FarBook,Ghr}, as well as physics \cite{Mac,MS}. One feature that is common to much of the work on the subject in the last few years, is that it appears that explicit computations of the homology groups of these spaces is quite difficult \cite{ADCK,AK,BF,CL,Drum,Farl,FH}. To summarize some things that are currently known:

\begin{itemize}
    \item The first homology group $H_1(\uconf(G))$ is either torsion free, or otherwise can only contain 2-torsion. Whether or not this group has torsion is equivalent to whether or not $G$ is planar \cite{KP};
    \item The Betti numbers of $H_i(\uconf_n(G))$ and $H_i(\conf_n(G))$ have been computed for all $i$ and $n$, provided that $G$ is a tree \cite{CL,Farl};
    \item The Betti numbers of $H_i(\conf_2(G))$ have been computed for a number of graphs, including complete graphs and complete bipartite graphs, as well as all planar graphs \cite{BF,FH};
    \item The groups $H_2(\uconf_n(G))$ are computed in principal whenever $G$ is a planar graph \cite{AK};
    \item Betti numbers for a large collection of configuration spaces of small graphs with a small number of points have been accomplished through computer calculation in \cite{Drum}.
\end{itemize}

Notably, much of the work that has been completed in graph configuration spaces over the last few years has focused especially on the unordered case \cite{ADCK,AK,Farl,FS,R2,Swia}. In fact, when it comes to the ordered configuration spaces of graphs, very little is explicitly computed beyond the two particle case. This is due, in no small part, to the combinatorial explosion that occurs in these Betti numbers when $n$ gets large. 
For this reason, there has been a push in the literature over the last few years to understand the behavior of these spaces when the number of points is fixed, but the graph itself is allowed to vary in some nice family \cite{KR,Lut,MR,R3,RW}. This is where representation stability enters the picture.

\begin{defn}\label{def: FIgraph}
    By an \textbf{FI-graph}, we mean a functor $G_\bullet$ from FI to the category of graphs and graph homomorphisms. We say that an FI-graph is \textbf{finitely generated} if for all $n \gg 0$, and all $v \in V_{G_{n+1}}$, there exists some $v' \in V_{G_n}$ and an injection $f:[n] \hookrightarrow [n+1]$ such that $G_f(v') = v$.
\end{defn}

Most natural examples of FI-graphs that one might consider are finitely generated. For instance, complete graphs $K_n$ as well as the complete bipartite graphs $K_{n,n}$. Combinatorially, one reason such families are nice is that their many counting invariants behave quite regularly in $n$. For instance, we have the following.

\begin{thm}\cite[Theorem B]{RW}
    Let $G_\bullet$ be a finitely generated FI-graph, and let $H$ be any fixed graph. Then the number of copies of $H$ appearing as a subgraph within $G_n$ eventually agrees with a polynomial. In particular, both the number of vertices in $G_n$ as well as the number of edges in $G_n$ are in eventual agreement with polynomials.
\end{thm}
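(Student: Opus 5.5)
We plan to reduce everything to counting points of finitely generated \emph{FI-sets}, i.e.\ functors from FI to finite sets, and to use that such counts are eventually given by sums of binomial coefficients. Write $V_n$ for the vertex set of $G_n$. Functoriality makes $n \mapsto V_n$ an FI-set, and $\Q[V_\bullet]$ is an FI-module. The finite generation hypothesis in Definition \ref{def: FIgraph} says exactly that $\Q[V_\bullet]$ has finite generating degree $N$. Since $E_n \subseteq V_n \times V_n$ (unordered edges, or ordered pairs if that is more convenient) is preserved by the maps $G_f$, $\Q[E_\bullet]$ is a submodule of $\Q[V_\bullet]^{\otimes 2}$. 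A tensor product of finitely generated FI-modules is finitely generated, so Theorem \ref{Noetherianity} shows that $\Q[E_\bullet]$ is finitely generated as well.

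First step: the maps are eventually injective. For an injection $f:[n]\hookrightarrow[m]$, the kernels of $\Q[V_n]\to\Q[V_m]$ assemble into the torsion submodule of $\Q[V_\bullet]$. By Theorem \ref{Noetherianity} this submodule is finitely generated. A finitely generated torsion FI-module vanishes in all large degrees, since each generator is killed by some map and there are finitely many generators. Hence there is $N'$ such that for $n \ge N'$ every $G_f$ is injective on vertices.

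Second step: orbit decomposition. In this range, choose vertices $v_1,\dots,v_r \in V_{N'}$, one representative from each $S_{N'}$-orbit. Each vertex $v$ of $G_n$ is then the image of some $v_i$ under an injection $[N']\hookrightarrow[n]$. Its \emph{support} is the minimal subset $A\subseteq[n]$ such that $v$ is in the image of $G_\iota$ for the inclusion $\iota:A \hookrightarrow [n]$. These supports have size at most $N'$. Moreover, $V_n$ is a disjoint union of finitely many pieces of the form $\mathrm{Inj}([a_i],[n])/\Gamma_i$, with $\Gamma_i \le S_{a_i}$ the stabilizer. In particular $|V_n|=\sum_i \frac{a_i!}{|\Gamma_i|}\binom{n}{a_i}$ for $n\gg0$, which is a polynomial; this is the vertex count.

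Third step: counting copies of $H$. Let $H$ have $k$ vertices. A labelled copy of $H$ is a $k$-tuple of distinct vertices of $G_n$ with prescribed adjacencies. Such a tuple has a \emph{type}: the orbit index and $\Gamma_i$-class of each vertex, together with the overlap pattern of the $k$ supports. The union of the supports has size at most $kN'$. Therefore, for $n\ge kN'$ there are finitely many types, independent of $n$, and the number of tuples of a given type with union of size $u$ equals $\binom{n}{u}$ times a constant.

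The crux, and the step I expect to be the main obstacle, is showing that adjacency between two vertices depends only on their type once $n$ is large. I would first try to deduce this from the finite generation of $\Q[E_\bullet]$. Every edge of $G_n$ is the image of an edge in some $G_M$, where $M$ is the generating degree of $E_\bullet$. Whether two vertices with supports $A,B$ are adjacent can then be tested inside the copy of $G_{|A\cup B|}$ sitting on $A\cup B$; here injectivity from the first step is needed so that edges are not created or destroyed by collapsing. Equivariance under $S_n$ then makes adjacency a function of type. Granting this, the number of copies of $H$ is a finite sum of terms $c\binom{n}{u}$, divided by $|\mathrm{Aut}(H)|$, and hence polynomial for $n \gg 0$. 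Taking $H$ a single edge gives the statement for edges.
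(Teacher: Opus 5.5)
The paper gives no proof of its own; it only cites \cite{RW}. Your orbit-counting strategy is a reasonable, more explicit alternative to an FI-module argument, and your first step (eventual injectivity via the torsion submodule and Noetherianity) is correct. However, the crux step fails as written. Graph homomorphisms preserve adjacency but do not reflect it. So injectivity of the vertex maps does nothing to stop edges from being created, and adjacency of two vertices with supports $A,B$ cannot be tested in $G_{A\cup B}$. For example, let $G_n$ be the edgeless graph on $[n]$ for $n\le 5$ and $K_n$ for $n\ge 6$, with $G_f=f$ on vertices. This is a finitely generated FI-graph with injective vertex maps. The vertices $1,2$ (supports $\{1\},\{2\}$) are non-adjacent in $G_{\{1,2\}}$, yet adjacent in $G_n$ for every $n\ge 6$. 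The claim itself is true, for a simpler reason. Since $S_n$ acts on $G_n$ by automorphisms, the adjacent ordered pairs form a union of $S_n$-orbits (your types) in $V_n\times V_n$. Once types are identified across $n$ via the standard inclusions, the homomorphism $G_n\to G_{n+1}$ carries edge types to edge types. So the set of edge types is nondecreasing inside a fixed finite set, hence eventually constant. No use of $\Q[E_\bullet]$ is needed.

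Second, your notion of support is not well defined. As a result, the decomposition $V_n=\bigsqcup_i \mathrm{Inj}([a_i],[n])/\Gamma_i$, which is the real content of your second step, is asserted rather than proved. Take $G_n$ to be $n$ isolated vertices for $n\le 1$ and the star $X_n$ for $n\ge 2$. The center lies in the image of $G_A$ exactly when $|A|\ge 2$, so for $n\ge 3$ there is no unique minimal $A$. With representatives in $V_2$, your recipe would count the center's orbit as $\binom{n}{2}$ instead of $1$. The fix is to define supports through stabilizers, using the following lemma. For $n>2a$, every subgroup $K\le S_n$ containing $S_{[n]\setminus[a]}$ has the form $S_{[n]\setminus B}\times\Gamma$ with $B\subseteq[a]$ and $\Gamma\le S_B$. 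To see this, note that the $K$-orbit $O$ containing $[n]\setminus[a]$ has more than half its points there. Conjugating transpositions of $[n]\setminus[a]$ by elements of $K$ therefore yields transpositions joining every point of $O$ to $[n]\setminus[a]$, whence $S_O\le K$ and $B=[n]\setminus O$. Apply this (for $n>2N'$) to the images in $V_n$ of your representatives $v_i\in V_{N'}$. Eventual injectivity then shows that $B_i$ and $\Gamma_i$ do not depend on $n$ and that distinct $S_{N'}$-orbits do not merge, which gives the decomposition with $a_i=|B_i|$. The same lemma shows that the stabilizer of a $k$-tuple is $S_{[n]\setminus U}\times\Delta$ with $\Delta\le S_U$ independent of $n$. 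That is exactly what your count of $c\binom{n}{u}$ tuples per type requires.
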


What is most relevant for us is the following theorem.

\begin{thm} \cite[Theorem G]{RW} \label{Gconffg}
    Let $G_\bullet$ denote a finitely generated FI-graph. Then for each $i,n \geq 0$ the FI-modules
    \[
    H_i(\conf_n(G_\bullet)) \text{ and } H_i(\uconf_n(G_\bullet)),
    \]
    are finitely generated.
\end{thm}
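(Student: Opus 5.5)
The plan is to realize $H_i(\conf_n(G_\bullet))$ as the homology of a chain complex of FI-modules built from a combinatorial cell model, and then to apply Theorem \ref{Noetherianity}. Concretely, replace $\conf_n(G_m)$ by Abrams' discretized configuration space: subdivide each $G_m$ sufficiently (the required subdivision depends only on $n$, so it can be done uniformly and functorially in $m$). The cells of this cubical complex are $n$-tuples of pairwise disjoint closed cells (vertices or edges) of $G_m$, and it is a deformation retract of $\conf_n(G_m)$. A graph homomorphism $G_f$ induced by an injection $f$ sends vertices to vertices and edges to edges. I would restrict to FI-graphs whose induced maps are injective on vertices, or else check that the relevant maps preserve disjointness. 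Under that assumption $f$ induces a cellular map of the discretized configuration spaces, compatible with the inclusions into $\conf_n$. This gives a chain complex $C_\bullet(\conf_n(G_\bullet))$ of FI-modules whose homology is the FI-module $H_i(\conf_n(G_\bullet))$. For the unordered case, take $S_n$-coinvariants, which is exact over $\Q$.

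Next, I would show that each chain group $C_i$ is a finitely generated FI-module. By finite generation of $G_\bullet$, beyond some $N$ every vertex of $G_{m+1}$ lies in the image of some $G_m$. An edge is determined by its endpoints together with a finite amount of extra data. So, as in \cite[Theorem B]{RW}, the set of cells, namely $n$-tuples of disjoint vertices and edges, is a finitely generated FI-set: any tuple involves at most $2n$ vertices, and hence comes from a bounded stage. Its linearization is then a finitely generated FI-module. One has to take care with orientations of edges and the signs in the cubical boundary, but these are preserved by the induced maps up to a consistent sign convention.

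Finally, $\ker(\partial_i) \subseteq C_i$ is a submodule, hence finitely generated by Theorem \ref{Noetherianity}. Then $H_i$ is a quotient of this kernel, so it is also finitely generated. The same argument applies to coinvariants for $\uconf_n$, since these are quotients.

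I expect the main obstacle to be the functoriality of the cell model. An arbitrary graph homomorphism may collapse edges or identify vertices, and then it does not induce a map on configuration spaces at all. I would first try to argue that for $m \gg 0$ the structure maps of a finitely generated FI-graph are injective; this should follow from the $S_m$-equivariance and the stabilization of vertex counts. Failing that, I would work with the FI-modules only in the stable range, where this injectivity holds.
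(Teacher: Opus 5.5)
The paper does not prove this statement; it imports it from \cite{RW}. Your architecture is the standard argument: Abrams' model with a uniform subdivision, cellular chains of $D_n(G'_\bullet)$ as a complex of FI-modules (natural because the inclusion $D_n(G'_m)\hookrightarrow\conf_n(G_m)$ commutes with embeddings), Theorem \ref{Noetherianity} for kernels and quotients, and $S_n$-coinvariants for the unordered case. It is the same chain-level picture the paper uses when it bounds stable degrees.

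\textbf{The gap: finite generation of the cells.} This step does not go through as written. Your argument shows that the vertices underlying a cell are simultaneously images of vertices of some $G_k$ with $k$ bounded. It does not show that the cell is the image of a cell of $G_k$. Graph homomorphisms preserve adjacency but can create it, so the preimage of an edge need not be an edge. For instance, take $V(G_m)=[m]$ with $G_m$ edgeless for $m<6$ and complete for $m\geq 6$, and the obvious maps. This is an FI-graph with injective structure maps and vertices generated in degree $1$, yet no edge comes from a stage below $6$. Your appeal to \cite[Theorem B]{RW}, which is a counting statement, does not supply this.

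The missing ingredient is Noetherianity at the level of sets. Ordered adjacent pairs form a sub-FI-set of $V(G_\bullet)^2$. The cells of $D_n(G'_\bullet)$ form a sub-FI-set of a finite product of finitely generated FI-sets. The linearization of a sub-FI-set is a submodule, hence finitely generated. Since structure maps send basis elements to basis elements, a spanning family of images of basis elements must be the whole basis, so the sub-FI-set is itself finitely generated. Equivalently, you can realize $C_i(D_n(G'_\bullet))$ as an FI-submodule of the cellular chains of $(G'_\bullet)^n$.

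\textbf{Hypotheses you should make explicit.} The claim that ``an edge is determined by its endpoints plus a finite amount of data'' requires simple graphs, or a finitely generated edge FI-set. With the paper's CW-complex graphs and a vertex-only finiteness condition, the statement is false. Take $G_m$ to be a single vertex with loops $e_A$ for $A\subseteq[m]$ and $G_f(e_A)=e_{f(A)}$. Then $\dim H_1(\conf_1(G_m);\Q)=2^m$, which is impossible for a finitely generated FI-module.

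Likewise, since the statement speaks of FI-modules, every $G_f$ must already induce maps on configuration spaces, i.e.\ be injective (injectivity on vertices suffices for simple graphs). This is a standing hypothesis. If it fails in low degrees, your truncation to a stable range is the right reading.

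Your guess that the maps are eventually injective is correct, but the reason is Noetherianity rather than vertex counts. The torsion submodule $T\subseteq\Q V(G_\bullet)$ is finitely generated. An element killed by one injection $[a]\to[b]$ is killed by every injection $[a]\to[c]$ with $c\geq b$, since the latter factors through the former. Hence $T_m=0$ for $m\gg0$, and all structure maps out of $V(G_m)$ are injective.
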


This theorem, along with the technical work of the previous section, is the spine of the present work. Notably, Theorem \ref{Gconffg} tells us absolutely nothing about the local or stable degrees of these modules. To apply our quantitative methods, we therefore need to get a better grasp of these quantities. This understanding will ultimately fall out of the topology of the situation, which we will see shortly.

Before getting to this topological work, we note that in one particular case the stable degree has already been computed quite precisely.

\begin{thm}\cite[Theorem C]{Lut}\label{specialCase}
    Let $(G,v)$ and $(H,w)$ be pairs of a graph with a choice of vertex. For each $k \geq 0$, we define $G_k^H$ to be the graph that is formed by wedging $k$ copies of $H$ to $G$ along the specified vertices. This turns the collection $G_\bullet^H$ into a finitely generated FI-graph, where the symmetric group acts by fixing $G$ and permuting the copies of $H$. Then for any $i,n \geq 0$ the FI-modules
    \[
    H_i(\conf_n(G^H_\bullet)) \text{ and } H_i(\uconf_n(G^H_\bullet)),
    \]
    each have stable degree $\leq n+3$.
\end{thm}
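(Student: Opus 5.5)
The plan is to work with a combinatorial, $S_k$-equivariant cellular model for $\conf_n(G^H_k)$, and to bound the dimensions of the chains in each homological degree by a polynomial in $k$ of degree at most $n+3$. By the remark on stable degree, such a polynomial bound controls the stable degree, and stable degree passes to subquotients.

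First, I subdivide $G$ and $H$ so that each has at least $n+1$ vertices on every edge path. This lets Abrams' discrete model serve as a deformation retract of $\conf_n$. The subdivision is done the same way in every copy of $H$, so the $S_k$-action is preserved.

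A cell of the discretized configuration space is an ordered choice of $n$ closed cells of $G^H_k$ with pairwise disjoint closures. Any such cell meets at most $n$ copies of $H$. Because of the wedge point, it may also meet one further copy through the chosen vertex. So the cells are indexed by two pieces of data: a subset of copies of size at most $n+1$, together with a configuration supported on $G$ and those copies. The number of such cells in each degree is therefore at most $C\binom{k}{n+1}$, which is polynomial of degree $n+1$. In fact the chain FI-modules are quotients of sums of modules $M(W)$ with $W$ a representation of some $S_m$, $m\le n+1$.

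The homology $H_i$ is a subquotient of these chain modules, and stable degree is controlled by dimension growth. Following \cite{CMNR}, I bound the growth of $\dim H_i$ by the growth of the chains, which gives $s\le n+1$. The statement's weaker $n+3$ leaves slack for a coarser subdivision, for example the Świątkowski model, where cells also record half-edges at the wedge vertex and up to two extra copies may appear.

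The step I expect to be the main obstacle is justifying that the stable degree of a subquotient is bounded by the polynomial degree of the chain dimensions. Here I would invoke the dimension characterization of $s$ from \cite{CMNR}. Alongside this, I need to check that the counting of supports is genuinely uniform in $k$. For the unordered case I take $S_n$-coinvariants over $\Q$, which is exact, so the same bound holds.
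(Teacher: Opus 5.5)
Your argument is correct. With one correction it proves more than the statement asks.

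\textbf{The correction.} The wedge vertex $v$ is shared by $G$ and every copy of $H$, so a factor cell equal to $v$ belongs to no particular copy. Every other vertex or edge of the uniformly subdivided $G_k^H$ lies in $G$ or in exactly one copy. Hence every cell of $D_n(G_k^H)$ is supported on at most $n$ copies. In fact $C_i(D_n(G^H_\bullet)) \cong \bigoplus_{j \le n} M(W_{i,j})$ is free, where $W_{i,j}$ is spanned by the $i$-cells supported on exactly the copies $1,\dots,j$. So your count gives $s \le n$, not $n+1$. You should drop the ``extra copy through the wedge point'' and the speculation about a coarser model; the bound $\le n+3$ is unaffected.

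\textbf{The step you flag as the main obstacle} is routine over $\Q$. $H_i$ is finitely generated, by Noetherianity applied to the finitely generated chain modules (or by Theorem G of Ramos--White). So by the shift theorem, $\dim H_i(\conf_n(G^H_k))$ agrees for $k \gg 0$ with $\sum_j \binom{k-a}{m_j}\dim W_j$. This polynomial has positive coefficients and degree exactly $s$, and it is bounded above by $\dim C_i$. This is just the paper's remark on stable degree.

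\textbf{One thing to state explicitly:} Abrams's equivalence is natural for all FI-morphisms, not only for the $S_k$-action. The structure maps are cellular embeddings of the subdivided graphs, and $D_n \subset \conf_n$ is an inclusion. So the homology of the FI-chain complex of cellular chains really is the FI-module in question. Your treatment of the unordered case via $S_n$-coinvariants is fine.

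\textbf{Comparison of routes.} The paper gives no argument of its own for this statement; it simply quotes L\"utgehetmann. Your proof follows the method of the paper's Proposition after Abrams's theorem: bound the stable degree by the polynomial growth of the cubical chains. You sharpen its count by using that each particle's cell sees at most one copy of $H$. That sharpening matters: the generic Proposition with $d_V = d_E = 1$ only yields $2n-i$, which exceeds $n+3$ whenever $i < n-3$.

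What your route buys is a short, self-contained proof of the stronger bound $s \le n$, uniform in $i$. What it costs is that it rests on the dimension characterization of stable degree. It is therefore a characteristic-zero argument and says nothing about generators or integral/torsion phenomena. That is harmless here, since the statement is over $\Q$.
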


Importantly, one should note that in these cases the stable degree does not depend on $i$. This feature seems very much unique to this case, as we will see later.

If we assume that $G = v$ is a single vertex, and $H$ is a single edge, the resulting graphs $G^H_k$ are what are known as \textbf{star graphs}. Star graphs play a prominent role in the study of graph configuration spaces, as their presence within larger graphs has quite a bit of control on the behavior of the homology groups (see \cite{ADCK} or \cite{CL}, for instance).

Moving on from this case, it remains to develop a better understanding of the local and stable degrees in the cases of more general FI-graphs. To accomplish this, we have to lean on a number of discrete models of these spaces.

\begin{defn}
    Let $G$ be a graph, and $n \geq 0$. The topological space $G^n$ is naturally a CW complex with cellular structure induced from $G$. In particular, the cells all take the form $\sigma_1 \times \ldots \times \sigma_n$, where $\sigma_i$ is either a vertex or edge of $G$ for each $i$. With this in mind, we define the \textbf{$n$-pointed discretized configuration space of G}, $D_n(G)$, to be the subcomplex of $G^n$ formed by the cells of the form $\sigma_1 \times \ldots \times \sigma_n$, where for any $i \neq j$, the vertices contained in $\sigma_i$ and $\sigma_j$ do not overlap. We similarly define the \textbf{n-pointed unordered discretized configuration space of $G$}, $D^U_n(G)$, to be the quotient of $D_n$ by the action of $S_n$.
\end{defn}

While one would like for it to be the case for $\conf_n(G)$ and $D_n(G)$ to be homotopy equivalent to one another, this is not quite the case. For instance, $D_n(G)$ is the empty complex if the number of vertices of $G$ is much less than $n$. There is, however, a way to fix this problem at the cost of drastically increasing the number of cells.

\begin{thm}\cite{Abr}\label{AbramsModel}
    If $G$ is a graph such that,
    \begin{itemize}
        \item every path between distinct vertices in $G$ of degree not equal to 2 has at least $n-1$ edges and,
        \item every cycle within $G$ has at least $n+1$ edges,
    \end{itemize}
    then $D_n(G)$ is homotopy equivalent to $\conf_n(G)$. This equivalence is equivariant in the action of $S_n$, so that it descends to an equivalence of the unordered spaces as well.
\end{thm}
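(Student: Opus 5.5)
The plan is to show that the inclusion $D_n(G) \hookrightarrow \conf_n(G)$ is an $S_n$-equivariant homotopy equivalence. The inclusion makes sense because a cell $\sigma_1 \times \cdots \times \sigma_n$ whose closures are pairwise vertex-disjoint consists entirely of configurations with distinct coordinates. Rather than write down an explicit deformation retraction, I would pass through a combinatorial model of $\conf_n(G)$ that stratifies it finely enough for both spaces to be compared at the level of posets.

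\emph{Step 1: a combinatorial model of $\conf_n(G)$.} Stratify $\conf_n(G)$ by combinatorial type. A type records, for each particle, the open cell of $G$ (vertex or open edge) containing it, together with the linear order, along the edge, of the particles lying in the same open edge. Each stratum is a product of open simplices (ordered tuples of points in an open interval) and is therefore contractible. The closure relations make the set of types into a poset $P$: a particle in an open edge may slide onto an endpoint, provided this creates no collision. I would check that this stratification is regular enough, for instance via a nerve or Quillen fiber argument over the cover by open stars of strata, that $\conf_n(G)\simeq |P|$. The same argument gives $D_n(G) \simeq |Q|$, where $Q \subseteq P$ is the subposet of types realized inside cells of $D_n(G)$: at most one particle per open edge, and no two particles sharing or touching a common vertex. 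All of these constructions are natural in the $S_n$-action, which simply relabels particles.

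\emph{Step 2: a poset retraction $r : P \to Q$.} The goal is a monotone map $r$ with $r|_Q = \mathrm{id}$ and either $r(p) \le p$ for all $p$ or $r(p)\ge p$ for all $p$. By the standard Quillen argument, such an $r$ gives $|Q| \simeq |P|$. Define $r$ by a canonical spreading procedure: the particles in a type cluster on chains of consecutive edges. The first hypothesis says that every path between essential vertices has at least $n-1$ edges. It ensures that any cluster of at most $n$ particles lying on a chain of degree-2 vertices can be redistributed, preserving their linear order, so that they occupy distinct, non-adjacent closed cells of that same chain. The second hypothesis handles a cluster filling a cycle with no essential vertex. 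A cycle of at least $n+1$ edges leaves room to spread the particles without wrapping around, so their cyclic order is preserved and no choice of starting point is needed. The spreading rule must be chosen canonically, for example by pushing particles away from essential vertices in a fixed way determined only by the type, so that $r$ is $S_n$-equivariant.

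\emph{Main obstacle.} The hard step is making $r$ genuinely monotone. A naive spreading rule can send comparable types $p \le p'$ to incomparable types, because a particle sliding onto an essential vertex changes which chains it blocks. I would first try to avoid this by replacing a one-step retraction with a sequence of elementary collapses or discrete Morse matchings on the cube complex obtained by subdividing $G$. Each matched pair would push a single particle off a shared vertex into an adjacent free edge, with the subdivision bounds guaranteeing that a free edge always exists and that the matching is acyclic. Because the matching rule only ever refers to the geometry of the graph and never to particle labels, the $S_n$-equivariance of the resulting equivalence, and hence its descent to $D_n^U(G) \simeq \uconf_n(G)$, would come for free.
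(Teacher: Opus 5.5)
The paper gives no proof of this statement. It quotes it from Abrams~\cite{Abr}; the form with path bound $n-1$ is the sharpening due to Prue and Scrimshaw, who proved it with Forman's discrete Morse theory. So your argument has to stand on its own.

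Your Step 1 is fine. Refining the product cells of $G^n$ by the relative order of the particles inside each edge gives a regular cell structure in which the fat diagonal is a subcomplex. As the complement of a subcomplex, $\conf_n(G)$ then deformation retracts onto $\Delta(P)$. Moreover $D_n(G)$ is a subcomplex whose face poset $Q$ is a down-set of $P$.

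The gap is in Step 2, and it is more basic than the monotonicity issue you flag. For $n\ge 3$ there is no map $r\colon P\to Q$ at all with $r(p)\le p$ for every $p$, nor one with $r(p)\ge p$ for every $p$. Going up in $P$ never removes a particle from an open edge. Going down can only push the first and last particle of each open edge onto its endpoints. So suppose $p$ has three particles $a<b<c$ in one open edge $e$. Every $q\ge p$ still has $a$ and $b$ in $e$. Every $q\le p$ still has $b$ in $e$, with $a$ either in $e$ or at an endpoint of $e$. Hence no element of $Q$ is comparable with $p$. In other words, $Q_{\le p}$ and $Q_{\ge p}$ are empty, so neither Quillen's comparison lemma nor his fibre lemma applies to $Q\subseteq P$.

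Moving a particle one step along a chain is an up-step followed by a down-step in $P$. At best, then, your spreading procedure is a long zigzag of comparable, monotone, equivariant self-maps of $P$ joining $\mathrm{id}_P$ to a retraction onto $Q$. Building that zigzag coherently is the entire content of the theorem, and the proposal does not do it.

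The coherence problem is not local to a chain. A chain with $n-1$ edges has only $n-2$ interior vertices, so a cluster of $n-1$ or $n$ particles must use an essential endpoint, where it competes with particles on other chains.

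Your accounting of the hypotheses is also incomplete. The cycle bound is needed for cycles through a single essential vertex, not just for cycles with none. For example, let $G$ be a triangle with a two-edge tail. It satisfies the path bound for $n=3$, yet $\chi(D_3(G))=-6$ while $\chi(\conf_3(G))=-12$.

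Finally, the discrete Morse fallback points in the right direction but is not yet an argument. Since $\conf_n(G)$ is not a finite complex, you would need to:
\begin{enumerate}
\item prove $\conf_n(G)\simeq D_n(G')$ for a much finer subdivision $G'$ (itself a nontrivial statement);
\item observe that the subdivided $D_n(G)$ is a subcomplex of $D_n(G')$;
\item exhibit an explicit equivariant acyclic matching, with no critical cells, on the cells of $D_n(G')$ outside that subcomplex, checking with the bounds $n-1$ and $n+1$ that every such cell has a partner and that no gradient path closes up.
\end{enumerate}
None of this is specified, and it is exactly where the hypotheses do their work.
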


If $G$ is any graph, then we can always achieve the conditions of the above theorem by subdividing edges of $G$ with extra vertices. As subdivision is not seen by the configuration space, we can conclude that one may always work with discretized configuration spaces at the cost of adding more cells to $G^n$. Finally, observe that if $G$ is a simple graph and $n = 2$, then subdivision of this kind is never necessary.

The presence of the discretized configuration spaces allows us to immediately place bounds on our stable degrees.

\begin{prop}
    Let $G_\bullet$ denote a finitely generated FI-graph, and assume that $|V_{G_n}|$ is in eventual agreement with a polynomial of degree $d_V$, while $|E_{G_n}|$ is in eventual agreement with a polynomial of degree $d_E$. Then for any $k \geq 2$ and $i \leq k$ the stable degrees of the FI modules
    \[
    H_i(\conf_k(G_\bullet)) \text{ and } H_i(\uconf_k(G_\bullet)),
    \]
    is at most $i \cdot d_E + (k-i)(d_V + d_E)$. If $k = 2$, then this case be improved to $i \cdot d_E + (k-i)d_V$
\end{prop}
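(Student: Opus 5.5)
The strategy is to bound the dimension of $H_i$ by the dimension of the $i$-th chain group of a discretized model, and then invoke the earlier remark that polynomial growth of degree $d$ in $\dim V_n$ forces $s(V) \leq d$.

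\textbf{Step 1: the model.} Fix $k$. Replace each $G_n$ by the subdivision $G_n'$ obtained by subdividing every edge into $k+1$ pieces. This satisfies the hypotheses of Theorem \ref{AbramsModel}, and the subdivision is compatible with the FI-graph structure. By Theorem \ref{AbramsModel}, $D_k(G'_n) \simeq \conf_k(G_n)$ equivariantly, so $H_i(\conf_k(G_n);\Q)$ is a subquotient of the cellular chain group $C_i(D_k(G'_n))$. Moreover, since the equivalence is natural, these chain groups themselves assemble into FI-modules. For $k=2$ and $G_n$ simple, no subdivision is needed.

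\textbf{Step 2: counting cells.} An $i$-cell of $D_k(G'_n)$ is a product $\sigma_1\times\cdots\times\sigma_k$ in which exactly $i$ of the factors are edges and $k-i$ are vertices. The edge count is easy: $G'_n$ has $(k+1)|E_{G_n}|$ edges, which is $O(n^{d_E})$. The vertex count $|V_{G_n}| + k|E_{G_n}|$ is $O(n^{d_V} + n^{d_E})$, which is at most $O(n^{d_V+d_E})$ as long as both degrees are at least $0$ (this is the crude bound used in the statement). So the number of $i$-cells is at most
\[
\binom{k}{i}\, O(n^{d_E})^{i}\, O(n^{d_V+d_E})^{k-i} \;=\; O\bigl(n^{\,i d_E + (k-i)(d_V+d_E)}\bigr).
\]
Since $\dim H_i \leq \dim C_i$, the dimension of $H_i(\conf_k(G_n))$ is bounded by a polynomial of this degree for all large $n$. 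Theorem \ref{Gconffg} guarantees the module is finitely generated, and the remark on stable degree then gives the stated bound.

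\textbf{Step 3: the unordered case and $k=2$.} In the unordered case, $H_i(\uconf_k)$ is a quotient of $H_i(\conf_k)$ over $\Q$ (it equals the $S_k$-coinvariants), so its dimension obeys the same bound. For $k=2$ there is no subdivision, so vertices number $O(n^{d_V})$ and edges $O(n^{d_E})$. This improves the count to $O(n^{i d_E + (2-i)d_V})$.

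\textbf{Main obstacle.} The delicate point is Step 3's appeal to the remark that polynomial dimension bounds imply stable-degree bounds. That remark concerns eventual dimension growth, but it was derived for free modules, so it must be justified for general finitely generated FI-modules. I would argue as follows. $\Sigma_{l+1}V$ is free, say $\bigoplus M(W_j)$, with generating degree $s(V)$ equal to the largest $m_j$ occurring. Then $\dim V_{n+l+1} = \sum_j \binom{n}{m_j}\dim W_j$, a polynomial of degree exactly $s(V)$. Comparing degrees with the cell bound finishes the proof.
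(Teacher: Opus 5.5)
Your proposal is correct and follows the same route as the paper. Both arguments reduce the unordered case to $S_k$-coinvariants, pass to the subdivided Abrams model $D_k$, bound $\dim H_i$ by the number of cubical $i$-cells, and note that for $k=2$ no subdivision is needed when the graphs are simple. Your derivation of ``polynomial growth of $\dim V_n$ implies $s(V)\le d$'' from the shift theorem fills in a step the paper leaves to its remark on stable degree.
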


\begin{proof}
    The homology groups of $\uconf_k(G)$ are precisely the $S_k$-coinvariants of the homology groups of $\conf_k(G)$. Therefore, it suffices to prove the proposition for the ordered space $\conf_k(G)$. In this case we may use the fact that $\conf_k(G)$ is homotopy equivalent to $D_k(G)$, so long as $G$ is sufficiently subdivided, which is a cubical complex. The cubical $i$-chains of this complex grow no faster than polynomials of the necessary degree. If $k = 2$, then subdivision of the graph is unnecessary, improving the bounds in the stated way.
\end{proof}

\begin{rmk}\label{rmk: H0 is triv}
    The bound of the above proposition is not always sharp. For instance, because these spaces are connected so long as the graph has at least one vertex of degree at least 3, $H_0$ has stable degree 0. Moreover, the stable degrees given in Theorem \ref{specialCase} will generally be better in the case of $G_\bullet^H$.
\end{rmk}

In this work, we will be proving theorems - using the homological techniques of Section \ref{section:repstab}, the topological facts of the present Section \ref{section:confbackground}, and computer algebra - about the homology groups of $\conf_k(G)$ and $\uconf_k(G)$ for a number of families of graphs and $k \leq 3$. 

The cases of configuration spaces with only two points were treated in much detail by Barnett, Farber, and Hanbury in the works \cite{BF,FH}. In that work, the three authors provide formulas for the Betti numbers of ordered configuration spaces of a number of graphs, including the complete graph and the complete bipartite graphs.  We record one particularly important theorem from these works now.

\begin{thm}\cite[Propositions 21 and 22]{FH}\label{thm: H1 of conf2 of Knn}
Let $G_n$ denote either the complete graph on $n$ vertices, or the complete bipartite graph $K_{n,n}$. Then the natural inclusion 
\[
\conf_2(G_n) \hookrightarrow G_n \times G_n
\]
induces an isomorphism of $S_2 \times S_n$-representations,
\[
H_1(\conf_2(G_n)) \cong H_1(G_n \times G_n) \cong H_1(G_n) \oplus H_1(G_n),
\]
Provided that $n \geq 5$ if $G_n = K_n$, or $n \geq 3$ if $G_n = K_{n,n}$.
\end{thm}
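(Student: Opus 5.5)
The inclusion $\iota:\conf_2(G_n)\hookrightarrow G_n\times G_n$ is $S_2\times S_n$-equivariant, since both groups act compatibly on source and target. It therefore suffices to show that $\iota_*$ is an isomorphism of vector spaces on $H_1$; equivariance then upgrades this to an isomorphism of representations. The second isomorphism $H_1(G_n\times G_n)\cong H_1(G_n)\oplus H_1(G_n)$ is Künneth, using that $G_n$ is connected so $H_0=\Q$. Under it, $S_2$ swaps the two summands and $S_n$ acts diagonally.

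I will work with the discretized model $D_2(G_n)$. Because $G_n$ is simple and $k=2$, no subdivision is needed, and Theorem \ref{AbramsModel} gives $D_2(G_n)\simeq\conf_2(G_n)$. This is a subcomplex of the product cell structure on $G_n\times G_n$. The complement consists of three kinds of cells: the diagonal vertices $(v,v)$, the cells $v\times e$ and $e\times v$ with $v\in e$, and the cells $e\times e'$ with $e\cap e'\neq\emptyset$.

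Surjectivity of $\iota_*$ comes first. $H_1(G\times G)$ is generated by classes $c\times\{v\}$ and $\{v\}\times c$ for cycles $c$ in $G$. Each such class can be realised inside $D_2$ by taking $c$ to be a cycle avoiding some vertex $v$, with $v$ fixed. This works because in $K_n$ ($n\ge4$) and in $K_{n,n}$ ($n\ge3$), every cycle in $H_1(G)$ is a sum of cycles missing a chosen vertex; for $K_n$ one uses triangles and for $K_{n,n}$ one uses $4$-cycles.

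Injectivity is the main obstacle: the kernel of $H_1(D_2)\to H_1(G\times G)$ must vanish. I would argue through $H_2$, via the long exact sequence of the pair $(G\times G, D_2)$. Injectivity on $H_1$ is equivalent to surjectivity of $H_2(G\times G)\to H_2(G\times G,D_2)$. By excision the relative group is the cellular homology of the quotient complex of missing cells, which can be computed explicitly. Alternatively one can compare Euler characteristics: use the known Betti numbers of \cite{BF,FH}, or compute $\chi(D_2)$ directly by counting cells, and combine this with $H_0=\Q$ and an explicit count of $b_2(D_2)$.

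The cleanest route is to show that every $1$-cycle of $D_2$ is homologous in $D_2$ to a combination of the product-type cycles $c\times\{v\}$ and $\{v\}\times c$. Consider an edge path where both particles move. Each square $e\times e'$ with $e,e'$ disjoint lies in $D_2$, so one particle's move can be commuted past the other's whenever the edges are disjoint. The abundance of disjoint edges for $n\ge5$ in $K_n$, or $n\ge3$ in $K_{n,n}$, lets any loop be straightened to "move particle 1 along a loop, then particle 2 along a loop" via such squares. After that, the map to $H_1(G)\oplus H_1(G)$ is visibly injective.

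The thresholds enter exactly here. For $K_4$ and for $K_{2,2}$ (a cycle) there are not enough disjoint edges, and extra classes appear: for the cycle, $\conf_2$ of a circle has $b_1=1$ while $H_1\oplus H_1$ has dimension $2$. I expect verifying the straightening argument carefully for small $n$, especially $K_5$, to be the delicate step.
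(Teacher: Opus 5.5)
The paper gives no proof of this statement; it is quoted from \cite[Propositions 21 and 22]{FH}, so your argument has to stand on its own. The reduction to $D_2(G_n)$, the equivariance and K\"unneth bookkeeping, and the surjectivity argument are fine. The gap is injectivity. The relative-homology and Euler-characteristic routes are only named, and the latter needs $b_2(D_2)$, which is as hard as the original problem. In your ``cleanest route'' the last step is false as stated. Write $G-v$ for $G$ with $v$ and its incident edges deleted. A straightened class has the form $\sum_v[c_v\times\{v\}]+\sum_w[\{w\}\times d_w]$, with $c_v$ a cycle in $G-v$ and \emph{several} parking vertices. Its image is $(\sum_v[c_v],\sum_w[d_w])$, and nothing you say forces $\sum_v[c_v\times\{v\}]=0$ in $H_1(D_2)$ when $\sum_v c_v=0$ in $H_1(G)$. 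This genuinely fails for $K_4$. Let $T_v$ be the boundary of the face of the tetrahedron $[1234]$ opposite $v$, so $\sum_v(-1)^vT_v=0$, and put $z=\sum_v(-1)^vT_v\times\{v\}$. Then $\iota_*[z]=0$, but $z$ is not a boundary in $D_2(K_4)$. Every edge $e'$ of $K_4$ is disjoint from exactly one edge $e$, so the boundary of any nonzero $2$-chain $\sum c_{e,e'}\,e\times e'$ contains nonzero terms $c_{e,e'}\,\partial e\times e'$ in which the second particle moves; $z$ has no such terms. The product classes surject onto the $6$-dimensional $H_1(K_4)^{\oplus 2}$ and also contain $[z]\neq 0$, while $b_1(D_2(K_4))=7$. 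Hence they span $H_1(D_2(K_4))$, and the failure at $K_4$ is a failure of \emph{relations}, not of straightening, contrary to your diagnosis of where the threshold enters.

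The missing idea is to show that $\Phi\colon\bigoplus_v H_1(G-v)\to H_1(D_2)$, $(c_v)\mapsto\sum_v[c_v\times\{v\}]$, factors through $H_1(G)$, and likewise for the second particle. Two inputs give this.
(a) If $c$ avoids both endpoints of an edge $vv'$, the cylinder $c\times vv'$ lies in $D_2$ and gives $[c\times\{v\}]=[c\times\{v'\}]$. So the parking vertex can be moved through the complement of $c$.
(b) The relations among the standard generators of $H_1(G)$ each involve at most five vertices: boundaries of tetrahedra among triangles for $K_n$, and $K_{2,3}$/$K_{3,2}$ relations among $4$-cycles for $K_{n,n}$. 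Such a relation already holds inside a single $G-v$ once some vertex $v$ lies outside it.
Condition (b) requires $n\ge 5$ for $K_n$ and $n\ge 3$ for $K_{n,n}$; this, not a shortage of disjoint edges, is where the thresholds come from. Then $\iota_*\circ\bar\Phi$ is the K\"unneth isomorphism, and straightening (surjectivity of $\bar\Phi$) finishes the proof. Straightening can itself be made rigorous by Mayer--Vietoris on the $1$-skeleton $\bigcup_v(G-v)\times\{v\}\cup\bigcup_w\{w\}\times(G-w)$. This reduces it to checking that the $4$-cycles contributed by the squares span $H_1$ of the bipartite graph with one edge per intersection point of these pieces.

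Alternatively, within this paper you can bypass injectivity: combine your surjectivity with $\dim H_1(\conf_2(G_n))=2b_1(G_n)$. The paper obtains this independently from the computed $H_2$, the Euler characteristic and the stable-range bounds (Theorems \ref{thm: ordered complete} and \ref{thm: ordered bipartite}, and the $n=3$ row of Table \ref{table: H2Knn as SnxS2-reps}). Quoting the Betti numbers of \cite{BF,FH} instead would be essentially circular, since for these graphs they are derived from this isomorphism and the Euler characteristic.
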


Our Theorems \ref{thm: ordered complete} and \ref{thm: ordered bipartite} can be deduced from the above theorem and knowing the $S_n$-equivariant Euler characteristics, but we use computer algebra for a uniform treatment as the other families of graphs.

\section{The case of two particles}

In this section we provide explicit computations of the rational homology groups of ordered and unordered configuration spaces on two points for a number of FI-graphs. We drop the coefficient $\Q$ in the notations. The families we consider are the star or star tree graphs with $n$ leaves, the complete graphs on $n$ vertices, the complete bipartite graphs on $n+n$ vertices, the crown graphs on $n+n$ vertices, the complete tripartite graphs on $n+n+1$ vertices, and the Kneser graphs $K(n,2)$. For each family, the analysis will follow the same steps:
\begin{enumerate}
    \item computing the stability bounds,
    \item deriving theoretically the $S_n$-representations afforded by the chain modules of the discretized configuration spaces in both ordered and unordered cases,
    \item computing using software the $S_n$-equivariant degree-2 homology groups in both ordered and unordered cases, and
    \item analyzing generators of selected subrepresentations in the stable second homology in the ordered configuration space and giving them combinatorial interpretations.
\end{enumerate}

\subsection{The star graph}
\paragraph{\textit{Computing the bounds}}
For any finite set $A$, let $X_A$ be the star graph. That is to say, the tree with a single internal vertex $v_0$ connected to leaves labeled in $A$. Then $\{X_A\}$ is a family of $S_A$-graphs where $S_A$ permutes the leaves. In particular, $X_n = G^H_n$, where $G$ is a single vertex and $H$ is an edge.

In this case, the discretized configuration space $D_2(X_A)$ is 1-dimensional. The 0-cells are in bijection with pairs of the form $i \times j$, with $i \neq j \in A \cup \{0\}$ where the symmetric group acts in the usual way on elements in $A$ and acts trivially on $0$. In particular, if we write $C_0$ for the module of 0-cells, there is an isomorphism
\[
M(2) \oplus M(1) \oplus M(1) \cong C_0.
\]
This correspondence sends an injection $f:[2] \rightarrow A$, representing a basis vector of $M(2)_A$, to the 0-cell $f(1) \times f(2)$. It will send an injection $g:[1] \rightarrow A$, representing a basis vector of $M(1)_A$ to either the zero cell $g(1) \times 0$ or $0 \times g(1)$, depending on which copy of $M(1)$ it originated from. In particular, $s(C_0)=  2$

Moving on the the 1-cells $C_1$, we claim that this module is also free. To see this we use Theorem \ref{sharpisfree}. Let $f:[n] \sqcup \{\infty\} \rightarrow [m] \sqcup \{\infty\}$ be a morphism in FI$\sharp$ with true domain $A \subseteq [n]$, and let $c = \sigma_1 \times \sigma_2$ be a 1-cell. Looking at all of the non-zero numbers that appear in the cells $\sigma_i$, if they are all contained in the true domain of $f$, then $f$ will act on this element in the same way that the map induced by
\[
f|_A: X_A \rightarrow X_m
\]
acts on the corresponding 1-chains. If there are non-zero numbers in $c$ outside of $A$, then $f$ will act as the zero map. As with the 0-chains, we can say $s(C_1) = 2$

\begin{rmk}
    We will see in future sections that actions by FI$\sharp$ arise in similar fashion for a lot of families of FI-graphs. The key point here is that given a subset $A \subseteq [n]$, the vertices of $X_n$ contained in $A$ form an induced subgraph that is isomorphic to $X_{A}$. Similar behaviors will be seen in all of the FI-graph families we consider.
    
    We therefore take the time here to point out that while the chain groups often times end up being free modules, it is almost never the case that this FI$\sharp$ structure commutes with the differential of the chain complex. Indeed, this is because a chain can contain an edge whose one end point is in the true domain of the FI$\sharp$ morphism, while the other is not. The definition of our action will send this chain to 0, whereas its image under the differential will not be sent to zero under this action. In other words, while the chains of our complex are free, we do not in general expect the homology groups to be free.
\end{rmk}

\begin{thm} \label{starBound}
Let $X_n$ denote the star graph on $n$-leaves. Then the FI-modules $H_1(\conf_2(X_n))$ and $H_1(\uconf_2(X_n))$ are multiplicity stable no later than $n = 4$.
\end{thm}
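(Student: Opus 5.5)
The plan is to apply Theorem \ref{relationBound} to the cellular chain complex of the discretized configuration space, since for two points on a simple graph no subdivision is needed and Theorem \ref{AbramsModel} applies. The star graph $X_n$ is simple, so $D_2(X_n)\simeq \conf_2(X_n)$, and this equivalence is $S_2\times S_n$-equivariant. Because $D_2(X_n)$ is $1$-dimensional, the relevant complex is
\[
0 \rightarrow C_1 \stackrel{g}\rightarrow C_0 \rightarrow 0,
\]
so $H_1(\conf_2(X_n)) = \ker(g)$. I first check that the cellular boundary map is natural in injections $[n]\hookrightarrow[m]$, since the induced maps $X_n \to X_m$ are cellular embeddings. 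This makes the complex one of FI-modules, and its homology the FI-module $H_1(\conf_2(X_\bullet))$.

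Next I pin down the stable degrees of the chain modules. As computed above, $C_0 \cong M(2)\oplus M(1)\oplus M(1)$ is free with $s(C_0)=2$. The module $C_1$ carries an FI$\sharp$-structure, so by Theorem \ref{sharpisfree} it is free. To confirm $s(C_1)=2$ I describe the $1$-cells directly. A $1$-cell is an edge $\{0,i\}$ paired with a vertex $j\neq 0, i$, in either order. Each such cell involves exactly two labels $i\neq j$ of $[n]$, so
\[
C_1 \cong M(2)\oplus M(2).
\]
This is free, generated in degree $2$, and has stable degree $2$.

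Now apply Theorem \ref{relationBound} with $M^2 = 0$, $M^1 = C_1$, $M^0 = C_0$. The kernel of $g$ stabilizes no later than
\[
n=\max\{s(M^2),s(M^1)\}+s(M^1) = 2+2 = 4.
\]
Here I would double-check that the theorem still applies when $M^2=0$; its stable degree can be taken to be $0$ or $-\infty$, and either way the maximum is $2$.

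For the unordered case, the same argument runs on the $S_2$-coinvariant complex $C_\bullet\otimes_{\Q S_2}\Q$. Coinvariants of the chain modules are again free FI-modules:
\[
C_1^{S_2}\cong M(2), \qquad C_0^{S_2}\cong M(\triv_{S_2})\oplus M(1).
\]
Their stable degrees are at most $2$, so the same bound $n=4$ follows. Alternatively, rational homology of the quotient is the coinvariants of $H_1(\conf_2)$, and taking coinvariants preserves stabilization. The main obstacle I expect is verifying that the free-module identifications are compatible with the FI-structure, rather than only holding dimensionwise. This should follow from the explicit bijection between cells and injections, as done for $C_0$.
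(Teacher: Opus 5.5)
Your proposal is correct and follows the paper's route. Both chain modules of $D_2(X_n)$ are free via the FI$\sharp$-structure with $s(C_0)=s(C_1)=2$, and Theorem~\ref{relationBound} with $M^2=0$ gives $2+2=4$; your explicit $C_1\cong M(2)\oplus M(2)$ and the same bound applied to the free coinvariant complex ($C_1\otimes_{\Q S_2}\Q\cong M(2)$, $C_0\otimes_{\Q S_2}\Q\cong M((2))\oplus M(1)$) fill in the unordered case, which the paper leaves implicit. Drop the ``alternatively'' remark, though: passing to a direct summand such as $S_2$-coinvariants preserves eventual stability but not in general a specific stable range, so it would not by itself give $n\geq 4$.
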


\paragraph{\textit{Computing the stable homology}}

\begin{thm}\label{thm: star}
For star graphs $X_n$, we have the following isomorphisms of $S_n$-representations:
\begin{enumerate}
    \item $H_1(\conf_2(X_n)) \cong \mathrm{Sp}_n(1,1) \oplus \mathrm{Sp}_n(2)$ for $n\geq 4$.
    \item $H_1(\uconf_2(X_n)) \cong \mathrm{Sp}_n(1,1)$ for $n\geq 4$.
\end{enumerate}
\end{thm}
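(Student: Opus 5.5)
The plan is to compute $H_1$ directly from the discretized model, using the equivariant Euler characteristic in the representation ring of $S_n$; no spectral sequence is needed. Since $X_n$ is a simple graph and we configure only two points, the remark after Theorem \ref{AbramsModel} says no subdivision is needed. So $\conf_2(X_n)\simeq D_2(X_n)$, $S_n\times S_2$-equivariantly, and $D_2(X_n)$ is a $1$-dimensional cell complex. For $n\geq 3$ the graph has a vertex of degree at least $3$, so $D_2(X_n)$ is connected and $H_0$ is the trivial representation $\Sp_n(\emptyset)$ (Remark \ref{rmk: H0 is triv}). Because the complex is $1$-dimensional, we get the identity
\[
[H_1] = [C_1]-[C_0]+[H_0]
\]
in the representation ring of $S_n$, and it remains to identify $C_0$ and $C_1$ as representations.

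\textbf{Chain modules.} As recorded above, $C_0\cong M(2)_n\oplus M(1)_n\oplus M(1)_n$: the $0$-cells are the pairs $i\times j$ with $i\ne j\in[n]\cup\{0\}$. A $1$-cell is $e_i\times j$ or $j\times e_i$, where $e_i$ is the edge from $v_0$ to leaf $i$. The second factor must avoid both endpoints $0$ and $i$, so $j\in[n]\setminus\{i\}$. Each family of such cells is a copy of the permutation module on ordered pairs, hence $C_1\cong M(2)_n^{\oplus 2}$. The relevant permutation modules decompose via Pieri's rule, valid once $n\ge 4$ so that all padded partitions make sense:
\[
M(1)_n\cong \Sp_n(\emptyset)\oplus\Sp_n(1),\qquad
M(2)_n\cong \Sp_n(\emptyset)\oplus 2\Sp_n(1)\oplus\Sp_n(2)\oplus\Sp_n(1,1).
\]
Substituting gives $[H_1]=[M(2)_n]-2[M(1)_n]+[\Sp_n(\emptyset)]=\Sp_n(2)\oplus\Sp_n(1,1)$, which is part (1).

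\textbf{Unordered case.} Over $\Q$, $H_*(\uconf_2(X_n))$ is the $S_2$-coinvariants of $H_*(\conf_2(X_n))$. Equivalently, it is computed by the quotient complex $D^U_2(X_n)$, since $S_2$ acts freely on cells. The swap preserves the orientation of each $1$-cell, because only one factor is an edge, so there are no sign issues. The unordered $0$-cells are the unordered pairs in $[n]$ together with the pairs $\{0,i\}$, giving
\[
C_0^U\cong (\Sp_n(\emptyset)\oplus\Sp_n(1)\oplus\Sp_n(2))\oplus(\Sp_n(\emptyset)\oplus\Sp_n(1)).
\]
The unordered $1$-cells are the pairs (edge $e_i$, leaf $j\neq i$), so $C_1^U\cong M(2)_n$. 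The same Euler characteristic computation yields $H_1(\uconf_2(X_n))\cong\Sp_n(1,1)$, which is part (2).

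\textbf{Main obstacle.} The only delicate points are bookkeeping. The first is checking that the listed cells are exactly those allowed by the definition of $D_2$, in particular the condition that the second point avoids both endpoints of the edge. The second is making sure the range $n\geq 4$ is precisely where the Pieri decompositions above hold, so that the formulas are valid for all $n\geq4$ and not only eventually. This also matches the bound of Theorem \ref{starBound}, which serves as a consistency check, though the direct computation does not need it.
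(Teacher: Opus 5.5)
Your proof is correct, and it differs from the paper's in how it obtains the range $n\geq 4$.

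The paper first invokes Theorem \ref{starBound}: multiplicity stability holds no later than $n=4$, by the freeness of $C_0$ and $C_1$ with $s(C_0)=s(C_1)=2$ and Theorem \ref{relationBound}. It then only needs the $S_4$-decompositions of $C_0$ and $C_1$ and the Euler characteristic at the single value $n=4$.

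You instead decompose $C_0\cong M(2)\oplus 2M(1)$ and $C_1\cong 2M(2)$, and their unordered analogues, for arbitrary $n$ via Pieri's rule. The equivariant Euler characteristic then gives the answer for every $n\geq 4$ at once, with no representation-stability input. Specializing your decompositions to $n=4$ recovers exactly the paper's $S_4$ decompositions.

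Your route is more elementary and self-contained; the paper itself remarks after the proof that stability is not really needed here. Your checks are the right ones: the cell condition that the second point avoids both endpoints of the edge, connectivity for $H_0$, and the absence of orientation signs under the swap. Your identification of $C_1$ with permutation modules also implicitly uses that $S_n$ introduces no signs, which holds because it fixes the center and so preserves edge orientations.

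The paper's route is chosen to illustrate the paradigm used in the later sections. Once the discretized complex has $2$-cells (complete, complete bipartite, crown graphs), the Euler characteristic no longer separates $H_1$ from $H_2$. A rank must then be computed by machine, which is feasible only at finitely many $n$. That is where the stable-range bound becomes indispensable, and where a general-$n$ Pieri computation alone would not suffice.
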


\begin{proof}
By Theorem \ref{starBound}, it suffices to compute $H_1(\conf_2(X_n))$ and $H_1(\uconf_2(X_n))$ for $n=4$. We first compute the $S_4$-representations afforded by the cellular chain modules $C_0$ and $C_1$ of $D_2(X_4)$. They are
\begin{align*}
    C_0 &\cong (2,1,1) \oplus (2,2) \oplus 4(3,1) \oplus 3(4) \\
    C_1 &\cong 2(2,1,1) \oplus 2(2,2) \oplus 4(3,1) \oplus 2(4).
\end{align*}
By Remark \ref{rmk: H0 is triv}, the zeroth homology $H_0(\conf_2(X_n)) \cong \Q$ is the trivial representation. The Euler characteristic $\chi(\conf_2(X_n))$, as a virtual $S_4$-representation over $\Q$, satisfies \[\chi = \sum_{i\geq0}(-1)^i C_i \cong \sum_{i\geq0}(-1)^i H_i.\] By Euler characteristic, we obtain $H_1(\conf_2(X_4)) \cong (2,1,1) \oplus (2,2)$. 

The unordered case is analogous. The chain modules of $D^U_2(X_4)$ are 
\begin{align*}
    C_0 &\cong (2,2) \oplus 2(3,1) \oplus 2(4) \\
    C_1 &\cong (2,1,1) \oplus (2,2) \oplus 2(3,1) \oplus (4).
\end{align*}
\end{proof}

Note that it is possible to arrive at Theorem \ref{thm: star} without knowledge of multiplicity stability; however, knowing the stable range reduces the proof to computing an example.

\subsection{The complete graph}

\paragraph{\textit{Computing the bounds}}

Write $V_{K_n}=[n]$ and ${ij}$ for the edge between vertices $i$ and $j$. We assume edges are oriented from $i$ to $j$ for $i<j$. Just as with the star graphs, we consider the chains of the discretized configuration space $D_2(K_n)$. Then $C_2$ is generated by cells of the form ${ij}\times{kl}$ for distinct $i,j,k,l\in[n]$, subject to the relations ${ij}\times{kl} = -{ji}\times{kl} = -{ij}\times{lk}$. The chain module $C_1$ is generated by cells of the form ${ij}\times k$ and $k \times {ij}$ for distinct $i,j,k$, subject to the relation ${ij}\times k = -{ji}\times k$ and $k\times {ij}= -k\times {ji}$. The chain module $C_0$ is generated by cells of the form $i\times j$ for distinct $i,j$. Similar to the case of star graphs, these chain modules are always free. To see this, we may impose an FI$\sharp$-structure on these groups by having morphisms act in the natural way on any chain that only contains numbers from their true domain, and as 0 otherwise. Using Theorem \ref{relationBound}, along with the fact that $s(C_2) = 4, s(C_1) = 3, s(C_0) = 2$, we obtain the following.

\begin{thm} \label{completeBound_2pts}
Let $K_n$ denote the complete graph on $n$ vertices. Then the FI-modules $H_1(\conf_2(K_n);\Q)$ and $H_2(\conf_2(K_n))$ are multiplicity stable no later than $n = 7$ and $n = 8$, respectively. The same holds for $\uconf_2(K_n)$.
\end{thm}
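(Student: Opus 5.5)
We apply Theorem \ref{relationBound} to the cellular chain complex of $D_2(K_n)$, which computes the homology of $\conf_2(K_n)$ with no subdivision. Theorem \ref{AbramsModel} gives this directly for $n=2$ and simple graphs, as noted after that theorem. The complex is
\[
0 \rightarrow C_2 \rightarrow C_1 \rightarrow C_0 \rightarrow 0,
\]
and all three chain modules are free. Freeness follows from Theorem \ref{sharpisfree} via the FI$\sharp$-structure described above. A morphism acts naturally on a cell whose vertex labels lie in its true domain and kills it otherwise. Composition is compatible because a composite's true domain is exactly the set of labels surviving both maps. Restricting to FI recovers the usual FI-action induced by $K_A \rightarrow K_B$.

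The first substantive step is the stable degrees. Up to orientation sign, a cell $ij\times kl$ is determined by an injection $[4]\rightarrow[n]$, so $C_2 \cong M(W_2)$ for a suitable sign-twisted representation $W_2$ of $S_4$. Hence $s(C_2)=4$, consistent with $\dim C_2$ being a degree-$4$ polynomial. Similarly $C_1 \cong M(W_1)\oplus M(W_1')$ with $W_1, W_1'$ representations of $S_3$, one for $ij\times k$ and one for $k\times ij$, so $s(C_1)=3$. Finally $C_0 \cong M(2)$, so $s(C_0)=2$. For free modules the stable degree is the generating degree, since the local degree is $-1$ and no shift is needed.

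For $H_1$ we take $M^2=C_2$, $M^1=C_1$, $M^0=C_0$ in Theorem \ref{relationBound}, giving a bound of $\max\{s(C_2),s(C_1)\}+s(C_1)=4+3=7$. For $H_2$ we take $M^2=0$, $M^1=C_2$, $M^0=C_1$, so $H_2=\ker(C_2\rightarrow C_1)$. This gives $\max\{0,4\}+4=8$.

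For the unordered case, $H_i(\uconf_2(K_n);\Q)$ is the $S_2$-coinvariants of $H_i(\conf_2(K_n);\Q)$, because $S_2$ acts freely and we work over $\Q$. The $S_2$-action commutes with the $S_n$-action. Taking coinvariants is an exact functor in characteristic $0$, and it preserves isomorphisms of $S_n\times S_2$-representations. Multiplicity stability of $\{H_i\}$ as $S_n$-representations would not by itself pass to coinvariants. So we instead run the identical argument on the chain complex of $D^U_2(K_n)$, whose chain modules are $S_2$-coinvariants of free FI$\sharp$-modules. These remain FI$\sharp$-modules, hence free, with stable degrees at most $4,3,2$. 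The same numerical bounds $7$ and $8$ follow.

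The main obstacle is justifying that the FI$\sharp$-structure is genuinely functorial, including the orientation signs, so that Theorem \ref{sharpisfree} applies. Once that holds, the rest is bookkeeping. The differential need not respect the FI$\sharp$-structure, and Theorem \ref{relationBound} only requires each $M^i$ to be free as an FI-module.
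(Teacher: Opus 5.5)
Your proposal is correct and follows the paper's argument. The paper makes the chain modules of $D_2(K_n)$ free via the FI$\sharp$-structure, uses $s(C_2)=4$, $s(C_1)=3$, $s(C_0)=2$, and applies Theorem \ref{relationBound} to get $4+3=7$ for $H_1$ and $\max\{0,4\}+4=8$ for $H_2$. Your handling of the unordered case through the chain complex of $D^U_2(K_n)$ also matches the paper: there the chain modules are $M((2,1,1))$, $M(\ind_{S_2}^{S_3}(1,1))$ and $M((2))$, which have the same stable degrees.
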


We now give an example that demonstrates the steps of the analysis before moving on to discuss the general case.

\begin{example}[The complete graph $K_4$]
    First, we compute the cellular chain modules of $D_2(K_4)$ as representations:
    \begin{align*}
        C_2 &= (1, 1, 1, 1) \oplus (2, 1, 1) \oplus (2, 2)\\
        C_1 &= 2(1, 1, 1, 1) \oplus 4(2, 1, 1) \oplus 2(2, 2) \oplus 2(3, 1)\\
        C_0 &= (2, 1, 1) \oplus (2, 2) \oplus 2(3, 1) \oplus (4)
    \end{align*}
    The boundary of a 2-cell ${ij}\times {kl}$ is given as below
    \begin{align*}
        \partial({ij}\times {kl}) &= \partial({ij})\times {kl}-{ij}\times \partial({kl})\\
        &= i\times {kl}-j\times {kl}-{ij}\times k+{ij}\times l.
    \end{align*}
    Since $\dim C_2 = 6$ and $\dim C_1 = 24$, we can represent the differential $d_1: C_2 \to C_1$ as a $24 \times 6$ matrix. Using software, we determine that $d_1$ has full rank. Therefore $H_2 = \ker d_1 \cong 0$. Again, $H_0 \cong \Q$ is the trivial representation. 
    We deduce $H_1$ through the Euler characteristic:
    \begin{align*}
        H_0-H_1+H_2 &= C_0-C_1+C_2\\
        H_1 &= (1,1,1,1) \oplus 2(2,1,1)
    \end{align*}
\end{example}

\paragraph{\textit{$S_n$-equivariant chain modules}} We now give a general description for the chain modules of $D_2(K_n)$ as $S_n$-representations.

\begin{lem}
For $n\geq 4$, the $S_n$-equivariant chain modules of $D_2(K_n)$ are
\begin{align*}
C_2(D_2(K_n)) &\cong M(\ind_{S_2\times S_2}^{S_4} (1,1) \otimes (1,1)) \\
C_1(D_2(K_n)) &\cong 2M(\ind_{S_2}^{S_3} (1,1)) \\
C_0(D_2(K_n)) &\cong M(2) 
\end{align*}
\end{lem}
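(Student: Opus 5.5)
The plan is to identify each chain module, as an FI-module, with a direct sum of induced representations. The basic tool is the description $M(W)_n \cong \ind_{S_m\times S_{n-m}}^{S_n}(W\boxtimes \triv)$, together with the fact that a permutation module on a set of cells, twisted by orientation signs, is induced from the stabilizer of one cell acting by its sign character. Concretely, for each $i$ I decompose the set of oriented $i$-cells of $D_2(K_n)$ into $S_n$-orbits. I then record the stabilizer of a representative, and the character by which that stabilizer acts on the chosen generator.

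For $C_0$ the cells are the pairs $i\times j$ with $i\neq j$. These are exactly the injections $[2]\to[n]$, and $S_n$ acts on them freely and transitively on the left. Hence $C_0 \cong \Q[\mathrm{Hom}([2],[n])] = M(\Q S_2)=M(2)$, since $\Q S_2\otimes_{S_2}\Q[\mathrm{Hom}([2],[n])]\cong \Q[\mathrm{Hom}([2],[n])]$. This identification is compatible with all injections, so it is an isomorphism of FI-modules.

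For $C_1$ there are two orbits, the cells ${ij}\times k$ and the cells $k\times {ij}$. Take the first. A cell is determined by the $3$-element set $\{i,j,k\}$ together with which element is the vertex. So sending an injection $f:[3]\to[n]$ to ${f(1)f(2)}\times f(3)$ gives a surjection $\Q[\mathrm{Hom}([3],[n])]\to C_1'$. The transposition $(12)\in S_3$ acting on the source sends this generator to its negative. Therefore the map factors through $\Q[\mathrm{Hom}([3],[n])]\otimes_{\Q S_3}\ind_{S_2}^{S_3}(1,1) = M(\ind_{S_2}^{S_3}(1,1))$, with $S_2=\langle(12)\rangle$ acting by the sign. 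To see it is an isomorphism, I compare dimensions: both sides have dimension $n(n-1)(n-2)/2$. The second orbit is handled identically, which gives $C_1\cong 2M(\ind_{S_2}^{S_3}(1,1))$.

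For $C_2$ I send $f:[4]\to[n]$ to ${f(1)f(2)}\times{f(3)f(4)}$. The subgroup $S_2\times S_2=\langle(12),(34)\rangle\subseteq S_4$ acts on this generator by the character $(1,1)\otimes(1,1)$, because of the relations ${ij}\times{kl}=-{ji}\times{kl}=-{ij}\times{lk}$. I do not include the swap $(13)(24)$ in the stabilizer, because the two factors are ordered (first particle versus second), so ${ij}\times{kl}\neq {kl}\times{ij}$. Again the map factors through $M(\ind_{S_2\times S_2}^{S_4}(1,1)\otimes(1,1))$, and both sides have dimension $n(n-1)(n-2)(n-3)/4$. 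The hypothesis $n\ge 4$ is only needed so that the cells in question exist, and the case of small $n$ is consistent anyway since both sides vanish.

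The main point needing care is checking that these isomorphisms are natural in FI, and not merely $S_n$-equivariant. This holds because an injection $[n]\to[m]$ acts on cells by relabeling vertices, which is exactly postcomposition on $\mathrm{Hom}([k],[n])$. A secondary point is getting the orientation characters right; note that in $C_2$ the two signs are independent. As a consistency check, I will verify the $n=4$ case against the example above: for instance, $M(\ind(1,1)\otimes(1,1))_4 = \ind_{S_2\times S_2}^{S_4}(1,1)\otimes(1,1)$, which should equal $(1,1,1,1)\oplus(2,1,1)\oplus(2,2)$ (dimension 6).
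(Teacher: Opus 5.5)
Your proposal is correct and follows essentially the same route as the paper. Both identify each chain module as a module induced from the stabilizer of a representative oriented cell. The paper computes the $S_4$-representation on the cells supported on $[4]$, notes that $S_{\{5,\dots,n\}}$ acts trivially, and then induces; this is exactly your $\ind_{S_2\times S_2}^{S_4}(1,1)\otimes(1,1)$ description. Your version is more explicit: the surjection from $\Q[\mathrm{Hom}([k],[n])]$, together with the dimension count $\binom{n}{k}\dim W$, also verifies naturality in FI, which the paper leaves implicit.
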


\begin{proof}
Recall that $C_2$ is generated by cells of the form ${ij}\times{kl}$ for distinct $i,j,k,l\in[n]$, subject to the relations ${ij}\times{kl} = -{ji}\times{kl} = -{ij}\times{lk}$. The submodule generated by cells for $n=4$ is \[\ind_{S_2\times S_2}^{S_4}(1,1)\otimes (1,1) = (1,1,1,1)\oplus (2,1,1)\oplus (2,2)\] as an $S_4$-representation. When $n>4$, the action from $S_{\{5,\dots,n\}}$ on this submodule is trivial, so $C_2$ has description \[\ind_{S_4\times S_{n-4}}^{S_n}(\ind_{S_2\times S_2}^{S_4}(1,1)\otimes (1,1))\otimes \Q,\] which, in our notation, is exactly $M(\ind_{S_2\times S_2}^{S_4}(1,1)\otimes (1,1))$. Similarly, we deduce the $S_n$-representation structures of $C_1$ and $C_0$ as induced representations.
\end{proof}

\begin{lem}
The representations afforded by chain modules of $D^U_2(K_n)$ are
\begin{align*}
    C_2(D^U_2(K_n)) &\cong M((2,1,1)) \\
    C_1(D^U_2(K_n)) &\cong M(\ind_{S_2}^{S_3}(1,1))\\
    C_0(D^U_2(K_n)) &\cong M((2)).
\end{align*}   
\end{lem}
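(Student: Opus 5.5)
The plan is to obtain the unordered chain modules from the ordered ones of the previous lemma. The first step is to realize $D^U_2(K_n)$ as the quotient of $D_2(K_n)$ by the coordinate swap $\tau$. Since $\tau$ acts freely and cellularly on $D_2(K_n)$ (no cell $\sigma_1\times\sigma_2$ is sent to itself, because $\sigma_1$ and $\sigma_2$ are disjoint), over $\Q$ the cellular chains of the quotient are the $S_2$-coinvariants of the ordered chains. Because the $S_2$-action commutes with the $S_n$-action, we have
\[
C_i(D^U_2(K_n)) \cong C_i(D_2(K_n))_{S_2}
\]
as $S_n$-representations. Moreover, the FI$\sharp$-structure described for $D_2(K_n)$ commutes with $\tau$ and so descends to the coinvariants. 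By Theorem \ref{sharpisfree} each $C_i(D^U_2(K_n))$ is therefore free. It then suffices to identify its generating representation, which is the piece spanned by cells using exactly the minimal number of vertices $m$, namely $m=2,3,4$ for $i=0,1,2$.

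For $C_0$ and $C_1$ I will argue directly from the cell structure.
\begin{itemize}
\item The $0$-cells of the quotient are the unordered pairs $\{i,j\}$, with no orientation. The generator at $n=2$ is the trivial representation of $S_2$, so $C_0 \cong M((2))$.
\item The $1$-cells are the classes of $ij\times k$; under $\tau$ the cells $k\times ij$ are identified with them, and $\tau$ carries no sign here because one factor is $0$-dimensional. So the two copies of $M(\ind_{S_2}^{S_3}(1,1))$ in the ordered case merge into one. Concretely, at $n=3$ the module is spanned by $12\times 3$ and its translates. The stabilizer of the underlying unordered configuration is $S_{\{1,2\}}$, which acts by the sign through edge reversal. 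This gives $\ind_{S_2}^{S_3}(1,1)$, and hence $C_1 \cong M(\ind_{S_2}^{S_3}(1,1))$.
\end{itemize}

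For $C_2$ the generating piece is the $S_2$-coinvariants of the $6$-dimensional $S_4$-representation
\[
\ind_{S_2\times S_2}^{S_4}(1,1)\otimes(1,1) = (1,1,1,1)\oplus(2,1,1)\oplus(2,2).
\]
Since $\tau$ has no fixed cells, the $6$ oriented cells $ij\times kl$ (up to sign) form $3$ free $\tau$-orbits, one for each perfect matching of $[4]$. The coinvariants are thus a $3$-dimensional $S_4$-quotient of this representation. The only summand of dimension $3$ is $(2,1,1)$; the others have dimensions $1$ and $2$, and $1+2=3$ as well. To rule out $(1,1,1,1)\oplus(2,2)$, I will compute the character of one element, say a transposition $(12)$, on the quotient.

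The main obstacle is exactly this sign bookkeeping, since swapping two $1$-dimensional factors contributes a sign, $\tau(e\times f) = -f\times e$. With that sign, $(12)$ fixes the matching class $\{12,34\}$ up to the sign $-1$ from reversing the edge $12$, and permutes the other two classes. So its trace is $-1$. This agrees with the character value $\chi_{(2,1,1)}((12)) = -1$, whereas $\chi_{(1,1,1,1)}((12)) + \chi_{(2,2)}((12)) = -1 + 0 = -1$ as well. Since both candidates give the same trace on a transposition, I will instead check the double transposition $(13)(24)$. It maps $12\times 34$ to $34\times 12 = -\tau(12\times 34)$, which equals $-(12\times34)$ in the coinvariants. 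It also swaps the other two matchings, up to orientation signs. Its trace is therefore $-1$, which matches $\chi_{(2,1,1)}$. By contrast, the value for $(1,1,1,1)\oplus(2,2)$ is $1+2=3$. This forces the generator to be $(2,1,1)$, and so $C_2 \cong M((2,1,1))$.
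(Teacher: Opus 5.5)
Your strategy is sound, but it is packaged differently from the paper. The paper writes down generators and relations for the unordered chains directly ($ij\times kl=-ji\times kl=-kl\times ij$, $ij\times k=k\times ij$, $i\times j=j\times i$). It simply asserts that the $n=4$ piece of $C_2$ is $(2,1,1)$ and induces up as in the ordered lemma. You instead derive those relations as $S_2$-coinvariants of the ordered cellular chains; the sign $\tau_*(e\times f)=-f\times e$ is the source of $ij\times kl=-kl\times ij$. You then obtain freeness by letting the FI$\sharp$-structure descend, and you justify the identification of the generating $S_4$-representation by a character check, which the paper leaves unexplained. Your route gives a self-contained proof; the paper's is shorter but rests on an unverified computation.

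Your character check, however, contains a false step. The double transposition $(13)(24)$ lies in the Klein four-group $\{e,(12)(34),(13)(24),(14)(23)\}$. This group is exactly the kernel of the action of $S_4$ on the three perfect matchings of $[4]$. So $(13)(24)$ does not swap the classes of $13\times 24$ and $14\times 23$; it fixes all three classes, each up to sign. The correct bookkeeping in the coinvariants is
\begin{align*}
12\times 34 &\mapsto 34\times 12 = -\,12\times 34,\\
13\times 24 &\mapsto 31\times 42 = 13\times 24,\\
14\times 23 &\mapsto 32\times 41 = 23\times 14 = -\,14\times 23,
\end{align*}
so the trace is $-1+1-1=-1$. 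The value you state is therefore right, and it still rules out $(1,1,1,1)\oplus(2,2)$, whose character value there is $3$, so $C_2\cong M((2,1,1))$ follows. As written, though, the number comes from an incorrect orbit description, and you should replace that sentence with the computation above. Alternatively, use the $4$-cycle $(1234)$. It genuinely swaps the classes of $12\times 34$ and $14\times 23$ and fixes $13\times 24$ with sign $+1$, giving trace $1$ against $-1$ for $(1,1,1,1)\oplus(2,2)$.
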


\begin{proof}
Using the same notation as in the ordered case, the module $C_2$ is generated by cells of the form ${ij}\times{kl}$ for distinct $i,j,k,l\in [n]$, subject to the relations ${ij}\times {kl} = -{ji}\times {kl}=-{kl}\times {ij}$. When $n=4$, we have $C_2 \cong (2,1,1)$. The module $C_1$ is generated by cells of the forms ${ij}\times k$ and $k\times {ij}$ for distinct $i,j,k\in[n]$, subject to the relations ${ij}\times k = -{ji}\times k$ and ${ij}\times k = k\times {ij}$. The chain module $C_0$ is generated by cells $i \times j$ for distinct $i,j\in [n]$, with relation $i \times j=j\times i$.
\end{proof}

\paragraph{\textit{Stable homology groups}}
\begin{thm}\label{thm: ordered complete}
    We have isomorphisms of $S_n$-representations,
    \begin{align*}
        &H_2(\conf_2(K_n);\Q)\cong\mathrm{Sp}_n(1,1,1,1) \oplus \mathrm{Sp}_n(2,1,1) \oplus \mathrm{Sp}_n(2,2),\quad n\geq 6\\
        &H_1(\conf_2(K_n);\Q)\cong2\mathrm{Sp}_n(1,1),\quad n\geq 5.
    \end{align*}
\end{thm}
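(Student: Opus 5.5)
The plan is to combine the stable range of Theorem \ref{completeBound_2pts} with an explicit computation at a single value of $n$, and then to push the answer down to the smaller thresholds claimed in the statement. For $H_1$ the simplest route is Theorem \ref{thm: H1 of conf2 of Knn}: for $n\geq 5$ it gives $H_1(\conf_2(K_n))\cong H_1(K_n)\oplus H_1(K_n)$ as $S_n$-representations. I will identify $H_1(K_n)$ from the cellular chain complex of the graph $K_n$:
\[
H_1(K_n) = C_1(K_n) - C_0(K_n) + H_0(K_n).
\]
Here $C_1(K_n)$ is the span of oriented edges. Since $S_n$ can flip an edge, the edges with their orientation sign give $M(\ind_{S_2}^{S_2}(1,1)) = \ind_{S_2\times S_{n-2}}^{S_n}(\mathrm{sgn}\otimes\triv)$. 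By the Pieri rule this equals $\Sp_n(1,1)\oplus\Sp_n(1)$. Also $C_0(K_n)=\Sp_n(\emptyset)\oplus\Sp_n(1)$ and $H_0=\Sp_n(\emptyset)$. Hence $H_1(K_n)\cong\Sp_n(1,1)$, and $H_1(\conf_2(K_n))\cong 2\Sp_n(1,1)$ for $n\geq 5$. As a consistency check I will also compute at $n=7$, which by Theorem \ref{completeBound_2pts} lies in the stable range.

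For $H_2$, I will compute at $n=8$, which by Theorem \ref{completeBound_2pts} is in the stable range. As in the $K_4$ example, I will form the matrix of $d_1:C_2\to C_1$ with respect to the cell bases $ij\times kl$ and $ij\times k$, $k\times ij$. I will then compute $\ker d_1 = H_2$ as an $S_8$-representation, since $D_2$ is $2$-dimensional and there is no $C_3$. The representation structure is most easily obtained from characters: for each conjugacy class representative $\sigma$, compute the trace of $\sigma$ acting on $\ker d_1$. Equivalently, apply isotypic projectors (Young symmetrizers) for the candidate $\lambda[8]$ and compute the ranks of the projected kernel. A cheaper alternative is the equivariant Euler characteristic:
\[
H_2 = C_2 - C_1 + C_0 - H_0 + H_1.
\]
With $H_1=2\Sp_n(1,1)$ known for $n\geq5$, and the chain modules given by the free modules of the preceding lemma, I expand everything by Pieri/Littlewood–Richardson and take the virtual difference. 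This gives $H_2$ for every $n\geq5$ without any matrix computation, and the remaining task is to check that the difference is $\Sp_n(1,1,1,1)\oplus\Sp_n(2,1,1)\oplus\Sp_n(2,2)$ for $n\geq 6$.

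The main obstacle is the low end of the range. Theorem \ref{completeBound_2pts} only guarantees stability from $n=8$, but the claim starts at $n=6$. The Euler characteristic route handles this uniformly, provided I carefully expand $M(\ind_{S_2\times S_2}^{S_4}(1,1)\otimes(1,1))_n$ and $2M(\ind_{S_2}^{S_3}(1,1))_n$ for small $n$. In that regime some padded partitions $\lambda[n]$ are not yet partitions and drop out, which is where the $n=5$ versus $n=6$ distinction arises. As a fallback, I will directly compute $\ker d_1$ at $n=6$ and $n=7$ by computer, and confirm that the decomposition agrees with the stable one found at $n=8$.
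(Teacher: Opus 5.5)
Your proposal is correct, but its real content follows a different route from the paper's proof.

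The paper's proof is computational. It computes $H_2=\ker d_1$ by machine for $4\le n\le 8$, recovers $H_1$ from the equivariant Euler characteristic, and invokes Theorem \ref{completeBound_2pts} to pass to all $n$. The thresholds $n\ge 6$ and $n\ge 5$ are read off Table \ref{table: ordered 2 pts Kn}.

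You run the Euler characteristic in the opposite direction. You get $H_1\cong 2\Sp_n(1,1)$ for $n\ge 5$ from Theorem \ref{thm: H1 of conf2 of Knn} together with $H_1(K_n)\cong\Sp_n(1,1)$, and then set $H_2=C_2-C_1+C_0-H_0+H_1$. This checks out. Expanding the free chain modules by Pieri gives, for $n\ge 6$,
\[
C_2-C_1+C_0=\Sp_n(1,1,1,1)+\Sp_n(2,1,1)+\Sp_n(2,2)-2\Sp_n(1,1)+\Sp_n(\emptyset),
\]
so $H_2$ is exactly the claimed sum. At $n=5$ the summands $\Sp_n(2,1,1)$ and $\Sp_n(2,2)$ of $C_2$ are absent, and you get $(1,1,1,1,1)$. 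This matches the table and explains the threshold. One small correction: the precise Pieri criterion for $\Sp_n(\nu)$ to occur in $M(\mu)_n$ is $n-|\nu|\ge\mu_1$. In general this is stronger than $\nu[n]$ merely being a partition, though the two coincide in this case.

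What each route buys:
\begin{itemize}
\item Your route is computer-free and proves the statement separately for each $n$ in the range. It makes both Theorem \ref{completeBound_2pts} and your fallback kernel computations unnecessary. Its cost is importing Theorem \ref{thm: H1 of conf2 of Knn} as a nontrivial topological input.
\item The paper's route needs no prior knowledge of $H_1$. That is why it extends uniformly to crown graphs and $K_{n,n,1}$, where no analogue of Theorem \ref{thm: H1 of conf2 of Knn} is available. The paper itself remarks that the $K_n$ and $K_{n,n}$ cases could be done your way.
\end{itemize}
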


\begin{proof}
Combine Theorem \ref{completeBound_2pts} and Table \ref{table: ordered 2 pts Kn}.
\end{proof}

\begin{remark}
    This theorem indicates that the true stable range of the second homology group is actually $n \geq 6$. We will see in later sections that the bounds given by Theorem \ref{relationBound} will be tight in the cases of the complete bipartite graph and the crown graph.
\end{remark}


\begin{table}[th]
\caption{$S_n$-representations afforded by the rational homology of the ordered configuration space of two particles on the complete graph $K_n$ for $4\leq n \leq 8$.}
\label{table: ordered 2 pts Kn}
\begin{tabular}{@{}cclc@{}} \toprule
$n$ & $i$ & $H_i(\conf_2(K_n))$ & Time (s) \\
\midrule
\multirow[t]{2}{*}{4} & 2 & 0 & $<1$ \\[2pt]
& 1 & $(1,1,1,1) + 2(2,1,1)$ & \\ \midrule
\multirow[t]{2}{*}{5} & 2   & $(1,1,1,1,1)$ & $<1$ \\[2pt]
& 1   & $2(3,1,1)$ & \\\midrule
\multirow[t]{2}{*}{6} & 2 & $(2, 1, 1, 1, 1) + (2, 2, 1, 1) + (2, 2, 2)$ & $<1$ \\[2pt]
& 1 & $2(4,1,1)$ & \\\midrule
\multirow[t]{2}{*}{7} & 2 & $(3, 1, 1, 1, 1) + (3, 2, 1, 1) + (3, 2, 2)$ & 3 \\[2pt]
& 1 & $2(5,1,1)$ & \\\midrule
\multirow[t]{2}{*}{8} & 2 & $(4, 1, 1, 1, 1) + (4, 2, 1, 1) + (4, 2, 2)$ & 72 \\[2pt]
& 1 & $2(6,1,1)$ & \\
\bottomrule
\end{tabular}

\end{table}


\begin{table}[th]
\caption{$S_n$-representations afforded by the rational homology of the unordered configuration space of two particles on the complete graph $K_n$ for $4\leq n \leq 8$.}
\label{table: unordered 2 pts on Kn}
\begin{tabular}{@{}cclc@{}} \toprule
$n$ & $i$ & $H_i(\uconf_2(K_n))$ & Time (s) \\
\midrule
\multirow[t]{2}{*}{4} & 2 & 0 & $<1$ \\[2pt]
& 1 & $(1,1,1,1) + (2,1,1)$ & \\ \midrule
\multirow[t]{2}{*}{5} & 2   & 0 & $<1$ \\[2pt]
& 1   & $(3,1,1)$ & \\\midrule
\multirow[t]{2}{*}{6} & 2 & $(2, 2, 1, 1)$ & $<1$ \\[2pt]
& 1 & $(4,1,1)$ & \\\midrule
\multirow[t]{2}{*}{7} & 2 & $(3, 2, 1, 1)$ & 3 \\[2pt]
& 1 & $(5,1,1)$ & \\\midrule
\multirow[t]{2}{*}{8} & 2 & $(4, 2, 1, 1)$ & 18 \\[2pt]
& 1 & $(6,1,1)$ & \\
\bottomrule
\end{tabular}

\end{table}

\begin{thm}
    We have isomorphisms of $S_n$-representations 
    \begin{align*}
        &H_2(\uconf_2(K_n))\cong \mathrm{Sp}_n(2,1,1),\quad n\geq 6\\
        &H_1(\uconf_2(K_n))\cong \mathrm{Sp}_n(1,1),\qquad n\geq 5.
    \end{align*}
\end{thm}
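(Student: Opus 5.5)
The plan mirrors the proof of Theorem \ref{thm: ordered complete}. The statement has two parts: a stable range, and the stable value. For the range, Theorem \ref{completeBound_2pts} already asserts that $H_1(\uconf_2(K_n))$ and $H_2(\uconf_2(K_n))$ stabilize no later than $n=7$ and $n=8$, respectively.

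This bound rests on Theorem \ref{relationBound}, applied to the cellular chain complex of $D^U_2(K_n)$. That complex computes $H_\ast(\uconf_2(K_n))$ because, by Theorem \ref{AbramsModel}, no subdivision is needed for two points on a simple graph. By the lemma just proved, its chain modules are $C_2\cong M((2,1,1))$, $C_1\cong M(\ind_{S_2}^{S_3}(1,1))$ and $C_0\cong M((2))$. All of these are free, with stable degrees $4,3,2$, so the bounds from the ordered case transfer verbatim. Alternatively, $H_\ast(\uconf_2(K_n))$ is the $S_2$-coinvariants of $H_\ast(\conf_2(K_n))$, and taking coinvariants is exact over $\Q$ and commutes with the FI-structure, so the same stable range holds.

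For the stable value, I would compute $H_2(D^U_2(K_n))$ directly for $n\le 8$. First I would write the differential $C_2\to C_1$ using the boundary formula $\partial(ij\times kl)=i\times kl-j\times kl-ij\times k+ij\times l$. The relations $ij\times kl=-kl\times ij$ and $k\times ij=ij\times k$ descend this formula to the quotient. Then I would compute the kernel by computer and decompose it $S_n$-equivariantly, for example by computing characters via traces of permutation actions on a kernel basis, or by applying isotypic projectors. Since $\uconf_2(K_n)$ is connected, $H_0$ is trivial, and $H_1$ then follows from the equivariant Euler characteristic $C_0-C_1+C_2=H_0-H_1+H_2$. The Pieri rule expresses each $M(W)_n$ as an explicit sum of Specht modules, which makes this computation routine. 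This should reproduce Table \ref{table: unordered 2 pts on Kn}. At $n=7$ we get $H_1=(5,1,1)=\Sp_7(1,1)$, and at $n=8$ we get $H_2=(4,2,1,1)=\Sp_8(2,1,1)$. Multiplicity stability from those points onward then gives the stated formulas for all sufficiently large $n$.

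The remaining step is to push the ranges down to $n\ge 6$ and $n\ge 5$. The table shows the same padded partitions $\Sp_n(2,1,1)$ for $n=6,7,8$ and $\Sp_n(1,1)$ for $n=5,\dots,8$. The theoretical bound only guarantees constancy from $7$ and $8$ on, however, so the smaller $n$ are covered only by having checked them explicitly. The cleanest argument uses the stronger per-partition form of Theorem \ref{relationBound}. The multiplicity of $\Sp(\lambda[n])$ is constant for $n\ge 4+|\lambda|$. For $\lambda=(2,1,1)$ in $H_2$ this gives $n\ge 8$, and for $\lambda=(1,1)$ in $H_1$ it gives $n\ge 6$. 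Any other $\lambda$ that could ever appear has $|\lambda|\le 4$, by the Pieri rule applied to the free modules, and all such multiplicities are constant from $n=8$ on. The data at $n=8$ shows they are zero there, and hence zero for all larger $n$.

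The main obstacle is the actual computer calculation at $n=8$: $C_2$ has $\binom{8}{4}\cdot 3=210$ cells and $C_1$ has $3\binom{8}{3}=168$. This is small in the unordered case, but the equivariant decomposition of the kernel requires care. I would use character computation via class representatives rather than a full isotypic decomposition.
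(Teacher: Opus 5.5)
Your proposal is correct and is essentially the paper's argument. As with Theorem \ref{thm: ordered complete}, the stable ranges $n\ge 7$ and $n\ge 8$ from Theorem \ref{completeBound_2pts} are combined with the explicit computations in Table \ref{table: unordered 2 pts on Kn}, with $H_1$ recovered from $H_2$ via the equivariant Euler characteristic. Note that your per-partition use of Theorem \ref{relationBound} does not actually lower the range (it still yields $n\ge 8$ for $H_2$ and $n\ge 7$ for $H_1$), so the cases $n=5,6$ for $H_1$ and $n=6,7$ for $H_2$ rest entirely on the explicit table entries, as you yourself acknowledge.
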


\paragraph{\textit{Combinatorial interpretations}}
In this step of the analysis, we give combinatorial interpretations of subrepresentations of $H_2(\conf_2(K_n))$. In the study of graph configuration spaces, many results rely on understanding generators of $H_i$, see, for example, the discuss in the introduction of \cite{ADCK2}. By scrutinizing generators of subrepresentations, we hope to lay the foundation for gaining similar understandings when $S_n$-equivariant structures are present.

An important type of generators for $H_i$ is the \textit{product class}. We recall its definition.

\begin{defn}\cite[Definition 1.2]{CL}
    A homology class $\sigma\in H_q(\conf_n(G))$ is called the product of classes $\sigma_1\in H_{q_1}(\conf_{n_1}(G_1))$ and $\sigma_2\in H_{q_2}(\conf_{n_2}(G_2))$ for $q_1+q_2=q$ and $n_1+n_2=n$ if it is the image of $\sigma_1\otimes \sigma_2$ under the map \[H_q(\conf_n(G_1\sqcup G_2))\to H_q(\conf_n(G))\] induced by an embedding $G_1\sqcup G_2\hookrightarrow G$.
\end{defn}

In particular, if $G_1$ and $G_2$ are homeomorphic to the circle $S^1$ and $q_1=q_2=1$, then we call their product class a \textit{toric class}. Concretely, we can represent a toric class in $H_2(D_2(G))$ as follows. If $G_1$ is a cycle $v_1-v_2-\cdots-v_k-v_1$ and $G_2$ is a cycle $w_1-w_2-\cdots-w_\ell-w_1$ that is disjoint from $G_1$, then the element
\[\sum_{i=1}^k \sum_{j=1}^\ell \overline{v_iv_{i+1}}\times\overline{w_jw_{j+1}}\] is the corresponding toric class in $H_2(D_2(G))$, where $\overline{v_iv_{i+1}}$ is the edge $v_i\to v_{i+1}$ and $v_{i+1}=v_1$ if $i=k$, and similarly for the $w$'s.

The following proposition gives a combinatorial interpretation of the irreducible components of $H_2(\conf_2(K_n))$. As a consequence, it explains the stabilization of $\mathrm{Sp}_n(1,1,1,1)$ at $n=5$ and that of $\mathrm{Sp}_n(2,1,1)$ and $\mathrm{Sp}_n(2,2)$ only at $n=6$. In this proposition, we write $C_3$ for the cycle on three vertices.

\begin{prop}
    In $H_2(\conf_2(K_n))$, the submodule $\mathrm{Sp}_n(1,1,1,1)$ is induced by $K_5\hookrightarrow K_n$, the submodule $\mathrm{Sp}_n(2,1,1)$ is generated by toric classes induced by $C_3 \sqcup C_3\hookrightarrow K_6$, and the submodule $\mathrm{Sp}_n(2,2)$ is induced by $C_3\sqcup C_3 \sqcup K_{3,3}\hookrightarrow K_6$.
\end{prop}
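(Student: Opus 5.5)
The plan is to exploit the FI-module structure. By Theorem \ref{thm: ordered complete}, $H_2(\conf_2(K_n))$ is the stated sum of three irreducibles for $n\geq 6$. The table records that $H_2(\conf_2(K_5))\cong(1,1,1,1,1)=\mathrm{Sp}_5(1,1,1,1)$ and $H_2(\conf_2(K_4))=0$. So each irreducible component of the stable module is generated, as an FI-module, by its first appearance. Concretely, the FI-submodule generated by $H_2(\conf_2(K_5))$ contains, inside $H_2(\conf_2(K_n))$, the $S_n$-span of the images of the sign class under the maps induced by $K_5\hookrightarrow K_n$. Its character is bounded by $\ind_{S_5\times S_{n-5}}^{S_n}(1^5\otimes\triv)$. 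By Pieri, this contains only $\mathrm{Sp}_n(1,1,1,1)$ and $\mathrm{Sp}_n(1,1,1,1,1)$, and the latter does not occur in $H_2$. So that submodule is either $0$ or $\mathrm{Sp}_n(1,1,1,1)$. It is nonzero because restricting to a sub-$K_5$ and projecting back recovers the class; alternatively, I would check injectivity of $K_5\hookrightarrow K_6$ on $H_2$ directly by computer.

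For the remaining two summands I work inside $H_2(\conf_2(K_6))$, which is $(2,1,1,1,1)+(2,2,1,1)+(2,2,2)$. The component $(2,1,1,1,1)=\mathrm{Sp}_6(1,1,1,1)$ is the image from $K_5$. The new pieces are $(2,2,1,1)=\mathrm{Sp}_6(2,1,1)$ and $(2,2,2)=\mathrm{Sp}_6(2,2)$.

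First I take the toric class $T$ attached to a splitting $\{a,b,c\}\sqcup\{d,e,f\}$ of $[6]$, written as the explicit cubical cycle $\sum \overline{v_iv_{i+1}}\times\overline{w_jw_{j+1}}$. The $S_6$-span of $T$ is a quotient of the induced representation from the stabilizer of $T$. This stabilizer is $S_3\times S_3$ acting by sign-twisted rotations. Swapping the two triangles is not in the stabilizer, because the factors are ordered. I will compute the isotypic decomposition of this span, either by applying the isotypic projectors $\sum_g\chi(g)g$ to $T$ or by direct rank computations on the chain complex from the code. The expectation is that the span is $(2,1,1,1,1)\oplus(2,2,1,1)$, or contains $(2,2,1,1)$ with a nonzero projection. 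That would show $\mathrm{Sp}_n(2,1,1)$ is generated by toric classes. Pushing forward along $K_6\hookrightarrow K_n$ and using the same Pieri argument, with generation in degree $6$, gives the claim for all $n\geq 6$.

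For $\mathrm{Sp}_n(2,2)$, I would verify that the $(2,2,2)$-isotypic component is not in the span of toric classes. I would then exhibit a cycle supported on the edges of two disjoint triangles together with the $K_{3,3}$ between them, which is exactly all of $K_6$ arranged as $C_3\sqcup C_3\sqcup K_{3,3}$. The natural candidate is the $(2,2,2)$-projection of a cycle built from $H_1$ classes of $K_{3,3}$ and one triangle. Again I would produce it by applying the isotypic idempotent for $(2,2,2)$ to a suitable chain and checking that the result is a nonzero cycle.

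The main obstacle is this last step: identifying a clean explicit representative for $\mathrm{Sp}_n(2,2)$ and interpreting "induced by $C_3\sqcup C_3\sqcup K_{3,3}$". I would first try projecting toric-type chains that mix triangle edges with $K_{3,3}$ edges, and I would rely on computer verification of nonvanishing.
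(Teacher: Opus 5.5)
Your framework is the paper's: reduce to $K_6$, project $H_2(\conf_2(K_6))$ onto its three multiplicity-one isotypic components, and exhibit generators. Your Pieri argument for passing to $K_n$ tightens the paper's one-line reduction. Nonvanishing also needs no computer: $D_2(K_n)$ has no $3$-cells, so $H_2=Z_2\subseteq C_2$, and every $K_m\hookrightarrow K_n$ is injective on $H_2$.

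The gap is in the $\mathrm{Sp}_n(2,2)$ step, where the check you intend to run is false. Put $C=(123)$, $C'=(456)$, $T=C\times C'$ and $\sigma=(14)(25)(36)$, so that $\sigma T=C'\times C$.
\begin{itemize}
\item Both $T\pm\sigma T$ transform by $\mathrm{sgn}\otimes\mathrm{sgn}$ under $S_3\times S_3$ and by $\pm1$ under $\sigma$. Inducing these characters from $S_3\wr S_2$ to $S_6$ gives $(2,2,2)\oplus(2,1,1,1,1)$ for the sign $+$, and $(2,2,1,1)\oplus(1^6)$ for the sign $-$.
\item Since $(1^6)$ does not occur in $H_2$, the element $T-\sigma T$ spans exactly $V_{(2,2,1,1)}$. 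This gives the $\mathrm{Sp}_n(2,1,1)$ clause with no computation.
\item The element $T+\sigma T$ lies in $V_{(2,2,2)}\oplus V_{(2,1,1,1,1)}$, but not in $V_{(2,1,1,1,1)}$. To see this, normalize the $K_5$-classes as $X_v(\overline{ij}\times\overline{kl})=\mathrm{sgn}(i,j,k,l,m,v)$ on $[6]\setminus\{v\}$. Then $V_{(2,1,1,1,1)}$ is spanned by the $X_v$, and $\sum_vX_v=0$.
\item A cell avoiding $\{p,q\}$ occurs only in $X_p$ and $X_q$, with opposite coefficients. So $\sum_va_vX_v$ has coefficient $\pm(a_p-a_q)$ on it.
\item For $p\le 3<q$, such a cell can be chosen with both edges joining $\{1,2,3\}$ to $\{4,5,6\}$, where $T+\sigma T$ vanishes. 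Hence $T+\sigma T=\sum_va_vX_v$ would force all $a_v$ equal, giving $T+\sigma T=0$, which is absurd.
\end{itemize}
So $T+\sigma T$ has a nonzero $(2,2,2)$-component. With the cell-orthonormal inner product one also finds $\langle T+\sigma T,X_v\rangle=\pm6$, so its $(2,1,1,1,1)$-component is nonzero as well. The $S_6$-orbit of a single toric class therefore spans all of $H_2(\conf_2(K_6))$. Your expected span $(2,1,1,1,1)\oplus(2,2,1,1)$ is wrong, and the verification that $V_{(2,2,2)}$ avoids the toric span would fail.

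Your fallback candidate is not well defined either. A cycle ``built from $H_1$ classes of $K_{3,3}$ and one triangle'' cannot be a product class, because the only vertex-disjoint pairs of cycles in $K_6$ are pairs of triangles. You give no other construction, so the third clause is left without a generator. The paper supplies one explicitly: $2(C\times C'+C'\times C)+K$, where $K$ spans $H_2(\conf_2(K_{3,3}))\cong\Q$ for the $K_{3,3}$ on the nine edges joining $\{1,2,3\}$ to $\{4,5,6\}$. The paper shows this element lies in $V_{(2,2,2)}$, while $K$ by itself lies in no single component. The splitting into a toric piece on $C_3\sqcup C_3$ plus a $K_{3,3}$ piece is what ``induced by $C_3\sqcup C_3\sqcup K_{3,3}$'' means there.

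To repair your proposal, drop the non-toric claim and do one of the following. Either check that this explicit element is killed by the $(2,1,1,1,1)$ and $(2,2,1,1)$ projectors. Or simply take the $(2,2,2)$-projection of $T+\sigma T$, which the argument above shows is a nonzero generator of $V_{(2,2,2)}$. In particular, $\mathrm{Sp}_n(2,2)$ is in fact also generated by toric classes.
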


\begin{proof}
The proof is by explicit computations. Since $H_2(\conf_2(K_n))$ stabilizes at $n=6$, it suffices to understand the case $K_6$. First, we generate a basis for $H_2$ using 2-chains. Then we project $H_2$ to the irreducible submodules following, for example, \cite[Section 2.7]{Serre1977}. This projection is canonical when an irreducible subrepresentation has multiplicity 1. We denote by $V_\lambda$ the subspace of $H_2(\conf_2(K_n))$ affording the irreducible representation indexed by the partition $\lambda$.

The subspace $V_{(2,1,1,1,1)}$ is generated by the following element, which is defined on the induced subgraph on vertices $\{1,2,3,4,6\}$, and its $S_6$-images.{\footnotesize \begin{align*} 
&{12} \times {34}
- {12} \times {36}
+{12} \times {46}
- {13} \times {24}
+{13} \times {26}
- {13} \times {46}
+{14} \times {23}
- {14} \times {26}
+{14} \times {36}
- {16} \times {23}\\
+&{16} \times {24}
- {16} \times {34}
+{23} \times {14}
- {23} \times {16}
+{23} \times {46}
- {24} \times {13}
+{24} \times {16}
- {24} \times {36}
+{26} \times {13}
- {26} \times {14}\\
+&{26} \times {34}
+{34} \times {12}
- {34} \times {16}
+{34} \times {26}
- {36} \times {12}
+{36} \times {14}
- {36} \times {24}
+{46} \times {12}
- {46} \times {13}
+{46} \times {23}.
\end{align*}}The component $V_{(2,2,1,1)}$ is generated by the toric class $(125)\sqcup (346)$ and its $S_6$-images.

The component $V_{(2,2,2)}$ is generated by the following element and its $S_6$-images.
{\footnotesize \begin{align*}
2 (&{12} \times {45} 
-{12} \times {46} 
+{12} \times {56} 
-{13} \times {45} 
+{13} \times {46} 
-{13} \times {56} 
+{23} \times {45} 
-{23} \times {46} 
+{23} \times {56} \\
+&{45} \times {12}  
-{45} \times {13} 
+{45} \times {23} 
-{46} \times {12} 
+{46} \times {13} 
-{46} \times {23} 
+{56} \times {12} 
-{56} \times {13} 
+{56} \times {23}) \\
&\\
+&{14} \times {25} 
-{14} \times {26} 
-{14} \times {35} 
+{14} \times {36} 
-{15} \times {24} 
+{15} \times {26} 
+{15} \times {34} 
-{15} \times {36} 
+{16} \times {24}\\
-&{16} \times {25} 
-{16} \times {34} 
+{16} \times {35} 
-{24} \times {15} 
+{24} \times {16} 
+{24} \times {35} 
-{24} \times {36} 
+{25} \times {14} 
-{25} \times {16}\\
-&{25} \times {34} 
+{25} \times {36} 
-{26} \times {14} 
+{26} \times {15} 
+{26} \times {34} 
-{26} \times {35} 
+{34} \times {15} 
-{34} \times {16} 
-{34} \times {25}\\
+&{34} \times {26} 
-{35} \times {14} 
+{35} \times {16} 
+{35} \times {24} 
-{35} \times {26} 
+{36} \times {14} 
-{36} \times {15} 
-{36} \times {24} 
+{36} \times {25}.
\end{align*}}This class is the sum of two other classes. The first one is a toric class from $V_{(2,2,1,1)}$ defined on $(123)\sqcup (456)$ (dashed edges in Figure \ref{fig: K6}); and the second class is defined on the complement of these cycles, which is a complete bipartite graph $K_{3,3}$ (solid edges in Figure \ref{fig: K6}). It is worth noting that the class induced by $K_{3,3}\hookrightarrow K_6$ by itself is not in any of the three irreducible components.
\begin{figure}
\begin{tikzpicture}
\newdimen\r
\r=1cm
\coordinate (1) at (0:\r);
\coordinate (2) at (60:\r);
\coordinate (3) at (120:\r);
\coordinate (4) at (180:\r);
\coordinate (5) at (240:\r);
\coordinate (6) at (300:\r);

\draw (1) node[right] {$1$};
\draw (2) node[right] {$2$};
\draw (3) node[left] {$3$};
\draw (4) node[left] {$4$};
\draw (5) node[left] {$5$};
\draw (6) node[right] {$6$};

\draw (1)--(4);
\draw (1)--(5);
\draw (1)--(6);
\draw (2)--(4);
\draw (2)--(5);
\draw (2)--(6);
\draw (3)--(4);
\draw (3)--(5);
\draw (3)--(6);
\draw[dashed] (1)--(2)--(3)--cycle;
\draw[dashed] (4)--(5)--(6)--cycle;
\end{tikzpicture}
\caption{The edges of $K_6$ decomposes into two 3-cycles and $K_{3,3}$.}
\label{fig: K6}
\end{figure}
\end{proof}

\subsection{Complete bipartite graphs}

\paragraph{\textit{Computing the bounds}}
Let $K_{n,n}$ be the complete bipartite graph with $2n$ vertices. Label its vertices by $\{1,\dots,n\} \cup\{\bar 1,\dots,\bar n\}$. The edge set consists of $\{i,\bar j\}$ for every pair $i,j\in [n]$; we assume the edges are oriented $i\to \bar j$. The symmetric group $S_n$ acts on $K_{n,n}$ by permuting the vertices $\{1,\dots,n\}$ and $\{\bar1,\dots,\bar n\}$ simultaneously. We begin by computing bounds on when our stable behavior will kick in. Just as with the prior cases, we consider the chains of the discretized configuration space $D_2(K_{n,n})$. As before, the chain modules $C_i$ are always free. To see this, we may impose an FI$\sharp$-structure on these groups by having morphisms act in the natural way on any chain that only contains numbers from their true domain, and as 0 otherwise. Using Theorem \ref{relationBound}, along with the fact that $s(C_2) = 4, s(C_1) = 3, s(C_0) = 2$, we obtain the following.

\begin{thm} \label{completebipartiteBound}
Let $K_{n,n}$ denote the complete bipartite graph on $2n$ vertices. Then the FI-modules $H_1(\conf_2(K_{n,n}))$ and $H_2(\conf_2(K_{n,n}))$ are multiplicity stable no later than $n = 7$ and $n = 8$, respectively. The same holds for the unordered configuration spaces.
\end{thm}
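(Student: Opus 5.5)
The plan is to apply Theorem \ref{relationBound} to the cellular chain complex of the discretized configuration space $D_2(K_{n,n})$. Since $K_{n,n}$ is simple and we only configure two points, Theorem \ref{AbramsModel} needs no subdivision, so $D_2(K_{n,n}) \simeq \conf_2(K_{n,n})$ equivariantly for $S_n$ (and for $S_2$). Therefore $H_i(\conf_2(K_{n,n}))$ is the homology of the complex $C_2 \to C_1 \to C_0$ of FI-modules, which are $\Q$-vector spaces as required.

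The first step is to check that each $C_i$ is a finitely generated FI$\sharp$-module, so that it is free by Theorem \ref{sharpisfree}. A cell of $D_2(K_{n,n})$ is a product $\sigma_1\times\sigma_2$ of vertices and edges with disjoint vertex sets. Each vertex $i$ or $\bar i$, and each edge $i\bar j$, carries "labels" in $[n]$: $\{i\}$ and $\{i,j\}$ respectively. An FI$\sharp$ morphism $f$ with true domain $A$ acts on such a cell through the graph map $K_{A,A}\to K_{m,m}$ if every label lies in $A$, and by $0$ otherwise. This respects composition because the induced subgraph of $K_{n,n}$ on $A\cup\bar A$ is exactly $K_{A,A}$, and disjointness of vertex sets is preserved by injections.

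The next step is to bound the stable degrees. A 2-cell $i\bar j\times k\bar l$ involves at most $4$ labels, a 1-cell at most $3$, and a 0-cell at most $2$. So the free modules are generated in degrees $\le 4,3,2$, which gives $s(C_2)=4$, $s(C_1)=3$, $s(C_0)=2$. The generating degree of a free module equals its stable degree, and the count of cells grows polynomially of degrees $4,3,2$ in $n$, consistent with the Remark on stable degrees. One care point is that labels may coincide: $i\bar j\times j\bar i$ is a legitimate 2-cell with only two labels. This only lowers the degrees of some generators and does not affect the upper bounds.

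Theorem \ref{relationBound} then gives stabilization of $H_1$ no later than $\max\{s(C_2),s(C_1)\}+s(C_1)=4+3=7$. For $H_2$, I use the complex $0\to C_2\to C_1$, so the bound is $\max\{s(C_3),s(C_2)\}+s(C_2)$ with $C_3=0$, giving $4+4=8$.

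For the unordered case, the rational chains of $D^U_2(K_{n,n})$ are the $S_2$-coinvariants of $C_\bullet$, since $\Q[S_2]$ is semisimple. Coinvariants of a free FI$\sharp$-module remain FI$\sharp$-modules generated in the same degrees, because the $S_2$-swap commutes with the FI$\sharp$ action. Equivalently, the homology of the unordered space is the $S_2$-coinvariant summand of the ordered homology, and a direct summand of a multiplicity-stable family is stable in the same range. I expect the main obstacle to be the FI$\sharp$ well-definedness, in particular checking that orientation signs $i\to\bar j$ are preserved under the action; the rest is bookkeeping.
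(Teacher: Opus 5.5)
Your proposal is correct and is essentially the paper's argument. The FI$\sharp$-structure (act naturally on cells whose labels lie in the true domain, by $0$ otherwise) makes $C_2,C_1,C_0$ free with $s=4,3,2$, and Theorem \ref{relationBound} then gives $4+3=7$ for $H_1$ and $4+4=8$ for $H_2$.

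For the unordered case, keep your first justification: the $S_2$-coinvariant complex is again a complex of free FI-modules with stable degrees at most $4,3,2$, so Theorem \ref{relationBound} applies to it directly. Drop the ``equivalently'' remark, because a direct summand of a multiplicity-stable family need not stabilize in the same range in general.
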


\paragraph{\textit{$S_n$-equivariant chain modules}}

\begin{lem}
The representations afforded by chain modules of $D_2(K_{n,n})$ are
\begin{align*}
C_2(D_2(K_{n,n})) &\cong 2M(2) \oplus 4M(3) \oplus M(4),\quad n\geq 4\\
C_1(D_2(K_{n,n})) &\cong 8M(2) \oplus 4M(3),\quad n\geq 3\\
C_0(D_2(K_{n,n})) &\cong 2M(1) \oplus 4M(2),\quad n\geq 2 
\end{align*}
\end{lem}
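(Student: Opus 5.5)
The plan is to decompose each chain module of $D_2(K_{n,n})$ into $S_n$-orbits of cells. For each orbit type, we read off the set of indices in $[n]$ that the cell involves, together with its stabilizer, and then identify the orbit's span with a free module $M(W)$. The mechanism is the same as for $K_n$. A cell is determined by a set $A\subseteq[n]$ of \emph{indices}, where an index $i$ counts as used if either $i$ or $\bar i$ occurs in the cell, together with a configuration on those indices. The pointwise stabilizer of the index set $A$ acts trivially on the cell, so the span of all cells with index set of size $m$ is $\ind_{S_m\times S_{n-m}}^{S_n}(W_m\otimes\Q)=M(W_m)$. Here $W_m$ is the $S_m$-representation spanned by the cells whose index set is exactly $[m]$. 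Each orbit of cells with exactly $m$ indices and trivial setwise stabilizer contributes $\Q[S_m]$, that is, one copy of $M(m)$, so it suffices to count such orbits. We must also check that no cell carries a sign twist, i.e.\ no permutation in $S_A$ fixes a cell while reversing its orientation. This should hold because vertices $i$ and $\bar j$ are distinguishable by their bar, and edges are oriented $i\to\bar j$ canonically.

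For $C_0$, the cells are $x\times y$ with $x\neq y$ vertices. If both are unbarred, or both barred, there are two indices, giving $2M(2)$. If exactly one is barred, with indices $i,j$, the case $i=j$ gives one index, and the two positions ($i\times\bar i$ and $\bar i\times i$) give $2M(1)$. The case $i\neq j$ gives two ordered choices of position and uses two indices; since $i$ and $j$ play distinguishable roles (barred versus unbarred), each orbit is free and contributes $M(2)$. This yields $2M(1)\oplus 4M(2)$. For $C_1$, the cells are edge $\times$ vertex or vertex $\times$ edge, with the edge $i\bar j$ and the vertex not an endpoint. We do a case analysis on the index coincidences among $i$, $j$, and the vertex index $k$. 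The case $i=j$ with $k$ distinct, where the vertex is $k$ or $\bar k$, gives 2 orbits on 2 indices. The case $i\neq j$ with $k\in\{i,j\}$, where the vertex is $\bar i$ or $j$, gives 2 orbits on 2 indices. The case of three distinct indices, with the vertex $k$ or $\bar k$, gives 2 orbits on 3 indices. Multiplying by 2 for the factor order gives $8M(2)\oplus 4M(3)$, which is nonzero from $n\ge3$ onwards. For $C_2$, the cells are $i\bar j\times k\bar l$ with $\{i,\bar j\}\cap\{k,\bar l\}=\emptyset$, i.e.\ $i\neq k$ and $j\neq l$. We enumerate the equality patterns among $i,j,k,l$ under these constraints. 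The pattern $i=j$, $k=l$ gives 2 indices. The patterns $i=l$, $j=k$ and the cross coincidence $i=l$, $k=j$ also give 2 indices, but we must recheck whether these are really distinct patterns. The patterns with exactly one coincidence among $i=j$, $k=l$, $i=l$, $k=j$ give 4 orbits on 3 indices. The pattern with everything distinct gives 1 orbit on 4 indices.

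The main obstacle is getting the orbit count exactly right and verifying freeness. The doubly coincident patterns for $C_2$ need care: the pattern $i=j,k=l$ is the cell $i\bar i\times k\bar k$, and the pattern $i=l,k=j$ is the cell $i\bar k\times k\bar i$. Together these should give exactly the claimed $2M(2)$. By contrast, $i=j$ and $i=l$ together would force $j=l$, which is excluded, so no other double pattern arises. We must also confirm that the only permutation of the index set stabilizing a cell is the identity. For instance, the transposition $(ik)$ sends $i\bar i\times k\bar k$ to $k\bar k\times i\bar i$, which is a different cell because the factors are ordered; this is why the ordered case gives regular representations. As a sanity check, we will compare $\dim$ with direct counts: $\dim C_0=2n(2n-1)=2n+4n(n-1)$, and $\dim C_2=2\binom n2\cdot2+4\binom n3\cdot6+\binom n4\cdot24$ should equal the number of pairs of disjoint edges, $n^2(n-1)^2$. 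Finally, the range conditions $n\ge4,3,2$ are exactly where the top-index orbits first appear, so the formulas hold there as stated.
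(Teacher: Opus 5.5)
Your proposal is correct and follows essentially the paper's route: the paper also enumerates the $S_n$-orbits (combinatorial types) of cells in each degree, notes there are no sign relations because $S_n$ never reverses an edge of $K_{n,n}$, and lets each type on $a$ indices contribute one copy of $M(a)$, with the same orbit counts you get ($2,4,1$ in degree $2$; $8,4$ in degree $1$; $2,4$ in degree $0$). The only wobble is in your degree-$2$ enumeration, where ``$i=l$, $j=k$'' and ``$i=l$, $k=j$'' are the same pattern; your later resolution into exactly the two cells $i\bar i\times k\bar k$ and $i\bar k\times k\bar i$ is right.
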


\begin{proof}
Recall that the edges of $K_{n,n}$ are ${i\bar j}$ and are oriented as $i\to \bar j$. The chain module $C_2$ is generated by ordered pairs of edges, which partition into seven $S_n$-orbits, or combinatorial types; they are
\[
{i\bar i}\times {j\bar j}, \,{i\bar j}\times {j\bar i},\,
{i\bar j}\times {k\bar k}, \,{k\bar k}\times {i\bar j}, \,{i\bar j}\times {j\bar k}, \,{j\bar k}\times {i\bar j}, \,
{i\bar j}\times {k\bar l}.
\] Since the $S_n$-action on $K_{n,n}$ never reverses an edge, these generators satisfy no relations. Each type contributes a summand of $M(a)$ where $a$ is the number of letters appearing.

The chain module $C_1$ is generated by ordered pairs of an edge and a vertex. Assuming the edge is ordered first, the combinatorial types are
\[
{i\bar i} \times j,\,{i\bar i} \times \bar j,\,{i\bar j} \times \bar i,\,{i\bar j} \times j,\,{i\bar j} \times k,\,{i\bar j} \times \bar k.
\] Each type contributes a summand of $M(a)$ where $a$ is the number of letters appearing. The situation is analogous when the edge is ordered second.

Lastly, the chain module $C_0$ is generated by ordered pairs of vertices. Their combinatorial types are
\[
i\times \bar i,\,\bar i\times i,\,i\times j,\,i\times\bar j,\,\bar i\times j,\,\bar i\times \bar j.
\] Each type contributes a summand of $M(a)$ where $a$ is the number of letters appearing.
\end{proof}

\begin{lem}
The representations afforded by chain modules of $D^U_2(K_{n,n})$ are
\begin{align*}
C_2(D^U_2(K_{n,n})) &\cong 2M((1,1))\oplus 2M(3)\oplus M(2(2,1,1)\oplus 2(3,1)),\quad n\geq 4\\
C_1(D^U_2(K_{n,n})) &\cong 4M(2)\oplus 2M(3),\quad n\geq 3\\
C_0(D^U_2(K_{n,n})) &\cong M(1)\oplus 2M((2)) \oplus M(2),\quad n\geq 2
\end{align*}
\end{lem}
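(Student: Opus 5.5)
The plan is to obtain $D^U_2(K_{n,n})$ from $D_2(K_{n,n})$ by quotienting by the free $S_2$-action that swaps the two factors. This action commutes with the $S_n$-action. It is free on cells because the two factors of any cell of $D_2$ are disjoint, so no cell is fixed. Hence $C_i(D^U_2(K_{n,n}))$ is the $S_2$-coinvariants of $C_i(D_2(K_{n,n}))$, with one basis element for each $S_2$-orbit of ordered cells.

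The one subtlety is orientation. Swapping the factors of $\sigma\times\tau$ contributes the sign $(-1)^{\dim\sigma\dim\tau}$. So in $C_2$ we impose $e\times f=-f\times e$ for edges $e,f$. In $C_1$ and $C_0$ the swap carries no sign. As in the ordered case, the FI$\sharp$-structure (act naturally on cells whose labels lie in the true domain, and by $0$ otherwise) passes to the coinvariants. So each chain module is free, and it suffices to sort the unordered cells into $S_n$-orbit types. A type involving $a$ letters contributes $M(W)$, where $W$ is the representation of $S_a$ induced from the stabilizer of a representative, twisted by the orientation sign.

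I would then go through the types for each $C_i$. For $C_0$, the unordered vertex pairs $\{x,y\}$ fall into four types. The pair $\{i,\bar i\}$ gives $M(1)$. The pairs $\{i,j\}$ and $\{\bar i,\bar j\}$ have stabilizer $S_2$ acting trivially, so each gives $M((2))$. The pair $\{i,\bar j\}$ with $i\ne j$ has trivial stabilizer and gives $M(2)$.

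For $C_1$, every unordered cell has a unique representative with the edge written first. This leaves the six ordered types ${i\bar i}\times j$, ${i\bar i}\times\bar j$, ${i\bar j}\times\bar i$, ${i\bar j}\times j$, ${i\bar j}\times k$ and ${i\bar j}\times\bar k$. None of these has a nontrivial stabilizer, since edges are never reversed. This gives $4M(2)\oplus 2M(3)$.

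For $C_2$, the seven ordered types collapse into five unordered types.
\begin{itemize}
\item $\{{i\bar i},{j\bar j}\}$: the transposition $(ij)$ swaps the two edges, so the orientation sign gives $M((1,1))$.
\item $\{{i\bar j},{j\bar i}\}$: the transposition $(ij)$ again swaps the two edges, giving $M((1,1))$.
\item $\{{i\bar j},{k\bar k}\}$ and $\{{i\bar j},{j\bar k}\}$: the two edges are distinguishable, so each gives $M(3)$.
\item $\{{i\bar j},{k\bar l}\}$: the stabilizer in $S_4$ is generated by $(ik)(jl)$, which swaps the edges. So this type contributes $M(W)$ with $W=\ind_{\langle(ik)(jl)\rangle}^{S_4}\mathrm{sgn}$.
\end{itemize}

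The main computational step is decomposing $W$. By Frobenius reciprocity, the multiplicity of $\Sp(\lambda)$ in $W$ is $(\dim\lambda-\chi_\lambda((12)(34)))/2$. Using the character values $1,-1,2,-1,1$ on a double transposition for $(4),(3,1),(2,2),(2,1,1),(1,1,1,1)$, this gives $W\cong 2(3,1)\oplus 2(2,1,1)$. This is consistent with $\dim W=12$.

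Finally, the lower bounds on $n$ are exactly the largest number of letters used in each module: $4$ for $C_2$, $3$ for $C_1$, and $2$ for $C_0$. Below these bounds some types cannot occur.
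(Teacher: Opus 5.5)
Your proposal is correct and follows the same route as the paper. You enumerate the $S_n$-orbit types of unordered cells, read off each stabilizer together with its orientation sign, and take the corresponding free module; your Frobenius-reciprocity computation $W=\ind_{\langle (ik)(jl)\rangle}^{S_4}\mathrm{sgn}\cong 2(3,1)\oplus 2(2,1,1)$ supplies the step the paper only asserts.

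One small discrepancy: in $C_1$ you impose $e\times v=v\times e$, whereas the paper writes $e\times v=-v\times e$. Your Koszul sign $(-1)^{\dim\sigma\dim\tau}$ is the one compatible with the boundary map, but the edge-first cells form a permutation basis with trivial stabilizers, so the module $4M(2)\oplus 2M(3)$ comes out the same under either sign.
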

\begin{proof}
The combinatorial types in $C_2$ are
\[
{i\bar i}\times{j\bar j},\,{i\bar j}\times{j\bar i},\,{i\bar j}\times{k\bar k},\,{i\bar j}\times{j\bar k},\,{i\bar j}\times{k\bar l}.
\] The generators satisfy the relation $e_1\times e_2=-e_2\times e_1$ where $e_i$ is an edge. Respectively, they contribute the following summands
\[
M((1,1)),\,M((1,1)),\,M(3),\,M(3),\,M(2(2,1,1)\oplus 2(3,1)).
\] The combinatorial types in $C_1$ are \[
{i\bar i}\times j,\,{i\bar i}\times \bar j,\,{i\bar j}\times j,\,{i\bar j}\times \bar i,\,{i\bar j}\times k,\,{i\bar j}\times \bar k.
\] The generators satisfy the relation $e\times v=-v\times e$ where $e$ is an edge and $v$ is a vertex. The first four contribute $M(2)$ and the last two $M(3)$. 

The combinatorial types in $C_0$ are \[
i\times \bar i,\,i\times  j,\,i\times \bar j,\,\bar i\times \bar j.
\] They satisfy the relation $v_1\times v_2=v_2\times v_1$ where $v_i$ is a vertex. The first contributes $M(1)$, the second and fourth $M((2))$, the third $M(2)$.
\end{proof}

\paragraph{\textit{Stable homology}}


\begin{thm}\label{thm: ordered bipartite}
There are isomorphisms of $S_n$-representations
\begin{align*}
H_2(\conf_2(K_{n,n})) \cong &\mathrm{Sp}_n(1,1,1,1)\oplus 3\mathrm{Sp}_n(2,1,1) \oplus 2\mathrm{Sp}_n(2,2) \oplus 3\mathrm{Sp}_n(3,1) \oplus \mathrm{Sp}_n(4) \\
&\qquad \oplus 4\mathrm{Sp}_n(1,1,1) \oplus 8\mathrm{Sp}_n(2,1) \oplus 4\mathrm{Sp}_n(3) \oplus 6\mathrm{Sp}_n(1,1) \oplus 6\mathrm{Sp}_n(2) \\
&\qquad \oplus 4\mathrm{Sp}_n(1) \oplus 2\mathrm{Sp}_n(\emptyset), \quad n\geq 8\\
H_1(\conf_2(K_{n,n}))\cong & 2\mathrm{Sp}_n(1,1)\oplus 2\mathrm{Sp}_n(2)\oplus 2\mathrm{Sp}_n(1)\oplus 2\mathrm{Sp}_n(\emptyset),\quad n\geq 4
\end{align*}
\end{thm}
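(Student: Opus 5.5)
Our plan follows the template used for Theorem \ref{thm: ordered complete}. First, Theorem \ref{completebipartiteBound} reduces the statement to a finite computation: $H_1(\conf_2(K_{n,n}))$ is multiplicity stable for $n\geq 7$ and $H_2$ for $n\geq 8$. Here we use that $K_{n,n}$ is simple and that every cycle has length at least $4\geq 3$. For $n=2$ this is a single $4$-cycle and needs care, but for $n\geq 3$ every vertex has degree at least $3$. So Theorem \ref{AbramsModel} applies with no subdivision, and $D_2(K_{n,n})\simeq \conf_2(K_{n,n})$ equivariantly. It therefore suffices to compute the $S_n$-representations $H_i(D_2(K_{n,n}))$ at $n=8$ for $H_2$, and at $n=7$ (or in fact at $n\geq 4$) for $H_1$. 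We also need the intermediate values $4\leq n\leq 7$ to justify the sharper claimed range $n\geq 4$ for $H_1$.

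For $H_1$ we avoid heavy computer algebra by invoking Theorem \ref{thm: H1 of conf2 of Knn}. For $n\geq 3$ it gives $H_1(\conf_2(K_{n,n}))\cong H_1(K_{n,n})^{\oplus 2}$ as $S_n$-representations. Since $K_{n,n}$ is connected, $H_1(K_{n,n})\cong C_1(K_{n,n})-C_0(K_{n,n})+\Q$ as virtual representations. The edges $i\bar j$ form $M(1)\oplus M(2)$, the vertices form $2M(1)$, and $M(2)_n\cong \Sp_n(\emptyset)\oplus 2\Sp_n(1)\oplus\Sp_n(2)\oplus\Sp_n(1,1)$ once $n\geq 4$. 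Together these give $H_1(K_{n,n})\cong \Sp_n(\emptyset)\oplus\Sp_n(1)\oplus\Sp_n(2)\oplus\Sp_n(1,1)$ for $n\geq 4$. Doubling this yields the stated formula for $H_1$ in exactly the range $n\geq 4$, which is where $M(2)$ stabilizes.

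For $H_2$ we use the chain-level Lemma describing $C_2\cong 2M(2)\oplus4M(3)\oplus M(4)$, $C_1\cong 8M(2)\oplus4M(3)$ and $C_0\cong 2M(1)\oplus4M(2)$. We decompose these via Pieri at $n=8$, and then use the Euler characteristic identity $H_0-H_1+H_2=C_0-C_1+C_2$ with $H_0=\Q$ (Remark \ref{rmk: H0 is triv}) and the $H_1$ just obtained. This determines $H_2$ as a virtual representation, with no explicit kernel computation needed. We expect this step to be straightforward bookkeeping. As a consistency check, we will verify by computer that the rank of $d:C_2\to C_1$ at small $n$ matches, and that the dimension count $\dim H_2=\dim C_2-\dim C_1+\dim C_0-1+\dim H_1$ agrees with the claimed decomposition.

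The main obstacle is that the Euler characteristic argument relies on $H_1$ being known, which requires Theorem \ref{thm: H1 of conf2 of Knn} to hold equivariantly. If we did not trust this, we would have to compute $\ker d$ on $C_2$ directly at $n=8$. That is a matrix of size roughly $\dim C_1\times\dim C_2$, in the thousands, together with equivariant projection via character formulas \cite[Section 2.7]{Serre1977}. We would first try exact sparse rank computation, and fall back on the Euler characteristic route. The latter is legitimate here because $H_1$ is determined independently by Theorem \ref{thm: H1 of conf2 of Knn}.
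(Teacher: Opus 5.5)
Your proposal is correct, but it runs the paper's argument in the opposite direction. The paper computes $H_2(\conf_2(K_{n,n}))$ by computer for $3\le n\le 8$ (Table \ref{table: H2Knn as SnxS2-reps}; the $n=8$ entry alone took $738573$ seconds). It then invokes Theorem \ref{completebipartiteBound} for $n\ge 8$ and recovers $H_1$ from the Euler characteristic. You instead take $H_1$ from Theorem \ref{thm: H1 of conf2 of Knn} and recover $H_2$ from the Euler characteristic. The paper mentions this alternative after that theorem but does not carry it out.

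Doing your bookkeeping confirms that it works, and the answer is cleaner than you anticipate. One has $C_0-C_1+C_2=2M(1)-2M(2)+M(4)$ and $H_1=2\bigl(M(2)-M(1)+\Q\bigr)$, so $H_2\cong M(4)_n\oplus\Q$ as $S_n$-representations for every $n\ge 3$. Listing partitions in the order of the statement, the Pieri decomposition of $M(4)_n$ for $n\ge 8$ has multiplicities $1,3,2,3,1,4,8,4,6,6,4,1$. The extra $\Q$ raises the last of these to $2$, which is exactly the stated formula.

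So your route needs neither the computer nor the stability bound, since the Euler characteristic argument works at each $n$ separately. The closed form also explains why $n\ge 8$ is sharp: $\Sp_n(4)\subset M(4)_n$ only exists once $n\ge 8$. It reproduces the table at $n=3,4,5$ as well.

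In exchange, the paper's route is uniform across all the families it treats and does not depend on a separately proved description of $H_1$. It also records the $S_n\times S_2$-structure used for the unordered case. Your argument could be run $S_n\times S_2$-equivariantly too, since Theorem \ref{thm: H1 of conf2 of Knn} is stated that way.

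Finally, the ``main obstacle'' you flag is not one. That isomorphism is induced by the equivariant inclusion $\conf_2(K_{n,n})\hookrightarrow K_{n,n}\times K_{n,n}$ together with the natural K\"unneth decomposition, so equivariance is automatic.
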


\begin{proof}
In Table \ref{table: H2Knn as SnxS2-reps}, the isotypical component of $H_2(\conf_2(K_{n,n}))$ indexed by $\lambda$ is $\lambda \otimes (1,1) \oplus \lambda \otimes (2)$. We use the Euler characteristic and that $H_0=\Q$ to deduce $H_1$.
\end{proof}

\begin{table}[th]
\caption{$S_n\times S_2$-representations afforded by the rational homology of the ordered configuration space of two particles on the complete bipartite graph $K_{n,n}$ for $3\leq n\leq 8$.}
\label{table: H2Knn as SnxS2-reps}
\begin{tabular}{@{}cp{0.80\textwidth}c@{}} \toprule
$n$ & $H_2(\conf_2(K_{n,n});\Q)$ & Time (s) \\
\midrule
3  & $(3)\otimes (1,1)$ & $<1$ \\\midrule
4  & $(1,1,1,1)\otimes (1,1) + (2,1,1)\otimes (1,1) + 2(2,2)\otimes (1,1) + (3,1)\otimes (1,1) + 2(4)\otimes (1,1) + 2(2,1,1)\otimes (2) + 2(3,1)\otimes (2)$ & $2$ \\\midrule
5  & $(1,1,1,1,1)\otimes (1,1) + 2(2,1,1,1)\otimes (1,1) + 3(2,2,1)\otimes (1,1) + 2(3,1,1)\otimes (1,1) + 3(3,2) \otimes (1,1) + 2(4,1)\otimes (1,1) + 2(5) \otimes (1,1)  + 2(2,1,1,1)\otimes (2) + 2(2,2,1)\otimes (2) + 4(3,1,1)\otimes (2) + 2(3,2) \otimes (2) + 2(4,1)\otimes (2)$ & 18 \\\midrule
6  & $(2,1,1,1,1)\otimes (1,1) + (2,2,1,1)\otimes (1,1) + 2(2,2,2)\otimes (1,1) + 2(3,1,1,1)\otimes (1,1) + 4(3,2,1)\otimes (1,1) + (3,3)\otimes (1,1) + 2(4,1,1)\otimes (1,1) + 4(4,2)\otimes (1,1) + 2(5,1)\otimes (1,1) + 2(6)\otimes (1,1) + 
        2(2,2,1,1)\otimes (2) + 2(3,1,1,1)\otimes (2) + 4(3,2,1)\otimes (2) + 2(3,3)\otimes (2) + 4(4,1,1)\otimes (2) + 2(4,2)\otimes (2) + 2(5,1)\otimes (2)$ & 3083  \\\midrule
7  & $(3,1,1,1,1)\otimes (1,1) + (3,2,1,1)\otimes (1,1) + 2(3,2,2)\otimes (1,1) +(3,3,1)\otimes (1,1)+ 2(4,1,1,1)\otimes (1,1) + 4(4,2,1)\otimes (1,1) + 2(4,3)\otimes (1,1) + 2(5,1,1)\otimes (1,1) + 4(5,2)\otimes (1,1) + 2(6,1)\otimes (1,1) + 2(7)\otimes (1,1) + 
2(3,2,1,1)\otimes (2) + 2(3,3,1)\otimes (2) + 2(4,1,1,1)\otimes (2) + 4(4,2,1)\otimes (2) + 2(4,3)\otimes (2) + 4(5,1,1)\otimes (2) + 2(5,2)\otimes (2) + 2(6,1)\otimes (2)$ & 20304  \\\midrule
8 & $(4, 1, 1, 1, 1) \otimes (1, 1) +
(4, 2, 1, 1) \otimes (1, 1) +
2(4, 2, 2) \otimes (1, 1) +
(4, 3, 1) \otimes (1, 1) +
(4, 4) \otimes (1, 1) +
2(5, 1, 1, 1) \otimes (1, 1) +
4(5, 2, 1) \otimes (1, 1) +
2(5, 3) \otimes (1, 1) +
2(6, 1, 1) \otimes (1, 1) +
4(6, 2) \otimes (1, 1) +
2(7, 1) \otimes (1, 1) +
2(8) \otimes (1, 1) +
2(4, 2, 1, 1) \otimes (2) +
2(4, 3, 1) \otimes (2) +
2(5, 1, 1, 1) \otimes (2) +
4(5, 2, 1) \otimes (2) +
2(5, 3) \otimes (2) +
4(6, 1, 1) \otimes (2) +
2(6, 2) \otimes (2) +
2(7, 1) \otimes (2)$ & 738573 \\
\bottomrule
\end{tabular}

\end{table}


\begin{thm}
We have the following isomorphisms of $S_n$-representations, for $n\geq 4$,
\begin{align*}
H_2(\uconf_2(K_{n,n})) \cong & 2\mathrm{Sp}_n(2,1,1) \oplus 2\mathrm{Sp}_n(3,1) \oplus 2\mathrm{Sp}_n(1,1,1) \oplus 4\mathrm{Sp}_n(2,1)\oplus 2\mathrm{Sp}_n(3) \\
& \qquad \oplus 4\mathrm{Sp}_n(1,1) \oplus 2\mathrm{Sp}_n(2)\oplus 2\mathrm{Sp}_n(1), \quad n\geq 7\\
H_1(\uconf_2(K_{n,n}))\cong & \mathrm{Sp}_n(1,1)\oplus \mathrm{Sp}_n(2)\oplus \mathrm{Sp}_n(1)\oplus \mathrm{Sp}_n(\emptyset),\quad n\geq 4
\end{align*}
\end{thm}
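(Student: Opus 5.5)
The plan is to mirror the ordered case. The unordered homology is the $S_2$-coinvariants of the ordered homology. Over $\Q$, taking $S_2$-coinvariants is exact, and $\uconf_2$ is the free quotient of $\conf_2$. So $H_i(\uconf_2(K_{n,n})) \cong H_i(\conf_2(K_{n,n}))^{S_2}$, i.e. the $(2)$-isotypic part for the $S_2$-factor.

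\textbf{Step 1 ($H_2$).} For each $n$ in Table \ref{table: H2Knn as SnxS2-reps}, I would read off the summands of the form $\lambda\otimes(2)$. For example, at $n=8$ this gives
\[
2(4,2,1,1)+2(4,3,1)+2(5,1,1,1)+4(5,2,1)+2(5,3)+4(6,1,1)+2(6,2)+2(7,1).
\]
Removing the first row, these are $2\Sp_8(2,1,1)\oplus 2\Sp_8(3,1)\oplus 2\Sp_8(1,1,1)\oplus 4\Sp_8(2,1)\oplus 2\Sp_8(3)\oplus 4\Sp_8(1,1)\oplus 2\Sp_8(2)\oplus 2\Sp_8(1)$, which is the claimed formula. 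The same extraction at $n=7$ gives the same padded multiplicities, consistent with the claimed range $n\ge 7$.

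To certify stability from $n=8$ onward, I would invoke Theorem \ref{completebipartiteBound}: the unordered $H_2$ is multiplicity stable no later than $n=8$, and it agrees with the formula at $n=8$. Agreement at $n=7$ is read directly from the table. (If one prefers, the unordered chain modules listed in the lemma above can be used with Theorem \ref{relationBound}. They have stable degrees $4,3,2$, which gives the same bound.)

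\textbf{Step 2 ($H_1$).} I would use the $S_n$-equivariant Euler characteristic: $H_0-H_1+H_2=C_0-C_1+C_2$, with $H_0=\Q$ by Remark \ref{rmk: H0 is triv}, and $C_i$ the unordered chain modules from the lemma above. To obtain the stable answer for $n\ge 7$, I would expand each free module via Pieri:
\begin{itemize}
\item $M(a)=\ind_{S_a\times S_{n-a}}^{S_n}(\reg\otimes\triv)$,
\item $M((2))=\Sp_n(\emptyset)\oplus\Sp_n(1)\oplus\Sp_n(2)$,
\item $M((1,1))=\Sp_n(1)\oplus\Sp_n(1,1)$,
\item $M(2(2,1,1)\oplus2(3,1))$ by the Pieri rule, and similarly for the rest.
\end{itemize}
Then I would subtract, which should yield $\Sp_n(1,1)\oplus\Sp_n(2)\oplus\Sp_n(1)\oplus\Sp_n(\emptyset)$. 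Alternatively, and more cleanly, I would take $S_2$-invariants of the ordered answer. By Theorem \ref{thm: H1 of conf2 of Knn}, $H_1(\conf_2)\cong H_1(K_{n,n})\oplus H_1(K_{n,n})$ for $n\ge3$, with $S_2$ swapping the factors. The invariants are therefore one copy of $H_1(K_{n,n})$, i.e. half of the ordered $H_1$ from Theorem \ref{thm: ordered bipartite}. This also shows the formula holds down to $n\ge4$, where $H_1(K_{n,n})$, as the cycle space (edges minus vertices plus trivial), already has its stable decomposition.

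\textbf{Main obstacle.} The delicate point is the $H_2$ range below $8$. The bound from Theorem \ref{completebipartiteBound} only gives $n\ge 8$, and the table's $n=6$ row has $(2,2,1,1)\otimes(2)$ and $2(3,3)\otimes(2)$, which do not fit the padded pattern. So the claim for $n\ge7$ rests on the $n=7$ and $n=8$ computations plus the theoretical bound. The heading's ``$n\ge4$'' should be interpreted with $H_2$ governed by $n\ge7$.
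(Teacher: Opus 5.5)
Your treatment of $H_2$ is the paper's own argument: you read off the $\lambda\otimes(2)$ summands of Table \ref{table: H2Knn as SnxS2-reps} at $n=7,8$, and Theorem \ref{completebipartiteBound} covers all $n\ge 8$. For $H_1$ the paper uses only the equivariant Euler characteristic with $H_0=\Q$. Your first route is the same argument. As you set it up, though, it reaches only $n\ge 7$, because it uses Pieri expansions together with the stable form of $H_2$. The cases $4\le n\le 6$ would need the individual table rows plugged in.

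Your second route is genuinely different, and it is the better fit for the stated range $n\ge 4$. Theorem \ref{thm: H1 of conf2 of Knn}, combined with transfer, gives $H_1(\uconf_2(K_{n,n}))\cong H_1(K_{n,n})$ equivariantly for $n\ge 3$. Künneth introduces no sign here, since the swap pairs a degree-one class with a degree-zero class. The cycle-space computation then gives $M(2)-M(1)+\Sp_n(\emptyset)=\Sp_n(1,1)\oplus\Sp_n(2)\oplus\Sp_n(1)\oplus\Sp_n(\emptyset)$ for $n\ge 4$. This is because $C_1(K_{n,n})\cong M(1)\oplus M(2)$ and $C_0(K_{n,n})\cong 2M(1)$.

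What this buys you:
\begin{itemize}
\item a conceptual explanation of the answer;
\item the full range $n\ge 4$;
\item no dependence on the $H_2$ computations or on the stable-range bound.
\end{itemize}
The cost is reliance on the Farber--Hanbury theorem, which is specific to $K_n$ and $K_{n,n}$. The paper's Euler-characteristic route works uniformly for the crown and tripartite families as well. The paper itself notes, just after Theorem \ref{thm: H1 of conf2 of Knn}, that this shortcut exists.

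One correction to your last paragraph. In the $n=6$ row, $(2,2,1,1)=(2,1,1)[6]$ and $(3,3)=(3)[6]$ do fit the padded pattern. The $(2)$-part at $n=6$ is exactly the stable formula with the $2\Sp_n(3,1)$ term deleted. The range starts at $n=7$ simply because $(3,1)[n]$ is a partition only when $n\ge 7$.
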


\begin{proof}
In Table \ref{table: H2Knn as SnxS2-reps}, the isotypical component of $H_2(\uconf_2(K_{n,n}))$ indexed by $\lambda$ is $\lambda\otimes (2)$. We deduce $H_1$ from the Euler characteristic and that $H_0=\Q$.
\end{proof}

\paragraph{\textit{Combinatorial interpretations}} We give a full analysis of $H_2(\conf_2(K_{4,4}))$ as an example. For general $K_{n,n}$, we describe the submodule generated by toric classes.

\begin{example}
We know that \[H_2(\conf_2(K_{4,4}))\cong (1,1,1,1)+3(2,1,1)+2(2,2)+3(3,1)+2(4).\] If $G\subseteq K_{4,4}$ is a subgraph, then the inclusion map induces a map $H_2(\conf_2(G))\to H_2(\conf_2(K_{4,4}))$. The $S_4$-orbit of the image generates a subrepresentation. Next, we give a list of subgraphs of $K_{4,4}$ that induces all subrepresentations.
\begin{enumerate}
    \item The disjoint union of 4-cycles $(1\bar12\bar2)\sqcup(3\bar34\bar4)\hookrightarrow K_{4,4}$ induces a toric submodule isomorphic to $(2,2)+(3,1)+(4)$. Similarly, $(1\bar23\bar4)\sqcup(\bar12\bar34)\hookrightarrow K_{4,4}$ induces an isomorphic submodule, and their intersection is trivial.
    \item The identity map $K_{3,3}\hookrightarrow K_{4,4}$ induces a submodule isomorphic to $\mathbf{(3,1)} + (4)$. The boldfaced part intersects the union of previous modules trivially.
    \item The map $K_{3,3}\hookrightarrow K_{4,4}$ that is identity on $\{1,2,3\}$ and sending $\bar1\mapsto \bar2,\bar2\mapsto \bar3,\bar3\mapsto \bar4$ induces a submodule isomorphic to $\mathbf{(2, 1, 1)} + (2, 2) + (3, 1) + (4)$.
    \item The inclusion $K_{3,4}\hookrightarrow K_{4,4}$ induces a submodule isomorphic to $\mathbf{(2, 1, 1)} + (2,1,1)+(2, 2) + 2(3, 1) + (4)$.
    \item Finally, the graph $K_{4,4}\setminus\{\{4,\bar3\},\{4,\bar4\}\}\hookrightarrow K_{4,4}$ induces the full representation $H_2(\conf_2(K_{4,4}))$.
\end{enumerate}
In particular, this means no $H_2$-class of $\conf_2(K_{4,4})$ uses all its edges.
\end{example}

As we saw in the example, there are two toric submodules. Denote them by $M_1^n$ and $M_2^n$.

\begin{prop}\label{prop: triv in Knn}
The submodule $M_i^n$ is isomorphic to $M((2,2)\oplus(3,1)\oplus(4))$ for $n\geq 4$. Furthermore, $M_1^n\cap M_2^n = \{0\}$ for $n\leq 7$ and $M_1^n\cap M_2^n \cong \Sp_n(4)$ for $n\geq 8$.
\end{prop}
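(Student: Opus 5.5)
We first fix what $M_1^n$ and $M_2^n$ are. For $n\ge 4$, $M_1^n$ is the $S_n$-span of the toric class $\tau_1=(1\bar12\bar2)\sqcup(3\bar34\bar4)$, and $M_2^n$ is the $S_n$-span of $\tau_2=(1\bar23\bar4)\sqcup(\bar12\bar34)$. Both are images of the FI-module map from the free module on the cyclic $S_4$-module generated by $\tau_i$ in $H_2(\conf_2(K_{4,4}))$. Explicitly, for every injection $f:[4]\hookrightarrow[n]$ the map sends $f\otimes\tau_i$ to $f_*\tau_i$.

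\textbf{Step 1: the source of the map.} From the Example, $\mathbb{Q}[S_4]\tau_i\cong(2,2)+(3,1)+(4)$. Hence the source is $M((2,2)\oplus(3,1)\oplus(4))$, and the first claim says this map is injective for every $n\ge4$.

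\textbf{Step 2: injectivity.} The plan is to detect toric classes by pairing with cochains. A $2$-cycle in $D_2(K_{n,n})$ is an honest chain, since there are no $3$-cells and hence no boundaries in degree $2$. So $H_2=Z_2\subseteq C_2$, and injectivity is a statement about chains. The support of $f_*\tau_1$ is the set of cells $e\times e'$ with $e,e'$ edges of the two $4$-cycles on vertex labels $f(1),\dots,f(4)$. From this support one can read off the ordered pair of unordered pairs $(\{f(1),f(2)\},\{f(3),f(4)\})$, and for $\tau_1$ the edges $i\bar i$ record the labels on the nose.

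We then check that the distinct chains $\{f_*\tau_1\}$, taken up to the stabilizer symmetries (which give signs), are linearly independent. To do this, choose for each orbit representative a ``private'' cell, such as $f(1)\overline{f(1)}\times f(3)\overline{f(3)}$, which lies in the support of that cycle and of no other cycle after symmetrization. This gives a triangular system, so the dimension of the span equals $\dim M(W)_n=\binom n4\cdot 6$.

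The same argument works for $M_2^n$. There a private cell is, for instance, $f(1)\overline{f(2)}\times \overline{f(1)}f(2)$, adjusted for edge orientation. These cells use only \emph{off-diagonal} edges, which is the key combinatorial difference between the two types.

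\textbf{Step 3: the intersection.} We compute $M_1^n\cap M_2^n$ as the kernel of $M(W)\oplus M(W)\to H_2$. This kernel is an FI-submodule of a free module, so its multiplicities are governed by Theorem \ref{relationBound}-type bounds: they stabilize once $n\ge 8$, because the cycles involved use at most $8$ distinct labels. We take the following approach.

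First, for $n\le7$ we verify triviality by the computer rank computation already implemented: compare $\dim(M_1^n+M_2^n)$ with $2\cdot6\binom n4$ for $n=4,\dots,7$.

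Second, for $n\ge8$ we exhibit the relation conceptually. Consider the $S_n$-invariant sums
\[
\Sigma_1=\sum_{\sigma}\sigma_*\tau_1,\qquad \Sigma_2=\sum_{\sigma}\sigma_*\tau_2 .
\]
The claim is that they are proportional once there is enough room, which gives the trivial summand $\Sp_n(4)=\Sp_n(\emptyset[\,\cdot\,])$ lying in the intersection. To prove it, expand both sums in the cell basis. In each sum, the coefficient of a cell $a\bar b\times c\bar d$ depends only on the combinatorial type of the cell, so only seven types occur. Comparing these coefficients, weighted by the number of disjoint $4$-cycle pairs through the given edges, should show that both sums equal multiples of the same cycle on the types with four distinct letters. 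At $n\ge8$ every type of cell admits completions in both families, and we expect this to force the proportionality.

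Third, to show the intersection is \emph{exactly} $\Sp_n(4)$ we use that its multiplicities are constant for $n\ge 8$. It therefore suffices to compute the rank at $n=8$, where $\dim(M_1^8+M_2^8)=2\cdot 6\binom84-1$. Combined with the Step 2 identification, we can also do this isotypic component by component to identify the kernel as trivial.

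The main obstacle is Step 3. Establishing a stable range for the kernel requires either a careful FI$\sharp$-type argument or an appeal to Theorem \ref{relationBound}, and the explicit invariant relation at $n\ge 8$ is delicate bookkeeping. If the hand argument fails, the fallback is the $n=8$ computation together with the stable-range bound.
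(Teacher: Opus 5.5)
\textbf{There is a genuine gap in Step 3: you have misread $\Sp_n(4)$ as the trivial representation.} In this paper $\Sp_n(\lambda)=\Sp(\lambda[n])$. So $\Sp_n(4)=\Sp((n-4,4))$, which is exactly why a nonzero intersection can only appear from $n=8$ on. The trivial representation is $\Sp_n(\emptyset)=\Sp((n))$.

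The relation you propose to prove is false. Write $z_{ab}$ for the $4$-cycle $a\bar a b\bar b$, so $M_1^n$ is spanned by the $z_{ab}\times z_{cd}$. Write $y_{A,B}$ for the $4$-cycle on $A\cup\bar B$, so $M_2^n$ is spanned by the $y_{A,B}\times y_{B,A}$.
\begin{itemize}
\item Every $z_{ik}$ contains the edge $i\bar i$ with coefficient $+1$, since the $S_n$-action never reverses edges. Hence $\Sigma_1$ has coefficient a positive multiple of $(n-2)(n-3)$ on the cell $i\bar i\times j\bar j$.
\item $\Sigma_2$ is supported on cells built from off-diagonal edges only, which is the very observation you make in Step 2. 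Also $\Sigma_2\neq 0$.
\end{itemize}
So $\Sigma_1,\Sigma_2$ are linearly independent. The trivial isotypic components of $M_1^n$ and $M_2^n$ meet in $0$, and together they give the two copies of $\Sp_n(\emptyset)$ in $H_2$. Likewise, the rank you would find at $n=8$ is $840-\dim\Sp((4,4))=840-14=826$, not $839$.

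\textbf{The correct reason the intersection is nonzero is a multiplicity count.} By the Pieri rule, $M((2,2)\oplus(3,1)\oplus(4))_n$ contains $\Sp_n(4)$ exactly once for $n\ge 8$, coming only from the summand $(4)$. By Theorem \ref{thm: ordered bipartite}, $H_2(\conf_2(K_{n,n}))$ also contains $\Sp_n(4)$ only once. Hence the copies in $M_1^n$ and $M_2^n$ must coincide.

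Comparing $2M(W)_n$ with the stable decomposition of $H_2$, $\Sp_n(4)$ is the only isotypic component where such an overlap is forced. That nothing else overlaps is not forced and has to be checked. This is what the paper does, by computer, up to the point where stability begins. Your ``at most $8$ labels'' heuristic can be made rigorous. Since $H_2=Z_2\subseteq C_2$ and each $M(W)\to M_i^n$ is injective, $M_1^n\cap M_2^n$ is the homology of the complex $0\to M(W)^{\oplus 2}\to C_2$ of free modules. Theorem \ref{relationBound} then gives stability for $n\ge 4+4=8$. So your computational fallback is the paper's proof, but only once the target is corrected to $\Sp((n-4,4))$.

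\textbf{Step 2 has a fixable slip: neither cell you name is private.}
\begin{itemize}
\item $a\bar a\times c\bar c$ lies in $z_{ab'}\times z_{cd'}$ for every admissible $b',d'$; there are already two such chains when $n=4$.
\item $a\bar b\times b\bar a$ lies in $y_{A',B'}\times y_{B',A'}$ whenever $a\in A'$ and $b\in B'$.
\end{itemize}
Use instead $a\bar b\times c\bar d$ for $z_{ab}\times z_{cd}$, and $a\bar b\times d\bar c$ for $y_{\{a,c\},\{b,d\}}\times y_{\{b,d\},\{a,c\}}$. Each of these lies in the support of exactly one spanning chain.

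With this fix, Step 2 is a complete proof that $M_i^n\cong M(W)_n$ for all $n\ge 4$. That is more than the paper's argument gives: the remark that $S_{\{5,\dots,n\}}$ acts trivially on $M_i^4$ only yields a surjection $M(W)\to M_i^n$.
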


\begin{proof}
We know that $M_i^4$ is isomorphic to $(2,2)\oplus(3,1)\oplus(4)$. To obtain the expression for $M_i^n$, we notice that for $n>4$, the subgroup $S_{\{5,\dots,n\}}$ acts trivially on $M_i^4\subset M_i^n$. We further notice that $M((2,2)\oplus(3,1)\oplus(4))$ is multiplicity stable at $n=8$. Therefore, we check their intersections explicitly using software until stability begins.
\end{proof}

\begin{rmk}
These two toric submodules do not account for all toric classes in $H_2$. For example, for $K_{5,5}$, a toric class is given by $(1\bar12\bar23\bar3)\sqcup(4\bar45\bar5)$. The submodule $N$ generated by this toric class and its $S_5$-images is different from $M_1^5\sqcup M_2^5$: 
\begin{align*}
    N &\cong (2, 1, 1, 1) + (3, 1, 1) + (3, 2) + (4, 1) + (5)\\
    M_{1}^5\cap N &\cong (3, 2) + (4, 1) + (5)\\
    M_2^5\cap N &= 0
\end{align*} The fact that $M_{1}^5\cap N$ is nontrivial can be seen from that the cycles $(1\bar12\bar2),(2\bar23\bar3),(1\bar13\bar3)$ which appear in $M_1^5$ and the cycles $(1\bar12\bar23\bar3),(1\bar13\bar32\bar2)$ which appear in $M_2^5$ use the exact same edges counting multiplicity.
\end{rmk}

\subsection{Crown graphs}

\paragraph{\textit{Computing the bounds}}
Let $W_n$ be the crown graph on $2n$ vertices. The graph $W_n$ has vertex set $\{1,\dots,n\}\cup \{\Bar{1},\dots,\Bar{n}\}$ and edge set $\{\{i,\Bar{j}\}: i\neq j\}$. We orient edges from $i\to\bar j$. The graph $W_n$ is isomorphic to a complete bipartite graph $K_{n,n}$ with a perfect matching removed. Observe that, essentially because of this description, the chain modules of $D_2(W_n)$ form a submodule of the chain modules of the complete bipartite graph. In fact, it is not hard to see that this submodule respects all of the extra FI$\sharp$-structure described in the prior section. Our stable range bounds can now be deduced from this as follows.

\begin{thm} \label{crownBound}
Let $W_n$ denote the crown graph on $2n$ vertices. Then the FI-modules $H_1(\conf_2(W_n))$ and $H_2(\conf_2(W_n))$ are multiplicity stable no later than $n = 7$ and $n = 8$, respectively. The same holds for the unordered configuration spaces.
\end{thm}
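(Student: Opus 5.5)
The argument mirrors the one behind Theorem \ref{completebipartiteBound}, and the main work is verifying that the crown graph chains inherit the structure used there. Since $W_n$ is simple, the remark following Theorem \ref{AbramsModel} shows no subdivision is needed for two particles. Hence $\conf_2(W_n)\simeq D_2(W_n)$ equivariantly, and the homology of $\conf_2(W_n)$ is the homology of the cellular chain complex $C_2 \to C_1 \to C_0$ of $D_2(W_n)$, viewed as a complex of FI-modules.

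The first step is to show each $C_i(D_2(W_\bullet))$ is a free FI-module. I would realize it as the FI$\sharp$-submodule of $C_i(D_2(K_{\bullet,\bullet}))$ spanned by products of cells that avoid every matching edge $\{i,\bar i\}$. The FI$\sharp$ action on the bipartite chains sends a cell to zero if it uses an index outside the true domain. Otherwise it relabels indices injectively, and an injective relabeling never turns a non-matching edge $\{i,\bar j\}$ with $i\ne j$ into a matching edge. So the span of crown cells is closed under the FI$\sharp$ action, and it is finitely generated since it is spanned by finitely many combinatorial types. Theorem \ref{sharpisfree} then makes each $C_i$ free.

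The second step is to bound stable degrees. The crown cells are obtained by deleting from the bipartite combinatorial types those containing a matching edge. The types that survive, such as $i\bar j\times k\bar l$ with $i\ne j$, $k\ne l$, or $i\bar j \times j\bar k$, involve at most as many indices as before. Thus $s(C_2)\le 4$, $s(C_1)\le 3$ and $s(C_0)\le 2$; equivalently, the dimensions are bounded by polynomials of those degrees, as in the remark on stable degree.

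The final step plugs these into Theorem \ref{relationBound}. For $H_2$, take the complex $0\to C_2\to C_1$, so the bound is $\max\{s(C_3),s(C_2)\}+s(C_2)\le 0+4+4=8$ with $C_3=0$. For $H_1$, the bound is $\max\{s(C_2),s(C_1)\}+s(C_1)\le 4+3=7$. These give stability for $n\ge 8$ and $n\ge 7$ respectively.

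For the unordered case, $H_i(\uconf_2(W_n))$ is the $S_2$-coinvariants of $H_i(\conf_2(W_n))$. The $S_2$-action commutes with the FI-structure, and taking coinvariants is exact in characteristic $0$. Equivalently, one can run the same argument on the free chain modules of $D^U_2(W_n)$, which are quotients by the swap and inherit the FI$\sharp$ structure, with the same stable degree bounds.

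The only point needing care is the closure check in the first step: that the FI$\sharp$ action on bipartite chains preserves the crown subcomplex and the sign conventions coming from the edge orientations $i\to\bar j$. Once that is settled, the rest is bookkeeping of index counts.
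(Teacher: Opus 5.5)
Your proposal is correct and follows the paper's own argument. The chain modules of $D_2(W_n)$ sit inside those of $D_2(K_{n,n})$ as FI$\sharp$-submodules (closure holds because an injective relabeling never produces a matching edge $\{i,\bar i\}$), so they are free with $s(C_2)=4$, $s(C_1)=3$, $s(C_0)=2$, and Theorem \ref{relationBound} then yields the bounds $8$ and $7$. For the unordered statement, it is your second route that actually gives the range, namely applying the same bound to the free coinvariant chain modules of $D^U_2(W_n)$: exactness of $S_2$-coinvariants by itself does not transfer an $S_n$-stability range from the ordered homology to its $S_2$-invariant part.
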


\paragraph{\textit{$S_n$-equivariant chain modules}} 
The chain modules of $D_2(W_n)$ and $D^U_2(W_n)$ are submodules of those of $K_{n,n}$ whose generators do not use an edge of the form ${i\bar i}$. We summarize the representation structures below.

\begin{lem}
    The representations afforded by the chain modules of $D_2(W_n)$ are
    \begin{align*}
        C_2(D_2(W_n)) &= M(2)\oplus 2M(3) \oplus M(4),\quad n\geq 4\\
        C_1(D_2(W_n)) &= 4M(2)\oplus 4M(3),\quad n\geq 3\\
        C_0(D_2(W_n)) &= 2M(1)\oplus 4M(2),\quad n\geq 2
    \end{align*}
\end{lem}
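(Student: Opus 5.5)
The plan is to imitate the proof of the corresponding lemma for $K_{n,n}$. The chain modules of $D_2(W_n)$ are the submodules of those of $D_2(K_{n,n})$ spanned by cells that avoid every edge of the form $i\bar i$. The crown graph has the same vertex set as $K_{n,n}$ and the same $S_n$-action, so $C_0$ is unchanged and $C_0(D_2(W_n))\cong 2M(1)\oplus 4M(2)$ follows directly from the bipartite computation. For $C_1$ and $C_2$ I will again sort cells into $S_n$-orbits, or combinatorial types. Each type has the form $\sigma_1\times\sigma_2$ and is labelled by $a$ distinct letters from $[n]$, placed in fixed barred or unbarred positions.

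First I check that every type contributes exactly a copy of $M(a)$. The $S_n$-action preserves the orientation $i\to\bar j$, so no group element sends a cell to its negative. A cell is determined by the injection $[a]\to[n]$ recording its letters, and a permutation fixing the cell must fix all of these letters. Hence each type is a free orbit of injections, and its span is $\Q[\mathrm{Hom}([a],[n])]=M(a)$, exactly as for $K_{n,n}$. The stated lower bounds on $n$ are simply what is needed for the largest type to be nonempty.

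Next comes the enumeration. For $C_2$, consider an ordered pair of edges ${a\bar b}\times{c\bar d}$. Crown edges require $a\neq b$ and $c\neq d$, and disjointness of the two cells requires $a\neq c$ and $b\neq d$. The only possible coincidences are therefore $a=d$ and $b=c$. This gives four types:
\[
{i\bar j}\times{k\bar l}\ (a=4),\quad {i\bar j}\times{j\bar k}\ (a=3),\quad {j\bar k}\times{i\bar j}\ (a=3),\quad {i\bar j}\times{j\bar i}\ (a=2).
\]
Summing their contributions yields $M(2)\oplus 2M(3)\oplus M(4)$. Equivalently, these are the seven $K_{n,n}$ types with the three types using some $i\bar i$ deleted.

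For $C_1$, take first a cell of the form ${a\bar b}\times v$ with $a\neq b$ and $v\notin\{a,\bar b\}$. If $v$ is unbarred, then either $v=b$, giving type ${i\bar j}\times j$, or $v$ is a new letter, giving ${i\bar j}\times k$. If $v$ is barred, then either $v=\bar a$, giving ${i\bar j}\times\bar i$, or $v$ is new, giving ${i\bar j}\times\bar k$. These four types contribute $2M(2)\oplus 2M(3)$. The cells with the vertex in the first factor give an identical count, so $C_1\cong 4M(2)\oplus 4M(3)$.

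The main point needing care is completeness and non-overlap of the enumeration, especially in $C_2$. There one must make sure that the types ${i\bar j}\times{j\bar k}$ and ${j\bar k}\times{i\bar j}$ really are distinct orbits. They are distinct because the factor order is fixed in the ordered configuration space, and no element of $S_n$ interchanges barred and unbarred vertices. As a sanity check I would verify the dimensions against a direct count. For instance, $\dim C_2$ should equal
\[
n(n-1)\bigl(n(n-1)-2(n-1)+1\bigr),
\]
the number of ordered pairs of vertex-disjoint edges of $W_n$, and this agrees with $\binom{n}{2}2!+2\binom{n}{3}3!+\binom{n}{4}4!$.
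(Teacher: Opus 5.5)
Your proposal is correct and takes essentially the paper's route. The paper's only argument is that the chain modules of $D_2(W_n)$ are the submodules of those of $D_2(K_{n,n})$ spanned by cells avoiding every edge $i\bar i$, so one deletes the $K_{n,n}$ combinatorial types that use such an edge; your coincidence analysis, your free-orbit argument that each type contributes $M(a)$, and your dimension check $n(n-1)(n^2-3n+3)$ spell out what the paper leaves implicit.
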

\begin{lem}
The representations afforded by the chain modules of $D^U_2(W_n)$ are
\begin{align*}
C_2(D^U_2(W_n)) &= M((1,1))\oplus M(3) \oplus M(2(2,1,1)\oplus 2(3,1)),\quad n\geq 4\\
C_1(D^U_2(W_n)) &= 2M(2)\oplus 2M(3),\quad n\geq 3\\
C_0(D^U_2(W_n)) &= M(1)\oplus 2M((2))\oplus M(2),\quad n\geq 2
\end{align*}
\end{lem}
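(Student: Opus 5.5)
The plan is to repeat the orbit-counting argument used for the unordered chain modules of $D^U_2(K_{n,n})$, now discarding every cell that involves an edge of the form $i\bar i$. The general principle is the one already used for $K_{n,n}$. Each $S_n$-orbit of unordered cells (a ``combinatorial type'') uses some number $a$ of distinct letters from $[n]$. The span of that orbit is then $M(W)$, where $W$ is the $S_a$-representation spanned by the cells of that type supported on $[a]$. Concretely, $W=\ind_{H}^{S_a}\chi$, where $H$ is the stabilizer of one cell up to sign and $\chi$ records the sign (from the relations $e_1\times e_2=-e_2\times e_1$, $e\times v=-v\times e$, $v_1\times v_2=v_2\times v_1$) by which $H$ acts. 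If $H$ is trivial this gives $M(a)$. As in the earlier lemmas, for $n>a$ the subgroup $S_{\{a+1,\dots,n\}}$ fixes these cells, so the orbit span is $M(W)$ once $n\geq a$. The thresholds $n\geq 4,3,2$ are just the maximal numbers of letters in each degree.

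For $C_2$, I would write an unordered pair of disjoint edges as $i\bar j\times k\bar l$ with $i\neq j$, $k\neq l$ (crown condition), $i\neq k$, $j\neq l$ (disjointness). The only possible coincidences are $j=k$ and/or $i=l$, which gives three types.
\begin{enumerate}
\item Both coincidences give $i\bar j\times j\bar i$, with two letters. The transposition $(ij)$ swaps the two factors, hence acts by $-1$, giving $M((1,1))$.
\item Exactly one coincidence gives $i\bar j\times j\bar k$. The pattern $i\bar j\times k\bar i$ is the same type after swapping factors. The stabilizer is trivial, so this gives $M(3)$.
\item No coincidence gives four letters. The stabilizer is generated by $(ik)(jl)$, which swaps the factors and so acts by $-1$. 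Hence $W=\ind_{\langle (13)(24)\rangle}^{S_4}(\text{sign})$, which has dimension $12$.
\end{enumerate}
This last $W$ is the same as the one appearing in the $K_{n,n}$ lemma, so we may quote its decomposition $2(2,1,1)\oplus 2(3,1)$. Alternatively, it can be checked directly by Frobenius reciprocity: count the irreducibles of $S_4$ on which $(13)(24)$ has a $-1$ eigenspace. The type $i\bar i\times j\bar j$ and the type $i\bar j\times k\bar k$ from the $K_{n,n}$ list disappear.

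For $C_1$, I would normalize via $e\times v=-v\times e$ so that the edge comes first. The types are $i\bar j\times\bar i$, $i\bar j\times j$, $i\bar j\times k$ and $i\bar j\times\bar k$. Each has trivial stabilizer, since the edge $i\bar j$ determines $i$ and $j$. So they give $2M(2)\oplus 2M(3)$. For $C_0$ the vertex set is unchanged from $K_{n,n}$, so $C_0(D^U_2(W_n))=C_0(D^U_2(K_{n,n}))$, which has the following types.
\begin{enumerate}
\item $\{i,\bar i\}$ gives $M(1)$.
\item $\{i,j\}$ gives $M((2))$.
\item $\{\bar i,\bar j\}$ gives $M((2))$.
\item $\{i,\bar j\}$ gives $M(2)$.
\end{enumerate}
Here the swap $(ij)$ fixes the unordered vertex pairs with a plus sign, which is why the two middle types are trivial-induced.

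The only step needing care is the sign bookkeeping in the four-letter $C_2$ type, and making sure no orbits are double counted when factors are swapped. I would verify the count by comparing total dimensions with the number of unordered pairs of disjoint edges of $W_n$.
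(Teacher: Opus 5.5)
Your proposal is correct and follows the paper's route. The paper obtains these modules by taking the combinatorial types from the $D^U_2(K_{n,n})$ lemma and discarding every type that uses an edge $i\bar i$ (namely $i\bar i\times j\bar j$, $i\bar j\times k\bar k$, $i\bar i\times j$, $i\bar i\times\bar j$), which is exactly your enumeration; your explicit stabilizer-and-sign computation for the four-letter type and the dimension check ($6+24+12=42$ cells at $n=4$) simply make that implicit step explicit.
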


\paragraph{\textit{Stable homology}}
\begin{thm}\label{thm: ordered crown}
We have the following isomorphisms of $S_n$-representations
\begin{align*}
H_2(\conf_2(W_n)) \cong &\mathrm{Sp}_n(1,1,1,1)\oplus 3\mathrm{Sp}_n(2,1,1) \oplus 2\mathrm{Sp}_n(2,2) \oplus 3\mathrm{Sp}_n(3,1) \oplus \mathrm{Sp}_n(4) \\
&\qquad \oplus 2\mathrm{Sp}_n(1,1,1) \oplus 4\mathrm{Sp}_n(2,1) \oplus 2\mathrm{Sp}_n(3) \oplus 3\mathrm{Sp}_n(1,1) \oplus 3\mathrm{Sp}_n(2) \\
&\qquad \oplus 2\mathrm{Sp}_n(1) \oplus \mathrm{Sp}_n(\emptyset),\quad n\geq 8\\
H_1(\conf_2(W_n))\cong & 2\mathrm{Sp}_n(1, 1) \oplus 2\mathrm{Sp}_n(2),\quad n\geq 5
\end{align*}
\end{thm}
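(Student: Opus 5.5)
The plan follows the template already used for $K_n$ and $K_{n,n}$: a finite computer computation, with Theorem \ref{crownBound} guaranteeing that this finite range suffices. By Theorem \ref{crownBound}, $H_2(\conf_2(W_n))$ is multiplicity stable for $n\geq 8$ and $H_1(\conf_2(W_n))$ for $n \geq 7$. That bound comes from Theorem \ref{relationBound} applied to the free chain modules of $D_2(W_n)$ described in the preceding lemma, which have $s(C_2)=4$, $s(C_1)=3$, $s(C_0)=2$. Since $W_n$ is simple and $k=2$, Theorem \ref{AbramsModel} gives $D_2(W_n)\simeq \conf_2(W_n)$ equivariantly without subdivision. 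So it suffices to determine $H_2(D_2(W_n))$ as an $S_n$-representation for $n\le 8$ and read off the stable multiplicities at $n=8$.

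For $H_2$ we compute the kernel of the top differential $d\colon C_2\to C_1$. Here $\partial(e\times e') = \partial e\times e' - e\times\partial e'$, exactly as in the $K_4$ example. We then decompose $\ker d$ into $S_n\times S_2$-isotypic components using character projections, as in the combinatorial-interpretation proofs. For each $\lambda$, the $S_n$-multiplicity is the sum of the multiplicities of $\lambda\otimes(2)$ and $\lambda\otimes(1,1)$. To get the displayed padded form we check that at $n=8$ every constituent is of the form $\mu[8]$ with $|\mu|\le 4$. We expect, as for $K_{n,n}$, that the sizes $|\mu|=4,3,2,1,0$ appear, with the listed multiplicities. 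We also verify that the multiplicities agree between $n=7$ and $n=8$ where the padded partitions make sense; this is only a sanity check, since the theorem itself is what guarantees stability from $n=8$ onward.

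For $H_1$ we avoid a separate rank computation and use the equivariant Euler characteristic, $H_0 - H_1 + H_2 = C_0 - C_1 + C_2$. Here $H_0=\Q$ is trivial by Remark \ref{rmk: H0 is triv}, since $W_n$ is connected with vertices of degree $\geq 3$ once $n\ge 4$. The $C_i$ are the free modules of the lemma, namely $M(2)\oplus 2M(3)\oplus M(4)$, $4M(2)\oplus 4M(3)$, and $2M(1)\oplus 4M(2)$. Their decompositions follow from $M(W)_n\cong \ind_{S_m\times S_{n-m}}^{S_n}(W\otimes\Q)$ and the Pieri rule. Solving gives $H_1$ for each $n\ge 8$ uniformly in padded notation. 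The difference $C_0 - C_1 + C_2$ is then a stable virtual representation minus the stable $H_2$, yielding $2\Sp_n(1,1)\oplus 2\Sp_n(2)$. To get the sharper range $n\geq 5$, we carry out the same Euler characteristic computation at $n=5,6,7$ using the computed $H_2$ tables. Because $H_2$ is not yet stable in that range, we check directly that the resulting $H_1$ is already $2\Sp_n(1,1)\oplus 2\Sp_n(2)$ there.

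The main obstacle is computational cost at $n=8$. The dimension of $C_2$ is $\binom{8}{2}\cdot 2 + \binom{8}{3}\cdot 12 + \binom{8}{4}\cdot 24$, which is in the low thousands, and $C_1$ has similar size. An isotypic decomposition of the kernel over all of $S_8\times S_2$ is expensive; the analogous $K_{8,8}$ run took roughly $7\times 10^5$ seconds. To mitigate this, I would first try computing the character of $\ker d$ via traces on conjugacy-class representatives of $S_8\times S_2$ acting on a kernel basis, rather than projecting the full kernel. If that is still too slow, I would compute $\dim$ of the $\lambda$-isotypic part by restricting $d$ to the image of each Young symmetrizer. Since $W_n\subset K_{n,n}$ compatibly with the FI$\sharp$-structure, the existing $K_{n,n}$ code applies after deleting the chains that involve edges $\{i,\bar i\}$.
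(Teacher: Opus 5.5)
Your proposal is correct and follows the paper's argument. The paper likewise takes the stable ranges from Theorem \ref{crownBound}, computes $H_2(\conf_2(W_n))$ by computer as an $S_n\times S_2$-representation up to $n=8$ (Table \ref{table: H2Crown with SnxS2}) and sums the $(2)$- and $(1,1)$-isotypic pieces, then recovers $H_1$ from the equivariant Euler characteristic with $H_0=\Q$. Your explicit Euler-characteristic check at $n=5,6,7$, which brings the $H_1$ range down from the bound $n\geq 7$ to the stated $n\geq 5$, is what the paper's ``analogous to $K_{n,n}$'' proof implicitly relies on.
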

\begin{proof}
    The proof is analogous to the case of $K_{n,n}$.
\end{proof}
\begin{table}[th]
\caption{$S_n\times S_2$-representations afforded by the rational homology of the ordered configuration space of two particles on the crown graph $W_{n}$ for $3\leq n\leq 8$.}
\label{table: H2Crown with SnxS2}
\begin{tabular}{@{}cp{0.80\textwidth}c@{}} \toprule
$n$ & $H_2(\conf_2(W_{n});\Q)$ & Time (s) \\
\midrule
3  & $0$ & $<1$ \\\midrule
4  & $(2,2)\otimes(1,1)+(4)\otimes(1,1)+(3,1)\otimes(2)$ & $<1$ \\\midrule
5  & $(1, 1, 1, 1, 1)\otimes (1, 1)+(2, 1, 1, 1)\otimes (1, 1)+(2, 2, 1)\otimes (1, 1)+(3, 1, 1)\otimes (1, 1)+(3, 2)\otimes (1, 1)+(4, 1)\otimes (1, 1)+(5)\otimes (1, 1)+(2, 1, 1, 1)\otimes (2)+2(3, 1, 1)\otimes (2)+(3, 2)\otimes (2)+(4, 1)\otimes (2)$ & 2 \\\midrule
6  & $(2, 1, 1, 1, 1) \otimes (1, 1)+
(2, 2, 1, 1) \otimes (1, 1)+
2(2, 2, 2) \otimes (1, 1)+
(3, 1, 1, 1) \otimes (1, 1)+
2(3, 2, 1) \otimes (1, 1)+
(4, 1, 1) \otimes (1, 1)+
2(4, 2) \otimes (1, 1)+
(5, 1) \otimes (1, 1)+
(6) \otimes (1, 1)+
2(2, 2, 1, 1) \otimes (2)+
(3, 1, 1, 1) \otimes (2)+
2(3, 2, 1) \otimes (2)+
(3, 3) \otimes (2)+
2(4, 1, 1) \otimes (2)+
(4, 2) \otimes (2)+
(5, 1) \otimes (2)$ & 134  \\\midrule
7  & $(3, 1, 1, 1, 1)\otimes (1, 1) +
(3, 2, 1, 1)\otimes (1, 1) +
2(3, 2, 2)\otimes (1, 1) +
(3, 3, 1)\otimes (1, 1) +
(4, 1, 1, 1)\otimes (1, 1) +
2(4, 2, 1)\otimes (1, 1) +
(4, 3)\otimes (1, 1) +
(5, 1, 1)\otimes (1, 1) +
2(5, 2)\otimes (1, 1) +
(6, 1)\otimes (1, 1) +
(7)\otimes (1, 1) +
2(3, 2, 1, 1)\otimes (2) +
2(3, 3, 1)\otimes (2) +
(4, 1, 1, 1)\otimes (2) +
2(4, 2, 1)\otimes (2) +
(4, 3)\otimes (2) +
2(5, 1, 1)\otimes (2) +
(5, 2)\otimes (2) +
(6, 1)\otimes (2)$  & 8963  \\\midrule
8 & $(4, 1, 1, 1, 1) \otimes (1, 1) + 
(4, 2, 1, 1) \otimes (1, 1) + 
2(4, 2, 2) \otimes (1, 1) + 
(4, 3, 1) \otimes (1, 1) + 
(4, 4) \otimes (1, 1) + 
(5, 1, 1, 1) \otimes (1, 1) + 
2(5, 2, 1) \otimes (1, 1) + 
(5, 3) \otimes (1, 1) + 
(6, 1, 1) \otimes (1, 1) + 
2(6, 2) \otimes (1, 1) + 
(7, 1) \otimes (1, 1) + 
(8) \otimes (1, 1) + 
2(4, 2, 1, 1) \otimes (2) + 
2(4, 3, 1) \otimes (2) + 
(5, 1, 1, 1) \otimes (2) + 
2(5, 2, 1) \otimes (2) + 
(5, 3) \otimes (2) + 
2(6, 1, 1) \otimes (2) + 
(6, 2) \otimes (2) + 
(7, 1) \otimes (2)$  & 203585  \\
\bottomrule
\end{tabular}

\end{table}

\begin{thm}
We have the following isomorphisms of $S_n$-representations
\begin{align*}
H_2(\uconf_2(W_n)) \cong & 2\mathrm{Sp}_n(2,1,1) \oplus 2\mathrm{Sp}_n(3,1) \oplus \mathrm{Sp}_n(1,1,1) \oplus 2\mathrm{Sp}_n(2,1)\oplus \mathrm{Sp}_n(3) \\
& \qquad \oplus 2\mathrm{Sp}_n(1,1) \oplus \mathrm{Sp}_n(2)\oplus \mathrm{Sp}_n(1),\quad n\geq 7.
\\
H_1(\uconf_2(W_n))\cong & \mathrm{Sp}_n(1, 1) \oplus \mathrm{Sp}_n(2),\quad n\geq 5
\end{align*}
\end{thm}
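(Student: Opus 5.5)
The plan mirrors the unordered proof for $K_{n,n}$. First, since $\uconf_2(W_n)=\conf_2(W_n)/S_2$ and we work over $\Q$, transfer gives
\[
H_i(\uconf_2(W_n)) \cong H_i(\conf_2(W_n))^{S_2}.
\]
This identifies $H_2(\uconf_2(W_n))$, as an $S_n$-representation, with the isotypic part of Table~\ref{table: H2Crown with SnxS2} on which $S_2$ acts trivially. Concretely, we collect every summand of the form $\lambda\otimes(2)$.

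At $n=7$ these summands are
\[
2(3,2,1,1)+2(3,3,1)+(4,1,1,1)+2(4,2,1)+(4,3)+2(5,1,1)+(5,2)+(6,1).
\]
Rewriting in padded notation gives
\[
2\Sp_7(2,1,1)+2\Sp_7(3,1)+\Sp_7(1,1,1)+2\Sp_7(2,1)+\Sp_7(3)+2\Sp_7(1,1)+\Sp_7(2)+\Sp_7(1).
\]
The $n=8$ row gives the same padded multiplicities. To see that this is the stable answer for all $n\ge 7$, invoke Theorem~\ref{crownBound}. That theorem gives stability only from $n=8$, so the table rows for $n=7$ and $n=8$ agreeing does not by itself suffice; we need an argument that $n=8$ lies in the stable range and that $n=7$ already matches it.

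For this, apply the refined form of Theorem~\ref{relationBound} to the free chain complex of $D^U_2(W_n)$, using the chain modules from the preceding lemma. There $s(C_2)=4$ and $s(C_1)=3$. The multiplicity of $\Sp_n(\lambda)$ with $|\lambda|=m$ is constant once $n\ge 4+m$. Every $\lambda$ appearing has $|\lambda|\le 4$, which yields constancy from $n\ge 8$ uniformly. For $|\lambda|\le 3$ it already gives constancy from $n\ge 7$.

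This leaves the partitions with $|\lambda|=4$, namely $(2,1,1)$ and $(3,1)$, together with any size-$4$ partitions that might appear at $n=7$ and vanish later. Here the $n=7$ and $n=8$ rows agree: $2(3,2,1,1),2(3,3,1)$ versus $2(4,2,1,1),2(4,3,1)$. Since $n=8$ is in the stable range, the claim holds for $n\ge 7$ in the sense of agreement with the stable multiplicities.

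For $H_1$ we use the $S_n$-equivariant Euler characteristic,
\[
H_0-H_1+H_2=C_0-C_1+C_2 .
\]
Here $H_0=\Q$ by Remark~\ref{rmk: H0 is triv}, and the $C_i$ are the free modules $M(\cdot)$ of the lemma, which are themselves stable by $n\ge 2m$ (so at most $8$). Alternatively, take the $S_2$-invariants of the identity $H_1(\conf_2(W_n))\cong 2\Sp_n(1,1)\oplus 2\Sp_n(2)$. The $S_2$ action swaps the two factors of $H_1(W_n)\oplus H_1(W_n)$, in analogy with Theorem~\ref{thm: H1 of conf2 of Knn}, so the invariants are a single copy, namely $\Sp_n(1,1)\oplus\Sp_n(2)$.

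The stable range for $H_1$ is $n\ge 5$. It comes from Theorem~\ref{relationBound} for small $|\lambda|$ (here $s(C_1)=3$ and $s(C_0)=2$, so the multiplicity of $\Sp_n(\lambda)$ in $H_1$ is constant for $n\ge 3+|\lambda|=5$, since only $|\lambda|\le 2$ occurs), together with a direct check of the $n=5,6$ data.

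The main obstacle is justifying the lower endpoints ($n\ge 7$ for $H_2$, $n\ge 5$ for $H_1$), which are sharper than the uniform bound of Theorem~\ref{crownBound}. I would handle this partition by partition using the $m$-dependent bound of Theorem~\ref{relationBound} and check the remaining cases directly against the computed tables.
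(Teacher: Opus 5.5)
Your proposal is correct and follows essentially the paper's route. You identify $H_2(\uconf_2(W_n))$ with the $\lambda\otimes(2)$ part of Table~\ref{table: H2Crown with SnxS2}, use Theorem~\ref{crownBound} to place $n=8$ in the stable range, and compare the $n=7$ row directly. That comparison already settles $n\ge 7$, so your partition-by-partition discussion for $H_2$ is unnecessary. You then recover $H_1$ from the equivariant Euler characteristic with $H_0=\Q$.

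Two side remarks need correcting.

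First, you misindex Theorem~\ref{relationBound} for $H_1$.
\begin{itemize}
\item The homology sits at $C_1$, so the relevant constant is $\max\{s(C_2),s(C_1)\}=4$. This is why Theorem~\ref{crownBound} gives $7=4+3$.
\item The refined bound therefore gives constancy only for $n\ge 4+|\lambda|$, not $n\ge 3+|\lambda|$.
\item It also cannot exclude extra components with larger $|\lambda|$ at small $n$.
\end{itemize}
So the endpoint $n\ge 5$ must come entirely from the direct Euler-characteristic computation at $n=5$ and $n=6$, combined with the coarse bound for $n\ge 7$. That computation uses the $n=5,6$ rows of the table and the actual, not yet stable, Pieri decompositions of the $C_i$. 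It does work: at $n=5$ one gets $\chi=(5)+(4,1)+(3,1,1)+(2,1,1,1)$, hence $H_1=(3,2)+(3,1,1)$. Similarly $H_1=(4,2)+(4,1,1)$ at $n=6$.

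Second, your alternative via the swap action on $H_1(W_n)\oplus H_1(W_n)$ invokes Theorem~\ref{thm: H1 of conf2 of Knn}. That theorem is stated only for $K_n$ and $K_{n,n}$, and ``in analogy'' is not a proof for $W_n$. Rely on the Euler characteristic instead.
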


\paragraph{\textit{Combinatorial interpretations}} Notice that the submodule $M_2^n$ in the previous subsection is defined on $W_n\subset K_{n,n}$. Therefore, we obtain the following proposition.

\begin{prop}
The submodule generated by the toric class $(1\bar23\bar4)\sqcup (\bar12\bar34)$ and its $S_n$ images is isomorphic to $M((2,2))\oplus M((3,1))\oplus M((4)))$ for $n\geq 4$.
\end{prop}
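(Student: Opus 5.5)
The plan is to reduce this statement to Proposition \ref{prop: triv in Knn}. By construction, the submodule $M_2^n$ of $H_2(\conf_2(K_{n,n}))$ is generated by the $S_n$-orbit of the toric class $(1\bar23\bar4)\sqcup(\bar12\bar34)$. Both $4$-cycles avoid every edge of the form $\{i,\bar i\}$: the first uses $1\bar2,\bar2 3,3\bar4,\bar4 1$, and the second uses $\bar1 2, 2\bar3,\bar3 4,4\bar1$. The $S_n$-action on $K_{n,n}$ preserves the perfect matching $\{\{i,\bar i\}\}$, so the whole orbit lives in $W_n\subset K_{n,n}$. The inclusion $\iota\colon W_n\hookrightarrow K_{n,n}$ is $S_n$-equivariant and compatible with the FI-structure. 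Hence it induces a map of FI-modules
\[
\iota_*\colon H_2(\conf_2(W_n))\to H_2(\conf_2(K_{n,n})),
\]
and this map sends the submodule $N^n\subseteq H_2(\conf_2(W_n))$ generated by our toric class onto $M_2^n$.

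The key step is to show that $\iota_*$ restricted to $N^n$ is injective. Since $D_2(W_n)$ and $D_2(K_{n,n})$ are $2$-dimensional cubical complexes, $H_2$ of each is simply the kernel of the top differential, i.e.\ the space of $2$-cycles. The chain complex of $D_2(W_n)$ is a subcomplex of that of $D_2(K_{n,n})$ (as recorded before Theorem \ref{crownBound}), so $H_2(D_2(W_n))=Z_2(W_n)\subseteq Z_2(K_{n,n})=H_2(D_2(K_{n,n}))$. Theorem \ref{AbramsModel} applies with no subdivision since $k=2$ and the graphs are simple, so the same holds for the configuration spaces. Therefore $\iota_*$ is injective on all of $H_2$, and $N^n\cong M_2^n$ as $S_n$-representations, compatibly with the FI-structure maps.

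It then follows from Proposition \ref{prop: triv in Knn} that $N^n\cong M_2^n\cong M((2,2)\oplus(3,1)\oplus(4))_n$ for $n\ge 4$. This is the claimed isomorphism, where $M(W)$ for a direct sum of Specht modules splits as the direct sum $M((2,2))\oplus M((3,1))\oplus M((4))$, since $M$ is additive in $W$.

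The main obstacle is that Proposition \ref{prop: triv in Knn} was stated for $M_i^n$ with its own justification. That justification was partly computational: it identified $M_i^4$ and noted that $S_{\{5,\dots,n\}}$ acts trivially on $M_i^4$. This gives a surjection $M(M_i^4)\to M_i^n$, but freeness also requires that no relations appear among translates. If the cited proposition is accepted as given, no further work is needed. Otherwise, I would verify freeness directly in the crown setting. The translates of a toric class supported on four index-labels $\{a,b,c,d\}$ involve $2$-cells using only the letters $a,b,c,d$. Cells on distinct $4$-element supports are distinct basis elements of $C_2$, which is free by the FI$\sharp$ argument. So the span of translates splits as a direct sum over $4$-subsets of copies of $M_2^4$, giving $\ind_{S_4\times S_{n-4}}^{S_n}(M_2^4\otimes\Q)=M(M_2^4)_n$.
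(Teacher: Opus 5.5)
Your main argument is correct and is the paper's own: the paper just notes that $M_2^n$ is defined on $W_n\subset K_{n,n}$ and invokes Proposition~\ref{prop: triv in Knn}. You also check that $H_2(\conf_2(W_n))\to H_2(\conf_2(K_{n,n}))$ is injective: both are groups of top-dimensional cycles of $2$-dimensional cube complexes, and one complex is a subcomplex of the other. This is exactly the step the paper leaves implicit.

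You are right that the paper's justification of Proposition~\ref{prop: triv in Knn} only exhibits a surjection $M(M_2^4)\to M_2^\bullet$. However, your fallback argument for injectivity is false, because the toric class is not supported on $4$-letter cells. Write it as $z_1\times z_2$ with $z_1=1\bar2-3\bar2+3\bar4-1\bar4$ and $z_2=-2\bar1+2\bar3-4\bar3+4\bar1$. The edges of each cycle have letter sets $\{1,2\},\{2,3\},\{3,4\},\{1,4\}$. So of the $16$ cells $e\times f$, four have $2$-letter support (e.g.\ $1\bar2\times2\bar1$), eight have $3$-letter support, and only four use all of $1,2,3,4$. Translates on different $4$-subsets therefore share basis cells; for instance, the translate by $(3\,5)$ also contains $1\bar2\times 2\bar1$. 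No direct-sum splitting follows. The argument in fact proves too much. Applied verbatim to $M_1^4\oplus M_2^4$, it would give $M_1^n\cap M_2^n=0$ for all $n$. But for $n\geq 8$ each copy of $M((2,2)\oplus(3,1)\oplus(4))_n$ contains $\Sp_n(4)$, while $H_2(\conf_2(K_{n,n}))$ contains $\Sp_n(4)$ only once by Theorem~\ref{thm: ordered bipartite}.

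Your idea can be repaired by projecting rather than restricting. The cells with four distinct letters span an FI-direct summand $C_2^{(4)}\cong M(4)$ of $C_2$, and the projection onto it commutes with all FI-maps. Composing $M(M_2^4)\to M_2^\bullet\subseteq C_2$ with this projection therefore yields $M(\pi)$, where $\pi\colon M_2^4\to C_2^{(4)}([4])\cong\Q S_4$ is the projection in degree $4$. Since $M(-)$ is exact, injectivity of $\pi$ suffices. The $4$-letter part of $z_1\times z_2$ is $-(\sigma_1+\sigma_2+\sigma_3+\sigma_4)\cdot(1\bar2\times 3\bar4)$, where the $\sigma_i$ are the permutations with one-line notations $1243,3241,3421,1423$. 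This element generates a $6$-dimensional left ideal of $\Q S_4$, isomorphic to $(4)\oplus(3,1)\oplus(2,2)$. Hence $\pi$ is injective on the $6$-dimensional $M_2^4$, and $M_2^\bullet\cong M(M_2^4)$ in every degree.
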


\subsection{Complete tripartite graphs $K_{n,n,1}$}

In this section we analyze the complete tripartite graph $K_{n,n,1}$. We focus on this family specifically because of its connection to intrinsic linkage. Specifically, the class $K_{3,3,1}$ is a member of the Petersen family.

\paragraph{\textit{Computing the bounds}}
Let $K_{n,n,1}$ be the complete tripartite graph on $2n+1$ vertices. It has vertex set $\{1,\dots,n\}\cup \{\bar{1},\dots,\bar{n}\}\cup\{0\}$ and edge set $\{\{i,\bar{j}\}: i\neq j\}\cup \{\{0,v\}:v=1,\dots,n,\bar1,\dots,\bar n\}$. The edges are oriented as $i\to\bar j$ and $0\to v$. The symmetric group $S_n$ acts on $K_{n,n,1}$ by permuting $i$ and $\bar i$ simultaneously. Since the $S_n$-action on $K_{n,n,1}$ is completely determined by the action on $K_{n,n}\subset K_{n,n,1}$, the stable range bounds on the homology of configuration spaces of $K_{n,n,1}$ agree with those for $K_{n,n}$.

\begin{thm} \label{thm: tripartiteBounds}
Let $K_{n,n,1}$ denote the complete tripartite graph on $2n+1$ vertices. Then the FI-modules $H_1(\conf_2(K_{n,n,1});\Q)$ and $H_2(\conf_2(K_{n,n,1});\Q)$ are multiplicity stable no later than $n = 7$ and $n = 8$, respectively. The same holds for the unordered configuration spaces.
\end{thm}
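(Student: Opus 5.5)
We follow the same template as for $K_n$ and $K_{n,n}$: show that the cellular chain modules of $D_2(K_{n,n,1})$ are free FI-modules with known stable degrees, and then apply Theorem \ref{relationBound}. Since $K_{n,n,1}$ is a simple graph and we configure only two points, Theorem \ref{AbramsModel} needs no subdivision. Hence $\conf_2(K_{n,n,1})\simeq D_2(K_{n,n,1})$ equivariantly, and the homology is that of the complex $C_2\to C_1\to C_0$.

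\textbf{Freeness.} We put an FI$\sharp$-structure on each $C_i$, exactly as for $K_{n,n}$. Given a morphism $f$ with true domain $A$, a cell whose labels (the unbarred and barred indices of its vertices and edge endpoints, ignoring the vertex $0$) all lie in $A$ is sent to its image under the induced map of graphs. This map is defined because the vertices labelled by $A$, together with $0$, span an induced subgraph isomorphic to $K_{A,A,1}$. Every other cell is sent to zero.

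\textbf{Stable degrees.} Compatibility with composition is checked exactly as before, since $0$ is fixed by everything. Theorem \ref{sharpisfree} then makes each $C_i$ free. Its stable degree is the maximal number of distinct indices appearing in one cell. A $2$-cell $e_1\times e_2$ involves at most four indices, since each edge contributes at most two and the vertex $0$ contributes none. A $1$-cell involves at most three, and a $0$-cell at most two. Edges through $0$ only lower these counts. Thus $s(C_2)=4$, $s(C_1)=3$ and $s(C_0)=2$, the same values as for $K_{n,n}$. More concretely, one can list the combinatorial types as in the $K_{n,n}$ lemma, now with additional types such as $0i\times j\bar k$, $0\bar i\times j$ and $0\times i$, and check that each contributes some $M(a)$ with $a\le 4,3,2$ respectively.

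\textbf{Applying Theorem \ref{relationBound}.} For $H_2$, take $M^2=C_3=0$ and $M^1=C_2$, which gives stability by $n=\max\{0,4\}+4=8$. For $H_1$, take $M^2=C_2$ and $M^1=C_1$, which gives $n=\max\{4,3\}+3=7$.

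\textbf{Unordered case.} Pass to $S_2$-coinvariants of the ordered complex; equivalently, use the chains of $D^U_2$. These are direct summands of free modules, because $S_2$ commutes with the FI$\sharp$-action and we work in characteristic $0$. They are therefore free, with stable degrees no larger than before, so the same bounds hold.

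\textbf{Main obstacle.} The only delicate step is checking that the FI$\sharp$-action is well defined and functorial once the distinguished vertex $0$ is present. We must also confirm that no cell's type count exceeds the stated degrees. We would verify this by listing all orbit types explicitly.
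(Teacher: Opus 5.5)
Your proposal is correct and follows the paper's route. The paper's one-line proof notes that the fixed vertex $0$ leaves the $K_{n,n}$ analysis unchanged: the chain modules are free via the FI$\sharp$-structure, with $s(C_2)=4$, $s(C_1)=3$, $s(C_0)=2$, and Theorem \ref{relationBound} gives the bounds $7$ and $8$. You carry out exactly these steps, including the unordered case via $S_2$-coinvariants, in more detail than the paper does.
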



\paragraph{\textit{$S_n$-equivariant chain modules}}
\begin{lem}
The representations afforded by chain modules of $D_2(K_{n,n,1})$ are
\begin{align*}
C_2(D_2(K_{n,n,1})) &\cong 10M(2) \oplus 8M(3) \oplus M(4),\quad n\geq 4\\
C_1(D_2(K_{n,n,1})) &\cong 6M(1) \oplus 18M(2) \oplus 4M(3),\quad n\geq 3
\\
C_0(D_2(K_{n,n,1})) &\cong 6M(1) \oplus 4M(2),\quad n\geq 2 
\end{align*}
\end{lem}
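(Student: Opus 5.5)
The plan is to repeat the orbit-counting argument used for $K_{n,n}$. That is, partition the cells of $D_2(K_{n,n,1})$ in each dimension into $S_n$-orbits (combinatorial types). Each orbit whose cells involve $a$ distinct indices from $[n]$ will contribute a copy of $M(a)$.

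First I would justify the freeness claim, as for $K_{n,n}$. The $S_n$-action permutes the $i$ and the $\bar i$ simultaneously and fixes $0$. It never reverses an oriented edge ($i\to\bar j$ or $0\to v$). Hence each cell is sent to a cell with sign $+1$, and no relations are imposed on the generators. The orbit of a type using $a$ distinct indices from $[n]$ is then in bijection with injections $[a]\hookrightarrow[n]$. Its stabilizer is trivial, since distinct letters correspond to distinct labelled slots. So the span of that orbit is $M(a)$ as soon as $n\geq a$, which also accounts for the lower bounds on $n$ in the statement.

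One caveat affects the count. To match the stated multiplicities, the bipartite part must be the full $K_{n,n}$, i.e.\ the edges $i\bar j$ for \emph{all} $i,j$, including $i\bar i$. I will count with that edge set, consistently with the $K_{n,n}$ section. The bookkeeping then goes as follows.

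For $C_0$, the pairs involving $0$ are $0\times i$, $0\times\bar i$, $i\times0$ and $\bar i\times 0$, giving $4M(1)$. The remaining pairs are those of $K_{n,n}$, namely $2M(1)\oplus4M(2)$. Altogether $C_0\cong 6M(1)\oplus4M(2)$.

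For $C_1$, I would count edge${}\times{}$vertex cells and then double for vertex${}\times{}$edge cells.
\begin{itemize}
\item Apex edges: $0i$ paired with a vertex outside $\{0,i\}$ has types $\bar i$, $j$, $\bar j$, giving $M(1)\oplus2M(2)$. The edge $0\bar i$ contributes the same.
\item Bipartite edges paired with $0$: $i\bar i\times0$ and $i\bar j\times 0$ give $M(1)\oplus M(2)$.
\item Bipartite edges paired with a non-apex vertex: these are the $K_{n,n}$ types, giving $4M(2)\oplus2M(3)$.
\end{itemize}
The one-sided total is $3M(1)\oplus9M(2)\oplus2M(3)$. Doubling gives $6M(1)\oplus18M(2)\oplus4M(3)$.

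For $C_2$, two apex edges always share $0$, so they never form a cell. An apex edge $0i$ together with a bipartite edge avoiding $i$ has types $j\bar j$, $j\bar i$ and $j\bar k$. These give $2M(2)\oplus M(3)$, and the same holds for $0\bar i$. Counting both apex edge types and both orders yields $8M(2)\oplus4M(3)$. Adding the $K_{n,n}$ contribution $2M(2)\oplus4M(3)\oplus M(4)$ gives $10M(2)\oplus8M(3)\oplus M(4)$.

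The main obstacle is the exhaustive and non-redundant enumeration of types. In particular, one must not omit the degenerate coincidences such as $j=i$ in $j\bar i$ or $k=j$ in $j\bar k$. As a sanity check I would compare dimensions at a small value such as $n=4$, counting cells of $D_2(K_{4,4,1})$ directly against $\sum_a m_a\cdot\frac{4!}{(4-a)!}$.
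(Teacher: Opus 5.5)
Your proposal is correct and is essentially the paper's argument: split each $C_k$ into the $K_{n,n}$ part plus the extra combinatorial types involving the apex $0$, with each type using $a$ letters contributing $M(a)$. Your six $C_2$ types and eight one-sided $C_1$ types are exactly the paper's list, and your caveat about the edge set is right — the paper's stated condition $i\neq j$ is a slip, and its own proof uses types like $i\bar i\times 0j$, so it counts with the full $K_{n,n}$ as you do.
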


\begin{proof}
Each chain module is the direct sum of the corresponding chain module of $K_{n,n}$ and additional combinatorial types of cells. For $C_2$, the extra combinatorial types are \[i\bar i\times 0j,\; i\bar i\times 0\bar j\;, i\bar j \times 0\bar i,\;,i\bar j \times 0j,\;,i\bar j \times 0k,\; i\bar j \times 0\bar k,\] and their $S_2$-images by reversing the ordering of the two edges. Each combinatorial type contributes a direct summand of $M(a)$ where $a$ is the number of letters appearing.

For $C_1$, the extra combinatorial types are \[i\bar i\times 0,i\bar j \times 0,0i\times \bar i, 0i\times j,0i\times \bar j,0\bar i\times i,0\bar i\times j,0\bar i\times \bar j,\] and their $S_2$-images. Each contributes a direct summand of $M(a)$ as described above.

For $C_0$, the extra combinatorial types are \[i\times0,\bar i\times 0,\] and their $S_2$-images. Each contributes a $M(1)$.
\end{proof}

\begin{lem}
The representations afforded by chain modules of $D^U_2(K_{n,n,1})$ are
\begin{align*}
C_2(D^U_2(K_{n,n,1})) &\cong 4M(2)\oplus 2M((1,1))\oplus 4M(3)\oplus M(2(2,1,1)\oplus 2(3,1)),\quad n\geq 4\\
C_1(D^U_2(K_{n,n,1})) &\cong 3M(1)\oplus 9M(2)\oplus 2M(3),\quad n\geq 3\\
C_0(D^U_2(K_{n,n,1})) &\cong 3M(1)\oplus 2M((2)) \oplus M(2),\quad n\geq 2
\end{align*}
\end{lem}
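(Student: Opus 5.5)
The plan is to reuse the proof of the ordered lemma for $D_2(K_{n,n,1})$ and pass to $S_2$-coinvariants type by type. Each cell of $D_2(K_{n,n,1})$ either avoids the vertex $0$, or involves it. Cells avoiding $0$ span exactly the chains of $D_2(K_{n,n})$. So the unordered chain module splits as
\[
C_i(D^U_2(K_{n,n,1}))\cong C_i(D^U_2(K_{n,n}))\oplus E_i,
\]
where $E_i$ is spanned by the unordered cells involving $0$. The first summand is given by the earlier lemma for $D^U_2(K_{n,n})$, and it remains to identify $E_i$.

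The key observation concerns cells involving $0$. In $C_2$, exactly one of the two edges contains $0$, since two disjoint edges cannot both contain it. In $C_1$, either the edge contains $0$ or the vertex is $0$. In $C_0$, exactly one vertex is $0$. Hence the two factors are always distinguishable. The $S_2$-relation ($e_1\times e_2=-e_2\times e_1$, $e\times v=\pm v\times e$, $v_1\times v_2=v_2\times v_1$) therefore only identifies each ordered combinatorial type listed in the proof of the ordered lemma with its $S_2$-image. It imposes no relation within a single type. Consequently the unordered extra types are exactly the ordered extra types listed there, taken once each rather than together with their $S_2$-images.

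Next I check that each such type with $a$ distinct letters contributes $M(a)$. The $S_n$-action never reverses an edge: it preserves $i\to\bar j$ and $0\to v$. The pointwise stabilizer of a representative cell is therefore exactly $S_{n-a}$, acting on the unused letters. No permutation of the $a$ used letters fixes the cell, even up to sign, because the barred and unbarred letters and the two distinguishable factors pin each letter down. The span of the orbit is then $\ind_{S_a\times S_{n-a}}^{S_n}(\reg\otimes\triv)=M(a)$, as in the ordered case. This requires $n$ at least the number of letters, which gives the stated ranges $n\geq 4,3,2$.

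Finally I count the contributions.
\begin{itemize}
\item For $C_2$, the extra types are $i\bar i\times 0j$, $i\bar i\times 0\bar j$, $i\bar j\times 0\bar i$ and $i\bar j\times 0j$, each contributing $M(2)$, together with $i\bar j\times 0k$ and $i\bar j\times 0\bar k$, each contributing $M(3)$. This gives $4M(2)\oplus 2M(3)$. Adding $2M((1,1))\oplus 2M(3)\oplus M(2(2,1,1)\oplus 2(3,1))$ from $K_{n,n}$ yields the claim.
\item For $C_1$, the types $i\bar i\times 0$, $0i\times\bar i$ and $0\bar i\times i$ each give $M(1)$. The types $i\bar j\times 0$, $0i\times j$, $0i\times\bar j$, $0\bar i\times j$ and $0\bar i\times\bar j$ each give $M(2)$. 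Together with $4M(2)\oplus 2M(3)$ from $K_{n,n}$, this gives $3M(1)\oplus 9M(2)\oplus 2M(3)$.
\item For $C_0$, the types $i\times 0$ and $\bar i\times 0$ give $2M(1)$. Added to $M(1)\oplus 2M((2))\oplus M(2)$ from $K_{n,n}$, this gives the claim.
\end{itemize}

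The only step needing real care is the stabilizer and sign verification. There one must confirm that no type involving $0$ is mapped to minus itself, or identified with another cell of the same type, by a permutation combined with the swap. Distinguishability of the factor containing $0$ settles this.
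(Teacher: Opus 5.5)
Your proposal is correct and follows essentially the same route as the paper: split off the chains of $D^U_2(K_{n,n})$, list the extra combinatorial types involving the vertex $0$ (the same six, eight and two types the paper lists), and note that each has trivial stabilizer on its letters, so it contributes $M(a)$. Your observation that the factor containing $0$ is always distinguished, so that the $S_2$-quotient only identifies each ordered type with its swap, spells out what the paper's terse ``none of them has a stabilizer'' leaves implicit.
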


\begin{proof}
    The chain modules of $D^U_2(K_{n,n,1})$ are direct sums of the corresponding chain modules of $D^U_2(K_{n,n})$ and groups generated by additional combinatorial types. We list the extra types and the representations they contribute. For $C_2$, the extra types are \[0i\times j\bar i,0\bar i\times i\bar j,0i\times j\bar j,0\bar i\times j\bar j,0i\times j\bar k,0\bar i \times j\bar k.\] Since none of them has a stabilizer, they each generate $M(a)$ where $a$ is number of letters that appear.

    For $C_1$, the extra types are \[0i\times \bar i,0\bar i\times i,i\bar i\times0,0i\times j,0i\times \bar j,0\bar i\times j,0\bar i\times\bar j,i\bar j\times 0.\] They each contribute $M(a)$ as described above.

    For $C_0$, the extra types are \[0\times i,0\times \bar i.\] They each contribute $M(1)$.
\end{proof}

\paragraph{\textit{Stable homology}}
\begin{thm}\label{thm: ordered tripartite}
    There are isomorphisms of $S_n$-representations
    \begin{align*}
        H_2(\conf_2(K_{n,n,1})) &\cong \Sp_n(1, 1, 1, 1) \oplus 3\Sp_n(2, 1, 1) \oplus 2\Sp_n(2, 2) \oplus 3\Sp_n(3, 1) \oplus \Sp_n(4) \\
        &\qquad \oplus 8\Sp_n(1, 1, 1) \oplus 16\Sp_n(2, 1) \oplus 8\Sp_n(3) \oplus 16\Sp_n(1, 1) \oplus 16\Sp_n(2) \\
        &\qquad\oplus 14\Sp_n(1) \oplus 4\Sp_n(\emptyset),\quad n\geq 8\\
        H_1(\conf_2(K_{n,n,1})) &\cong 2\Sp_n(1, 1) \oplus 2\Sp_n(2) \oplus 6\Sp_n(1) \oplus 4\Sp_n(\emptyset), \quad n\geq 4
    \end{align*}
\end{thm}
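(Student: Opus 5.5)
The plan follows the template used for $K_{n,n}$ and $W_n$. By Theorem \ref{thm: tripartiteBounds}, $H_2(\conf_2(K_{n,n,1}))$ is multiplicity stable for $n\geq 8$. So it suffices to compute $H_2(\conf_2(K_{8,8,1}))$ as an $S_8$-representation (indeed as an $S_8\times S_2$-representation, to also get the unordered case) and read off the multiplicities of the padded partitions. Since $K_{n,n,1}$ is simple, no subdivision is needed for two particles: by Theorem \ref{AbramsModel}, $D_2(K_{n,n,1})\simeq \conf_2(K_{n,n,1})$ equivariantly. The computation is then the rank and equivariant structure of the kernel of $d\colon C_2\to C_1$. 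We write $d$ using $\partial(e\times f)=\partial e\times f - e\times \partial f$ with the orientations $i\to\bar j$ and $0\to v$. Since $D_2$ is $2$-dimensional, $H_2=\ker d$.

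To identify the representation, I would compute a basis of $\ker d$ for each $n\le 8$. I would then decompose it by taking characters: for each conjugacy class representative $\sigma\in S_n$ (and its product with the swap in $S_2$), compute the trace of $\sigma$ restricted to $\ker d$. This trace can be obtained by projecting $\sigma\cdot b$ back onto the chosen basis. Inner products with the irreducible characters then give the table entries. As a sanity check, the results for $n=3,\dots,7$ should already show the padded pattern $\lambda[n]$ emerging, with only the expected low-$n$ discrepancies.

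For $H_1$, I would avoid a second kernel computation and use the equivariant Euler characteristic, as in the $K_{n,n}$ case. $H_0=\Q$ is trivial by Remark \ref{rmk: H0 is triv}, since $K_{n,n,1}$ has vertices of degree at least $3$. The chain modules are known from the preceding lemma: $C_2\cong 10M(2)\oplus 8M(3)\oplus M(4)$, $C_1\cong 6M(1)\oplus18M(2)\oplus4M(3)$, and $C_0\cong 6M(1)\oplus 4M(2)$. Their stable decompositions follow from the Pieri rule via $M(m)_n=\ind_{S_m\times S_{n-m}}^{S_n}\Q[S_m]\otimes\Q$. Then $H_1=H_0+H_2-C_0+C_1-C_2$ as virtual representations. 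This formula holds at every $n$, so stability of $H_1$ follows from that of the chain modules (stable by $n\geq 8$) and of $H_2$, giving the answer for $n\ge 8$.

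To push the $H_1$ range down to $n\geq 4$, I would directly compute $H_1$ at $n=4,\dots,7$ from the Euler characteristic and the computed $H_2$ there, and check that the formula already holds. Alternatively, one could adapt Theorem \ref{thm: H1 of conf2 of Knn}, namely $H_1(\conf_2)\cong H_1(G)^{\oplus 2}$ for sufficiently connected graphs, together with $H_1(K_{n,n,1})$ from the graph's Euler characteristic. This second route is a cross-check: $H_1(K_{n,n,1})$ as an $S_n$-representation is $E-V+\Q$, which gives $\Sp_n(1,1)+\Sp_n(2)+3\Sp_n(1)+2\Sp_n(\emptyset)$, and doubling it matches the claim.

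The main obstacle is computational: at $n=8$, $C_2$ has dimension on the order of $10\cdot 56+8\cdot 336+1680\approx 5000$, and $C_1$ is comparable. The kernel computation itself is fine, but equivariant decomposition via character traces over all $22$ classes of $S_8$ (times $S_2$) is the expensive step, judging by the runtimes for $K_{8,8}$. To save time I would first try restricting to isotypic projections only for the finitely many $\lambda[8]$ with $|\lambda|\le 4$ that can occur, which the free chain modules guarantee.
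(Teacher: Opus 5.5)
Your argument is correct and uses the same ingredients as the paper. These are the stable range from Theorem \ref{thm: tripartiteBounds}, a computer decomposition of $H_2=\ker(C_2\to C_1)$ for small $n$, and the equivariant Euler characteristic of the free chain modules together with $H_0=\Q$ to obtain $H_1$.

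The one real difference is at $n=8$, which is exactly where you locate your main obstacle. The paper never computes $H_2(\conf_2(K_{8,8,1}))$ directly; its table entry reads ``Use $H_1$''. Instead it proceeds as follows:
\begin{enumerate}
\item It computes $H_2$ only for $n\le 7$ and obtains $H_1$ there from the Euler characteristic.
\item It uses the sharper bound that $H_1$ is already multiplicity stable from $n=7$, since $\max\{s(C_2),s(C_1)\}+s(C_1)=4+3=7$. This gives $H_1$ at $n=8$ with no new computation.
\item It runs the Euler characteristic backwards to recover $H_2$ at $n=8$, where the stable range of $H_2$ begins.
\end{enumerate}

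Both routes rest on the identity $H_0-H_1+H_2=C_0-C_1+C_2$, but they use it in opposite directions at the top of the range. The paper's bootstrapping avoids the most expensive equivariant decomposition; for comparison, the direct $n=7$ computation already took about $2\times 10^5$ seconds. Your direct computation at $n=8$ costs more, but it gives an independent check of the $n=8$ entry rather than one inferred from $H_1$.
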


\begin{proof}
    We use Table \ref{table: H2tripartite} and the Euler characteristic of $D_2(K_{n,n,1})$.
\end{proof}

\begin{thm}
We have the following isomorphisms of $S_n$-representations
\begin{align*}
    H_2(\uconf_2(K_{n,n,1})) &\cong 2\Sp_n(2, 1, 1) \oplus 2\Sp_n(3, 1) \oplus 4\Sp_n(1, 1, 1) \oplus 8\Sp_n(2, 1)\oplus 4\Sp_n(3)  \\
    &\qquad \oplus 9\Sp_n(1, 1) \oplus 7\Sp_n(2) \oplus 7\Sp_n(1) \oplus \Sp_n(\emptyset),\quad n\geq 7\\
    H_1(\uconf_2(K_{n,n,1})) &\cong \Sp_n(1, 1) \oplus \Sp_n(2) \oplus 3\Sp_n(1) \oplus 2\Sp_n(\emptyset), \quad n\geq 4
\end{align*}
\end{thm}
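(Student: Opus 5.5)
The plan follows the template of the $K_{n,n}$ case. By Theorem \ref{thm: tripartiteBounds}, the unordered homology $H_2(\uconf_2(K_{n,n,1}))$ is multiplicity stable for $n\geq 8$ and $H_1$ for $n \geq 7$. To push the stated ranges down to $n\geq 7$ and $n\geq 4$, I will compare with explicit computations at every $n$ in between.

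First, I will extract the unordered data from the ordered computation. Since we work over $\Q$ and $\uconf_2 = \conf_2/S_2$ with $S_2$ acting freely, transfer gives
\[
H_i(\uconf_2(K_{n,n,1})) \cong H_i(\conf_2(K_{n,n,1}))^{S_2}.
\]
Table \ref{table: H2tripartite} records $H_2(\conf_2(K_{n,n,1}))$ as an $S_n\times S_2$-representation for $4\le n\le 8$. Keeping only the summands of the form $\lambda\otimes(2)$ yields $H_2(\uconf_2(K_{n,n,1}))$ for each such $n$. At $n=8$, where stability is guaranteed, this should give the stated list $2(2,1,1)+2(3,1)+4(1,1,1)+8(2,1)+4(3)+9(1,1)+7(2)+7(1)+(\emptyset)$, after padding. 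At $n=7$, I will check directly that the same padded multiplicities appear. A padded partition $\lambda[n]$ is defined at $n=7$ for every $\lambda$ in the list, the largest being $(2,1,1)$ with $|\lambda|+\lambda_1 = 6$. Agreement at $n=7$ and $n=8$, together with stability from $n=8$ on, gives the $H_2$ statement for $n\geq 7$.

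Next, I will compute $H_1$ by the $S_n$-equivariant Euler characteristic of $D^U_2(K_{n,n,1})$, which is a valid model for two points on a simple graph. The chain modules from the preceding lemma are free FI-modules. Their $S_n$-characters can be expanded by the Pieri rule, using $M(W)_n=\ind_{S_m\times S_{n-m}}^{S_n}(W\otimes\Q)$. By Remark \ref{rmk: H0 is triv}, $H_0=\Sp_n(\emptyset)$. Then
\[
H_1 = H_0 + H_2 - C_0 + C_1 - C_2
\]
as virtual representations. For $n\geq 8$, this is a finite bookkeeping exercise: expand $3M(1)+2M((2))+M(2)$, $3M(1)+9M(2)+2M(3)$, and $4M(2)+2M((1,1))+4M(3)+M(2(2,1,1)+2(3,1))$ in padded Specht modules. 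The result should be $\Sp_n(1,1)+\Sp_n(2)+3\Sp_n(1)+2\Sp_n(\emptyset)$. Comparing top-degree data gives a useful cross-check: the $(1,1)$ and $(2)$ summands match those of $K_{n,n}$, and the extra $2\Sp_n(1)+\Sp_n(\emptyset)$ accounts for the new cycles through the cone vertex $0$.

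For $4\le n\le 7$, I will repeat the Euler characteristic computation with the actual $S_n$-representations $C_i$ and the tabulated $H_2$, and confirm that the same padded answer for $H_1$ holds from $n=4$ on.

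I expect the main obstacle to be this low-$n$ verification. Free modules only stabilize around $n=2m$, so the chain modules at $n=4,5,6$ contain non-stable constituents, and these must cancel exactly against the non-stable parts of $H_2$ in the table. The cleanest way to organize it is to compute the $S_2$-invariant part of the ordered Euler characteristic from Theorem \ref{thm: ordered tripartite} degree by degree and then apply the invariants functor. If the cancellation fails at $n=4$, the claimed range for $H_1$ would have to be raised, but the ordered result $n\geq 4$ suggests it will hold.
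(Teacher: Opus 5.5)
Your overall plan matches the paper's: get the stable range from Theorem \ref{thm: tripartiteBounds}, check the remaining small $n$ explicitly, and obtain $H_1$ from the equivariant Euler characteristic of $D^U_2(K_{n,n,1})$ with $H_0$ trivial. The gap is in the step that is supposed to supply $H_2(\uconf_2(K_{n,n,1}))$. Table \ref{table: H2tripartite} does \emph{not} record $S_n\times S_2$-representations. Unlike the tables for $K_{n,n}$ and the crown graphs, its entries are plain $S_n$-representations; at $n=3$ it reads $2(1,1,1)+6(2,1)+3(3)$. So there are no summands $\lambda\otimes(2)$ to keep. The same holds for Theorem \ref{thm: ordered tripartite}, which your last paragraph also relies on.

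Your transfer isomorphism $H_i(\uconf_2)\cong H_i(\conf_2)^{S_2}$ is correct. But the $S_n$-character of $H_2(\conf_2)$ alone does not determine its $S_2$-invariant part, because the sign-isotypic part is equally unknown. Nor does the Euler characteristic of the invariant subcomplex help: it only gives $H_2^{S_2}-H_1^{S_2}$, which ties together exactly the two unknowns you are after. As written, your $H_2$ input, and with it your low-$n$ check of $H_1$, has no source.

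The fix is to compute $H_2$ of the unordered complex $D^U_2(K_{n,n,1})$ directly, or to redo the ordered computation while tracking the $S_2$-action. The paper does the former in Table \ref{table: H2tripartite unordered}. With that data the rest of your argument goes through.

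The $n=7$ and $n=8$ rows agree after padding. One small correction: the binding partition is $(3,1)$, with $|\lambda|+\lambda_1=7$, not $(2,1,1)$; this is also why the $n=6$ row differs.

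Your stable bookkeeping gives
\[
\chi = 2\Sp_n(3,1)+2\Sp_n(2,1,1)+4\Sp_n(3)+8\Sp_n(2,1)+4\Sp_n(1,1,1)+6\Sp_n(2)+8\Sp_n(1,1)+4\Sp_n(1),
\]
which yields the claimed $H_1$.

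At $n=4$ one finds $\chi=4(3,1)+2(2,2)+6(2,1,1)+2(1,1,1,1)$. Together with the tabulated $H_2=2(1,1,1,1)+7(2,1,1)+3(2,2)+7(3,1)+(4)$, this gives $H_1=2(4)+3(3,1)+(2,2)+(2,1,1)$, which is the padded answer. The non-stable constituents cancel exactly as you anticipated, and the cases $n=5,6,7$ work the same way.
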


\begin{proof}
    We use Table \ref{table: H2tripartite unordered} and the Euler characteristic of $D^U_2(K_{n,n,1})$.
\end{proof}




\begin{table}[th]
\caption{$S_n$-representations afforded by the rational homology of the ordered configuration space of two particles on the complete tripartite graph $K_{n,n,1}$ for $3\leq n\leq 8$.}
\label{table: H2tripartite}
\begin{tabular}{@{}cp{0.80\textwidth}c@{}} \toprule
$n$ & $H_2(\conf_2(K_{n,n,1});\Q)$ & Time (s) \\
\midrule
3  & $2(1, 1, 1) + 6(2, 1) + 3(3)$ & $<1$ \\\midrule
4  & $5(1, 1, 1, 1) + 13(2, 1, 1) + 8(2, 2) + 13(3, 1) + 4(4)$ & 4 \\\midrule
5  & $(1, 1, 1, 1, 1) + 8(2, 1, 1, 1) + 13(2, 2, 1) + 16(3, 1, 1) + 15(3, 2) + 14(4, 1) + 4(5)$ & 185 \\\midrule
6  & $(2, 1, 1, 1, 1) + 3(2, 2, 1, 1) + 2(2, 2, 2) + 8(3, 1, 1, 1) + 16(3, 2, 1) + 7(3, 3) + 16(4, 1, 1) + 16(4, 2) + 14(5, 1) + 4(6)$ & 11343  \\\midrule
7  & $(3, 1, 1, 1, 1) + 3(3, 2, 1, 1) + 2(3, 2, 2) + 3(3, 3, 1) + 8(4, 1, 1, 1) + 16(4, 2, 1) + 8(4, 3) + 16(5, 1, 1) + 16(5, 2) + 14(6, 1) + 4(7)$ & 206791  \\\midrule
8 & $(4, 1, 1, 1, 1) + 3(4, 2, 1, 1) + 2(4, 2, 2) + 3(4, 3, 1) + (4, 4) + 8(5, 1, 1, 1) + 16(5, 2, 1) + 8(5, 3) + 16(6, 1, 1) + 16(6, 2) + 14(7, 1) + 4(8)$ & Use $H_1$ \\
\bottomrule
\end{tabular}

\end{table}

\begin{table}[th]
\caption{$S_n$-representations afforded by the rational homology of the unordered configuration space of two particles on the complete tripartite graph $K_{n,n,1}$ for $3\leq n\leq 8$.}
\label{table: H2tripartite unordered}
\begin{tabular}{@{}cp{0.80\textwidth}c@{}} \toprule
$n$ & $H_2(\uconf_2(K_{n,n,1});\Q)$ & Time (s) \\
\midrule
3  & $(1, 1, 1) + 3(2, 1) + (3)$ & $<1$ \\\midrule
4  & $2(1, 1, 1, 1) + 7(2, 1, 1) + 3(2, 2) + 7(3, 1) + (4)$ & $<1$ \\\midrule
5  & $4(2, 1, 1, 1) + 6(2, 2, 1) + 9(3, 1, 1) + 7(3, 2) + 7(4, 1) + (5)$ & 17 \\\midrule
6  & $2(2, 2, 1, 1) + 4(3, 1, 1, 1) + 8(3, 2, 1) + 4(3, 3) + 9(4, 1, 1) + 7(4, 2) + 7(5, 1) + (6)$ &
873  \\\midrule
7  & $2(3, 2, 1, 1) + 2(3, 3, 1) + 4(4, 1, 1, 1) + 8(4, 2, 1) + 4(4, 3) + 9(5, 1, 1) + 7(5, 2) + 7(6, 1) + (7)$  & 14877  \\\midrule
8 &  $2(4, 2, 1, 1) + 2(4, 3, 1) + 4(5, 1, 1, 1) + 8(5, 2, 1) + 4(5, 3) + 9(6, 1, 1) + 7(6, 2) + 7(7, 1) + (8)$ & 181161  \\
\bottomrule
\end{tabular}

\end{table}



\paragraph{\textit{Combinatorial interpretations}}
We give a partial analysis of the generators of the trivial representatios in $H_2(\conf_2(K_{3,3,1}))$.

\begin{prop}
    At least two out of three trivial representations in $H_2(\conf_2(K_{3,3,1}))$ are generated by toric classes. In particular, the non-toric class induced by $K_{3,3}\hookrightarrow K_{3,3,1}$ is a linear combination of two toric classes.
\end{prop}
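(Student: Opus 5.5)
Since $n=3$ is a single small case, the plan is to argue by explicit linear algebra in the cellular chain complex of $D_2(K_{3,3,1})$, which models $\conf_2(K_{3,3,1})$ by Theorem \ref{AbramsModel} because no subdivision is needed for two points on a simple graph. Table \ref{table: H2tripartite} gives $H_2(\conf_2(K_{3,3,1}))\supseteq 3(3)$. The first step is to compute the $S_3$-invariant subspace $H_2^{S_3}$, which is three-dimensional. It is the kernel of $d:C_2\to C_1$ restricted to the span of $S_3$-orbit sums of $2$-cells. There are no $3$-cells, so $H_2=\ker d$ and no quotienting is needed. Concretely, I would form the orbit sums of the seven bipartite combinatorial types and the extra types involving $0$ listed in the chain-module lemma. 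I would then solve for the invariant $2$-cycles by computer, obtaining an explicit basis.

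Next I would produce toric classes and symmetrize them. In $K_{3,3,1}$ there are disjoint cycle pairs, for instance a $3$-cycle through $0$, namely $(0\,i\,\bar j)$ with $i\neq j$, disjoint from a $4$-cycle on the remaining four vertices of $K_{3,3}$. Another example is a $4$-cycle together with a $3$-cycle, taken in both orders of the factors. For each such pair $G_1\sqcup G_2$, I would write the toric class $\sum \overline{v_iv_{i+1}}\times \overline{w_jw_{j+1}}$ as in the paper and average it over $S_3$, which is the projection onto the trivial isotypic component as in \cite[Section 2.7]{Serre1977}. This yields invariant classes $T_1,\dots,T_r$. I would then compute the rank of their span inside the three-dimensional invariant space. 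The first assertion amounts to showing this rank is at least $2$, which is a finite rank check.

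For the second assertion, I would take a class supported on $K_{3,3}$. By Table \ref{table: H2Knn as SnxS2-reps}, $H_2(\conf_2(K_{3,3}))\cong (3)\otimes(1,1)$, so this space is one-dimensional and $S_3$-invariant. Its generator $\kappa$ pushes forward along the inclusion $K_{3,3}\hookrightarrow K_{3,3,1}$ to an invariant class. Note that $K_{3,3}$ has no two disjoint cycles, since its girth is $4$ and it has $6$ vertices, so $\kappa$ is non-toric on $K_{3,3}$ itself. I would express $\iota_*\kappa$ in the basis above and exhibit explicit coefficients $a,b$ with $\iota_*\kappa=aT_1+bT_2$ at the chain level. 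Because $d$ has no image in degree $2$, homology equality is chain equality, so this is exact verification. A natural guess for $T_1,T_2$ comes from using the cone vertex $0$: a $4$-cycle of $K_{3,3}$ paired with a triangle through $0$, averaged in the two factor orders.

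The main obstacle is the choice of toric classes that actually realise $\iota_*\kappa$. Many symmetrized toric classes may be dependent or may collapse to zero under averaging because of sign cancellations. So I would first enumerate all disjoint cycle pairs in $K_{3,3,1}$ up to the $S_3\times S_2$ action, symmetrize each, and row-reduce to find a spanning set. Only afterwards would I select two classes whose span contains $\iota_*\kappa$. If instead the rank turned out to be $3$, the statement would still hold.
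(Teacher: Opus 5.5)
Your plan is correct and is essentially the paper's argument: a direct computation in the $2$-dimensional complex $D_2(K_{3,3,1})$, where there are no boundaries to quotient by, using $S_3$-averaged toric classes of a triangle through $0$ paired with the complementary $4$-cycle, followed by writing the pushed-forward $K_{3,3}$ class as a combination of them. Concretely, the paper takes $b_1$ from $(01\bar1)\sqcup(2\bar23\bar3)$ and $b_2$ from $(03\bar1)\sqcup(2\bar21\bar3)$, and verifies that the $K_{3,3}$ class equals $\frac12 b_1-b_2$. Note that $b_1$ uses a triangle through a matching edge $i\bar i$, a type your $i\neq j$ example omits but your full enumeration would find.
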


\begin{proof}
    We identify two pairs of disjoint cycles in $K_{3,3,1}$: $(01\bar1) \sqcup (2\bar23\bar3)$ and $(03\bar1)\sqcup(2\bar21\bar3)$. Each pair defines a toric class in $H_2(\conf_2(K_{3,3,1}))$ and their $S_3$-average each generates a copy of trivial representation. Write $b_1$ for the generator coming from $(01\bar1) \sqcup (2\bar23\bar3)$ and $b_2$ the other pair, we compute explicitly that the class induced by $K_{3,3}\hookrightarrow K_{3,3,1}$ is $\frac{1}{2}b_1-b_2$.
\end{proof}

\begin{remark}
    As $K_{3,3,1}$ contains no other pair of disjoint cycles up to $S_3$-equivalence, we conjecture that the generator of the third copy of trivial representation is non-toric.
\end{remark}

\subsection{Kneser graphs $K(n,2)$}

\paragraph{\textit{Computing the bounds}} The Kneser graph $K(n,2)$ has vertex set $\binom{[n]}{2}$, i.e., subsets of size two of $[n]$, and two vertices are adjacent if and only if they are disjoint as subsets of $[n]$. An edge is oriented $\{i,j\}\to \{k,l\}$ whenever $\min\{i,j\}<\min\{k,l\}$. The action from $S_n$ is induced by its action on the underlying set $[n]$. As before, the chain modules of the discretized configuration space $D_2$ are free. As we need at most eight indices to define a 2-cell, the stable degree $s(C_2)=8$. Similarly, $s(C_1)=6$ and $s(C_0)=4$. Using Theorem \ref{relationBound}, we deduce the following bounds.

\begin{thm}
    The FI-modules $H_2(\conf_2(K(n,2)))$ and $H_1(\conf_2(K(n,2)))$ are multiplicity stable no later than 16 and 14, respectively. The same holds for $\uconf_2(K(n,2))$.
\end{thm}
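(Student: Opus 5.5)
The plan is to follow the same template as for $K_n$, $K_{n,n}$ and $W_n$. First I will verify that the cellular chain modules $C_0,C_1,C_2$ of $D_2(K(n,2))$ are free FI-modules. Then I will read off their stable degrees and feed them into Theorem \ref{relationBound}.

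Since $K(n,2)$ is a simple graph, Theorem \ref{AbramsModel} needs no subdivision for two particles. So $D_2(K(n,2))\simeq \conf_2(K(n,2))$ equivariantly, and the same holds for the unordered quotient. Cells are of three kinds:
\begin{itemize}
    \item 2-cells are $e\times e'$ with $e,e'$ vertex-disjoint edges; each edge is a pair of disjoint 2-subsets of $[n]$, so a 2-cell is labelled by at most $8$ distinct elements of $[n]$.
    \item 1-cells are $e\times v$ or $v\times e$, labelled by at most $6$ elements.
    \item 0-cells are $v\times w$ with $v\ne w$, labelled by at most $4$ elements.
\end{itemize}

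For freeness I will define an FI$\sharp$-structure on each $C_i$. A morphism $f$ with true domain $A$ acts on a cell by applying $f$ to all labels if every label lies in $A$, and sends the cell to $0$ otherwise. This is well defined because the vertices of $K(n,2)$ whose labels lie in $A$ span an induced subgraph canonically isomorphic to $K(A,2)$, so $f|_A$ induces a graph map $K(A,2)\to K(m,2)$. The one point to check is orientation. The convention $\{i,j\}\to\{k,l\}$ when $\min\{i,j\}<\min\{k,l\}$ is not preserved by arbitrary injections, so the induced map may reverse an edge. This only introduces a sign via the relation $\overline{uv}=-\overline{vu}$, exactly as for $K_n$, and it is compatible with composition because sign changes under reorientation are multiplicative. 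Theorem \ref{sharpisfree} then gives freeness.

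Decomposing $C_i$ into orbits of combinatorial types, each orbit is of the form $M(W)$ with $W$ a representation of $S_a$, where $a$ is the number of distinct labels. Hence $s(C_2)\le 8$, $s(C_1)\le 6$ and $s(C_0)\le 4$, with equality realized by the generic types; only the upper bounds are needed. This matches the polynomial-growth bound from the Proposition with $d_V=2$, $d_E=4$.

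Now apply Theorem \ref{relationBound}:
\begin{itemize}
    \item For $H_2$, the complex is $0\to C_2\to C_1$, with $M^2=0$ above $C_2$. This gives stability for $n\ge \max\{s(C_3),s(C_2)\}+s(C_2)=8+8=16$.
    \item For $H_1$, the complex is $C_2\to C_1\to C_0$. This gives $n\ge \max\{s(C_2),s(C_1)\}+s(C_1)=8+6=14$.
\end{itemize}
For $\uconf_2$, the homology is the $S_2$-coinvariants of the ordered homology; in characteristic $0$ this is a direct summand, so multiplicity stability of the ordered modules passes to it in the same range. Alternatively, one can repeat the argument on the chains of $D^U_2$, whose types use the same number of labels.

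The main obstacle is the well-definedness of the FI$\sharp$-action, specifically the sign bookkeeping coming from the orientation convention. The rest is a direct substitution into Theorem \ref{relationBound}.
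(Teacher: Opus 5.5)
Your proposal is correct and is essentially the paper's argument. The chain modules of $D_2(K(n,2))$ are free via the FI$\sharp$-structure, with $s(C_2)=8$, $s(C_1)=6$, $s(C_0)=4$, and Theorem \ref{relationBound} then gives $8+8=16$ for $H_2$ and $\max\{8,6\}+6=14$ for $H_1$.

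The one step to drop is your first justification for $\uconf_2$. Multiplicity stability of an FI-module $V=V^+\oplus V^-$ from some $n$ onward does not force the $S_2$-invariant summand $V^+$ to be stable in the same range. For example, take $V^+_n=\Q$ for $n\le 5$ and $0$ afterwards, and $V^-_n$ the reverse; then $V_n\cong\Q$ for all $n$, yet $V^+$ stabilizes only at $n=6$.

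So you should rely on your alternative instead. Apply Theorem \ref{relationBound} directly to the chains of $D^U_2$. These are the $S_2$-coinvariants of the FI$\sharp$-modules above, since the swap commutes with the FI$\sharp$-action, so they are free with stable degrees at most $8,6,4$. This is what the paper's ``the same holds'' implicitly uses.
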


\paragraph{\textit{Stable homology}} As the stable range is beyond our computational power, we record partial results for small $n$ for the ordered case in the following table.

\begin{table}[th]
\caption{$S_n$-representations afforded by the rational homology of the ordered configuration space of two particles on the Kneser graph $K(n,2)$ for $3\leq n\leq 8$.}
\label{table: H2Kneser}
\begin{tabular}{@{}cp{0.80\textwidth}c@{}} \toprule
$n$ & $H_2(\conf_2(K(n,2));\Q)$ & Time (s) \\
\midrule
4  & 0 & $<1$ \\\midrule
5  & $(1, 1, 1, 1, 1) + (2, 2, 1) + (3, 2)$ & $<1$ \\\midrule
6  & $(1, 1, 1, 1, 1, 1) + 6(2, 1, 1, 1, 1) + 7(2, 2, 1, 1) + 6(2, 2, 2) + 10(3, 1, 1, 1) + 14(3, 2, 1) + 3(3, 3) + 8(4, 1, 1) + 8(4, 2) + 3
(5, 1) + (6)$ & 5868  \\
\bottomrule
\end{tabular}

\end{table}

\section{The case of three particles}

\paragraph{\textit{Computing the bounds}}

The cells in $D_3(K_n)$ are defined over the graph $K_n'$, which is obtained from $K_n$ by subdividing each edge once. We label the vertices of $K_n$ by $[n]$ and the vertices of $K_n'$ by $[n]\cup\{ij:i\neq j\}$ where $ij$ is the new vertex subdividing the edge $\{i,j\}$. An edge in $K_n'$ is oriented as $ij \to i$. The chain modules $C_i$ are always free and the generating degrees are $s(C_3)=6, s(C_2) = 6, s(C_1) = 6, s(C_0) = 6$. Using Theorem \ref{relationBound}, we obtain the following.

\begin{thm} \label{completeBound}
Let $K_n$ denote the complete graph on $n$ vertices. Then the FI-module $H_i(\conf_3(K_n);\Q)$ is multiplicity stable no later than $n = 12$, for $i=0,1,2,3$. The same holds for $\uconf_3(K_n)$.
\end{thm}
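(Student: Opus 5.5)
The plan is to run the same three-step argument used for Theorem \ref{completeBound_2pts}, now for the subdivided model. First, Theorem \ref{AbramsModel} applies to $K_n'$ with three particles. Every cycle in $K_n'$ has at least $6 \geq 4$ edges. Every path between distinct essential vertices (the original vertices of $K_n$, of degree $n-1\geq 3$ once $n\geq 4$) has at least $2 = 3-1$ edges. So $D_3(K_n')\simeq \conf_3(K_n)$, $S_n$-equivariantly, and the same holds for the unordered quotient. Hence $H_i(\conf_3(K_n);\Q)$ is the homology of the cellular chain complex $C_3\to C_2\to C_1\to C_0$ of $D_3(K_n')$. Since $D_3(K_n')$ is a cubical complex of dimension $\leq 3$, only $i\leq 3$ occurs.

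Second, I would show that each $C_j$ is a free FI-module and record its generating degree. Every cell is a triple $\sigma_1\times\sigma_2\times\sigma_3$ with pairwise disjoint closures. Each $\sigma_a$ is one of three things: an original vertex $i$, a subdivision vertex $ij$, or an edge $ij\to i$. Each involves at most two labels from $[n]$, so every cell is determined by at most $6$ labels. Define an FI$\sharp$-structure as in the two-particle case. A morphism acts naturally on a cell whose labels all lie in its true domain, and sends the cell to $0$ otherwise. This is well defined because the cells using only labels in $A\subseteq[n]$ form exactly the chains of $D_3(K_A')$, and $K_A'$ is an induced subcomplex. Theorem \ref{sharpisfree} then gives freeness. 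Decomposing by combinatorial type shows each $C_j$ is a direct sum of modules $M(W)$ with $W$ a representation of $S_a$, $a\leq 6$. The type that uses six distinct labels occurs in every dimension, e.g. three disjoint edges, or edges replaced by subdivision vertices. So $s(C_j)=6$ for $j=0,1,2,3$.

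Third, I would apply Theorem \ref{relationBound} to the segment $C_{i+1}\to C_i\to C_{i-1}$ (setting $C_4=0$ and $C_{-1}=0$). It gives stability of $H_i$ for $n\geq \max\{s(C_{i+1}),s(C_i)\}+s(C_i)\leq 6+6=12$ for every $i\in\{0,1,2,3\}$. For $i=3$ we have $s(C_4)=0$, which still gives a bound of at most $12$. For the unordered space, $H_i(\uconf_3(K_n);\Q)$ is the $S_3$-coinvariants of $H_i(\conf_3(K_n);\Q)$ in characteristic $0$. The $S_3$-action commutes with the FI-structure, so the coinvariant functor preserves multiplicity stability from the same $n$. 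Alternatively, run the argument directly on the chain complex of $D^U_3$, whose chain modules are quotients of free modules by a group acting compatibly with the FI$\sharp$-structure, hence again free with generating degree $\leq 6$.

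The step I expect to need the most care is the verification in the second step. There are two points. One is that the FI$\sharp$-action really is well defined on the subdivided complex, in particular that subdivision vertices $ij$ behave correctly when only one of $i,j$ lies in the true domain; such a vertex must go to $0$. The other is the bookkeeping that no cell needs more than six labels. The remark preceding Theorem \ref{starBound} already tells us we do not need compatibility with the differential, so freeness of the individual $C_j$ is all that is required.
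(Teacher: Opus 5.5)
Your proposal is correct and follows the paper's own argument: the FI$\sharp$-structure makes the chain modules of $D_3(K_n')$ free, $s(C_j)=6$ for $j=0,\dots,3$, and Theorem \ref{relationBound} then gives the bound $6+6=12$. For the unordered case, use your second route, namely that the chain modules of $D^U_3$ are themselves FI$\sharp$-modules of generating degree $\le 6$; the first route does not work as stated, because multiplicity stability of $H_i(\conf_3(K_n))$ as $S_n$-representations does not by itself force its $S_3$-coinvariant summand to stabilize from the same $n$.
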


\paragraph{\textit{$S_n$-equivariant chain modules}}

\begin{lem}
The cellular chain modules of $D_3(K_n)$ have the following descriptions as $S_n$-representations, for $n\geq 6$.
\begin{align*}
C_3 &\cong 2M(3) \oplus 16M(4)\oplus 9M(5)\oplus M(6)\\
C_2 &\cong 18M(3) \oplus 54M(4) \oplus 15M(5) \oplus 9M(\ind_{S_3\times S_2}^{S_5} \reg_3\otimes \Q)\oplus 3M(\ind_{S_4\times S_2}^{S_6}\reg_4\otimes \Q)\\
C_1 &\cong 36M(3)\oplus 45M(4)\oplus 6M(\ind_{S_2\times S_2}^{S_4}\reg_2\otimes \Q)\oplus 3M(5)\oplus 18M(\ind_{S_3\times S_2}^{S_5}\reg_3\otimes \Q)\\
&\qquad \oplus 3M(\ind_{S_2^{\times 3}}^{S_6} \reg_2 \otimes \Q \otimes \Q)\\
C_0 &\cong 3M(2) \oplus 17M(3) \oplus 7M(4) \oplus 9M(\ind_{S_2\times S_2}^{S_4}\reg_2\otimes \Q) \oplus 3M(\ind_{S_2\times S_2}^{S_5} \Q\otimes \Q) \\
&\qquad \oplus 3M(\ind_{S_3\times S_2}^{S_5}\reg_3\otimes \Q) \oplus M(\ind_{S_2^{\times 3}}^{S_6} \Q^{\otimes 3})
\end{align*}

\end{lem}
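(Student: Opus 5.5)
The plan is to repeat the strategy of the two-particle lemmas. We decompose each chain module $C_i(D_3(K_n))$ into $S_n$-orbits of cells (``combinatorial types''). We show that each orbit spans a free FI-module of the form $M(W)$, and then we count the orbits.

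First, recall the cell structure of $K_n'$. Its vertices are the original vertices $i\in[n]$ and the midpoints $ij$, with $ij=ji$. Its edges are the half-edges $ij\to i$ and $ij\to j$. A cell of $D_3(K_n)$ is an ordered triple $\sigma_1\times\sigma_2\times\sigma_3$ of vertices or half-edges of $K_n'$ whose closures are pairwise disjoint. Its dimension is the number of half-edges among the $\sigma_k$. To such a cell we attach its \emph{support}: the set of letters of $[n]$ appearing in the labels of its vertices and edges. The support has size between $2$ and $6$, which gives the claimed generating degree $6$.

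The key structural observation concerns orientations and stabilizers. Every half-edge $ij\to i$ is oriented toward an original vertex, so no permutation reverses an edge. Hence the $S_n$-action on cells is a signed permutation action with all signs $+1$, and each orbit of a cell $c$ with support of size $a$ spans
\[
\ind_{\mathrm{Stab}(c)\times S_{n-a}}^{S_n}\Q \;\cong\; M\bigl(\ind_{\mathrm{Stab}(c)}^{S_a}\Q\bigr),
\]
where $\mathrm{Stab}(c)\le S_a$ is the stabilizer of $c$ inside the symmetric group on its support. I would check that the stabilizer is generated by transpositions $(i\,j)$ coming exactly from midpoint vertices $ij$ appearing as $0$-cells in $c$ whose letters $i,j$ occur nowhere else in $c$. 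Half-edges, original vertices, and midpoints sharing a letter with another factor all pin down their letters. So $\mathrm{Stab}(c)\cong S_2^{k}$, where $k$ is the number of such ``free'' midpoints. Its summand is therefore $M(\ind_{S_{a-2k}\times S_2^k}^{S_a}\reg_{a-2k}\otimes\Q^{\otimes k})$, which is the shape appearing in the statement. In particular, $k=0$ gives $M(a)$. Alternatively, freeness follows from Theorem \ref{sharpisfree}, by letting FI$\sharp$ act naturally on cells supported in the true domain and by $0$ otherwise, exactly as in the earlier sections.

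It then remains to enumerate orbit types for $i=0,1,2,3$, recording for each type the pair $(a,k)$. I would organize this by the ordered pattern of $\sigma_1,\sigma_2,\sigma_3$, each being an original vertex, a midpoint, or a half-edge. Within each pattern I would count the ways the letters can coincide subject to the disjointness of closures. For example, a half-edge $ij\to i$ occupies the letters $i$ and $j$ but forbids only the vertices $i$ and $ij$. So another particle may sit at $j$, or on a half-edge $jk\to j$, and this creates the coincidence patterns that drive the counts.

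This exhaustive case enumeration is the main obstacle, especially for $C_1$ and $C_0$, where several free midpoints can occur. To guard against miscounting, I would check each proposed decomposition in two ways. First, compare $\dim$ of the right-hand side, $\sum \binom{n}{a}\,a!/2^k$ over types, against a direct count of cells of each dimension in $D_3(K_n)$ as a polynomial in $n$. Second, compute the $S_6$-characters of the chain groups by computer for $n=6$ and compare them with the characters of the proposed free modules. The bound $n\ge 6$ is exactly what ensures that every orbit type, with support up to $6$, is realized.
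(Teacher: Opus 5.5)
Your proposal follows the paper's proof: split each $C_i(D_3(K_n))$ into combinatorial types ($S_n$-orbits of cells) and identify the free module each type generates. Your stabilizer analysis makes explicit what the paper leaves to its appendix and code: all signs are $+1$ because every half-edge points toward an original vertex, and $\mathrm{Stab}(c)\cong S_2^k$ comes from the free midpoints. Carrying out the enumeration you describe reproduces exactly the stated multiplicities, e.g.\ $2,16,9,1$ for $C_3$ and $3,17,7,9,3,3,1$ for $C_0$.
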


\begin{proof}
Each chain module of $D_3(K_3)$ is generated by cells which are partitioned into combinatorial types. We enumerate all combinatorial types in each chain module, see Appendix \ref{appendix: combo types of D3Kn}. Note that the Appendix only records the combinatorial types for the unordered configuration space; for the ordered case, one also needs to take into consideration different ordering of the edges and vertices. We omit the description for simplicity. The full list can be found in the code. Then for each combinatorial type, we compute the representation afforded on the submodule it generates.
\end{proof}

\paragraph{\textit{Stable homology}}
We briefly comment on our algorithm for computing homology representations in this case. The chain modules involved are large; for example, $C_2(\conf_3(K_6))$ has dimension 36720 and $C_1(\conf_3(K_6))$ has dimension 30780. To avoid doing linear algebra on these large spaces, we take advantage of the representation structures. We rely on the following lemma, which is a consequence of Schur's Lemma. Recall that $\Sp(\lambda)$ is the Specht module indexed by the partition $\lambda$.
\begin{lem}
    Suppose $f:V\to V'$ is a morphism of $S_n$-representations. Suppose further that $V=V_{\lambda_1}\oplus \cdots\oplus V_{\lambda_n}$ where each $V_{\lambda_i}\cong m_{i}\Sp(\lambda_i)$. Then the multiplicity of $\Sp(\lambda_i)$ in $\ker f$ as an $S_n$-representation is the nullity of $f|_{V_{\lambda_i}}$ divided by $\dim(\Sp(\lambda_i))$.
\end{lem}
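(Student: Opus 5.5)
The plan is to use the fact that $\ker f$ is itself an $S_n$-subrepresentation of $V$, and that kernels of equivariant maps respect isotypic decompositions. First I would observe that, since each $V_{\lambda_i}\cong m_i\Sp(\lambda_i)$ and the $\lambda_i$ are (implicitly) distinct, $V_{\lambda_i}$ is exactly the $\lambda_i$-isotypic component of $V$. Concretely, $V_{\lambda_i}$ is the image of the central idempotent $e_{\lambda_i}=\frac{\dim \Sp(\lambda_i)}{n!}\sum_{g\in S_n}\chi_{\lambda_i}(g)g$ acting on $V$, as in \cite[Section 2.6--2.7]{Serre1977}. Here the characters are rational, so this works over $\Q$.

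Next I would show that $\ker f$ decomposes as $\ker f=\bigoplus_i \ker(f|_{V_{\lambda_i}})$. One inclusion is clear. For the other, take $v\in\ker f$ and write $v=\sum_i e_{\lambda_i}v$. The idempotents $e_{\lambda_i}$ are elements of the group algebra, and $f$ is equivariant, so $f(e_{\lambda_i}v)=e_{\lambda_i}f(v)=0$. Hence each component $e_{\lambda_i}v$ lies in $\ker(f|_{V_{\lambda_i}})$.

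Then I would identify each summand. The subspace $\ker(f|_{V_{\lambda_i}})$ is an $S_n$-stable subspace of $V_{\lambda_i}\cong m_i\Sp(\lambda_i)$. By Maschke's theorem and Schur's lemma (the Jordan--H\"older uniqueness of irreducible constituents), every subrepresentation of $m_i\Sp(\lambda_i)$ is isomorphic to $k\,\Sp(\lambda_i)$ for some $k\le m_i$. Its dimension is therefore $k\dim\Sp(\lambda_i)$, so $k=\operatorname{nullity}(f|_{V_{\lambda_i}})/\dim\Sp(\lambda_i)$.

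Finally, combining these, the $\lambda_i$-isotypic part of $\ker f$ is exactly $\ker(f|_{V_{\lambda_i}})$, since the other summands contain no copy of $\Sp(\lambda_i)$. This gives the stated multiplicity.

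The only delicate point is the second step, the compatibility of the kernel with the isotypic splitting. It relies on $V_{\lambda_i}$ being the full isotypic component rather than an arbitrary complement, which is why I would phrase it through the central idempotents. If the $\lambda_i$ were allowed to repeat, I would first merge the repeated summands.
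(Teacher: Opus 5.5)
Your proposal is correct and follows the route the paper has in mind: the paper gives no proof and only says the lemma is a consequence of Schur's Lemma, and your argument (identify $V_{\lambda_i}$ as the isotypic component, split $\ker f$ along isotypic components, then count dimensions) spells that out. You are also right that the $\lambda_i$ must be implicitly distinct; otherwise the statement as written fails.
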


However, in practice, when $V$ has a large dimension, it is often expensive to compute a basis for $V_{\lambda_i}$. We simplify the computations by observing that each chain module $C$ is a direct sum of submodules $C_1
\oplus \cdots \oplus C_k$, each given by a combinatorial type of cells. Therefore, we compute $f|_{C_j|_{V_{\lambda_i}}}$ and then compute the dimension of the span of $\cup_j f|_{C_j|_{V_{\lambda_i}}}$.

We were able to compute homology representations for $\conf_3(K_n)$ up to $n=7$; the results are recorded in Table \ref{table: H3Kn}. This allows us gather information on the stable multiplicity of irreducible subrepresentations indexed by certain partitions.

\begin{thm}\label{thm: three particles on complete}
For $n\geq 6$, the multiplicity of the trivial representation $\mathrm{Sp}_n(\emptyset)$ in $H_i(\conf_3(K_n))$ is zero for $i=3,1,0$ and in $H_2(\conf_3(K_n))$ is two. For $n\geq 7$, the multiplicity of $\mathrm{Sp}_n(1)$ in $H_i(\conf_3(K_n))$ is zero for $i=3,1,0$ and in $H_2(\conf_3(K_n))$ is six.
\end{thm}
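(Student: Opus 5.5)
The plan is to use the refined, partition-by-partition part of Theorem \ref{relationBound} rather than the global stable range of Theorem \ref{completeBound}. That theorem says that for a complex of free FI-modules $\cdots\to M^2\to M^1\to M^0\to\cdots$, the multiplicity of $\Sp(\lambda[n])$ in the homology at $M^1$ is constant once $n\geq \max\{s(M^2),s(M^1)\}+|\lambda|$.

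First I would set up the complex. By the discussion preceding Theorem \ref{completeBound}, the cellular chain modules $C_3,C_2,C_1,C_0$ of $D_3(K_n)$, built over the once-subdivided graph $K_n'$, are free via the FI$\sharp$-structure, and each has stable degree $6$. The Abrams model applies, so $H_i(\conf_3(K_n))\cong H_i(C_\bullet)$. For each $i\in\{0,1,2,3\}$, apply Theorem \ref{relationBound} to the three-term piece $C_{i+1}\to C_i\to C_{i-1}$, padding with zero modules at the ends, which have stable degree $0$. In every case the relevant maximum of stable degrees is at most $6$. Taking $\lambda=\emptyset$ then gives constancy of the multiplicity of $\Sp_n(\emptyset)$ for $n\geq 6$, and taking $\lambda=(1)$ gives constancy of the multiplicity of $\Sp_n(1)$ for $n\geq 7$. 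These are exactly the ranges in the statement, so it suffices to determine each multiplicity at the single values $n=6$ and $n=7$ respectively.

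Second, I would compute these multiplicities at $n=6$ (for the trivial representation) and at $n=7$ (for $\Sp_7(1)=\Sp(6,1)$) using the Schur's-lemma reduction stated above. For the trivial representation, the isotypic component of each $C_j$ is the space of $S_6$-invariants. This space has a basis of orbit sums, one for each $S_6$-orbit of cells, and these orbits are organized by the combinatorial types listed in the appendix. So the restricted differentials are small matrices, and $\dim H_i^{S_6}$ is obtained from their ranks. For $\Sp(6,1)$, I would use the fact that its multiplicity in $C_j$ equals $\dim C_j^{S_6}-\dim C_j^{S_7}$, the invariants under the point stabilizer $S_6\subset S_7$ minus the full invariants. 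Applying this to the homology reduces the computation to ranks of the differentials on $S_6$-invariants and $S_7$-invariants of $C_\bullet(D_3(K_7))$, both of which are spanned by orbit sums. The resulting numbers are those recorded in Table \ref{table: H3Kn}, and reading them off gives the claimed multiplicities ($2$ and $6$ in $H_2$, and $0$ in $H_3$ and $H_1$). As a cross-check, I would compare the alternating sum of the multiplicities in the $H_i$ with the alternating sum in the $C_i$, which can be computed directly from the Lemma via the Pieri rule, using the equivariant Euler characteristic.

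The step I expect to be the main obstacle is the $n=7$ computation. The full chain modules are enormous: $C_2(\conf_3(K_6))$ alone already has dimension $36720$, and the modules grow further at $n=7$. The plan avoids full-space linear algebra by working one combinatorial type at a time, projecting each type's submodule onto the relevant isotypic component (via orbit sums, or via the projection formula of \cite[Section 2.7]{Serre1977}), and computing the span of the images, as described before the statement. The delicate point is bookkeeping orientations and signs on the subdivided edges, so that the orbit-sum bases are genuinely invariant and do not collapse to zero because of a sign-reversing stabilizer.
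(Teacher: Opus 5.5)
Your proposal follows the paper's proof. There, too, the argument is that $s(C_j(D_3(K_n)))=6$ for every $j$. The per-partition clause of Theorem \ref{relationBound}, applied to $C_{i+1}\to C_i\to C_{i-1}$, then fixes the multiplicity of $\Sp_n(\emptyset)$ from $n=6$ on and of $\Sp_n(1)$ from $n=7$ on, and the actual values come from the computer calculation. You extract only those two multiplicities, using orbit-sum bases of $C_\bullet(D_3(K_7))^{S_6}$ and $C_\bullet(D_3(K_7))^{S_7}$, the identity that the multiplicity of $\Sp(6,1)$ in an $S_7$-representation $V$ is $\dim V^{S_6}-\dim V^{S_7}$, and exactness of invariants over $\Q$. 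This is a cleaner and cheaper recipe than the paper's isotypic projections. The sign issue you worry about does not arise in the ordered case: $S_n$ preserves the orientations $ij\to i$ and never permutes the factors of a cell, so every orbit sum is a nonzero invariant.

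Two things need correcting.

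\emph{First}, the clause ``zero for $i=0$'' for the trivial representation is false. The space $\conf_3(K_n)$ is connected, so $H_0\cong\Q\cong\Sp_n(\emptyset)$ (Remark \ref{rmk: H0 is triv}, and the entry $H_0=(6)$ in Table \ref{table: H3Kn}), and your computation would return $1$ there. Your final tally lists only $H_3$ and $H_1$ and silently drops $H_0$. You should say explicitly that your argument gives multiplicity $1$ for $\Sp_n(\emptyset)$ in $H_0$, so the statement must be corrected. The multiplicity of $\Sp_n(1)$ in $H_0$ is indeed $0$, for the same reason.

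\emph{Second}, Table \ref{table: H3Kn} records only $H_3$ at $n=7$. So the multiplicities of $\Sp_7(1)$ in $H_2$ and $H_1$ cannot be ``read off'' from it; they must come from the $n=7$ computation you describe. Here $H_3(\conf_3(K_7))\cong\Sp(1^7)$ and $H_0\cong\Q$ contain no copy of $\Sp(6,1)$. Hence the equivariant Euler characteristic, computed from the chain modules via the Pieri rule, determines $m_2-m_1$, where $m_i$ is the multiplicity of $\Sp(6,1)$ in $H_i$. Only one of $m_1,m_2$ therefore needs an actual rank computation.
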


\begin{proof}
The homology group $H_i(\conf_3(K_n))$ is computed by the following part of the cellular chain complex for $D_3(K_n)$: \[C_{i+1}(D_3)\to C_i(D_3) \to C_{i-1}(D_3).\] We know that $s(C_i(D_3))=6$ for $i\leq 3$ and it is zero for $i>3$. By Theorem \ref{relationBound}, the multiplicity of the trivial representation in $H_i(\conf_3(K_n))$ stabilizes at $n=6+0=6$. The multiplicity of $\mathrm{Sp}_n(1)$ stabilizes at $n=6+1=7$.
\end{proof}


\begin{table}[th]
\caption{$S_n$-representations afforded by the degree-3 rational homology of the ordered configuration space of two particles on the complete graph $K_n$ for $4\leq n\leq 7$.}
\label{table: H3Kn}
\begin{tabular}{@{}ccp{0.80\textwidth}c@{}} \toprule
$n$ & $i$ & $H_i(\conf_3(K_{n});\Q)$ & Time (s) \\
\midrule
4  & 3 & $0$ & 2 \\
 & 2 & $2(4) + 3(3,1)$ & 1404\\
 & 1 & $3(2,1,1) + 3(1,1,1,1)$ & \\
 & 0 & $(4)$ & \\\midrule
5  & 3 & $0$ & 104 \\
& 2 & $3(1,1,1,1,1) + 6(2,1,1,1) + 6(2,2,1) + 9(3,1,1) + 6(3,2) + 6(4,1) + 2(5)$ & 7034 \\
& 1 & $3(3,1,1)$ &  \\
& 0 & $(5)$ &  \\\midrule
6  & 3 & $0$ & 1555  \\
& 2 & $2(1, 1, 1, 1, 1, 1) + 7(2, 1, 1, 1, 1) + 12(2, 2, 1, 1) + 8(2, 2, 2) + 8(3, 1, 1, 1) + 16(3, 2, 1) + 5(3, 3) + 9(4, 1, 1) + 9(4, 2) + 6(5, 1) + 2(6)$  & 32062 \\
& 1 & $3(4,1,1)$  &  \\
& 0 & $(6)$ &  \\\midrule
7  &3 & $(1^7)$  & 17157  \\
\bottomrule
\end{tabular}
\end{table}


The data on the configuration spaces of one, two, and three particles on the complete graph $K_n$ led us to the following conjecture.
\begin{conjecture}
    For $n\geq 5$, there is an isomorphism of $S_n$-representations \[H_1(\conf_k(K_n))\cong k\,\Sp_n(1,1).\]
\end{conjecture}

Computations were done in SageMath\cite{sagemath} on the second author's personal laptop and the computer server at the Department of Mathematics and Statistics at UiT, with the
following system specifics, respectively: 8-core Apple M3 CPU on Darwin 25.4.0 (2026-03-19) arm64; 2x24-core Intel Xeon Platinum 8168 CPU (2.70 GHz) on Ubuntu GNU/Linux 6.8.0-101-generic (2026-02-09) x86\_64.


\bibliographystyle{alpha}
\bibliography{ref}

\appendix
\section{Combinatorial types of cells in $D^3(K_n)$}
\label{appendix: combo types of D3Kn}
\begin{table}[th]
\caption{Combinatorial types of 3-dimensional cells in $D^3(K_n)$.}
\label{table: 3 cells in D^3(K_n)}


\end{table}


\begin{table}[th]
\caption{Combinatorial types of 2-dimensional cells in $D^3(K_n)$.}
\label{table: 2 cells in D^3(K_n)}
%

\end{table}

\begin{table}[th]
\caption{Combinatorial types of 1-dimensional cells in $D^3(K_n)$.}
\label{table: 1 cells in D^3(K_n)}
%
\end{table}

\begin{table}[th]
\caption{Combinatorial types of 0-dimensional cells in $D^3(K_n)$.}
\label{table: 0 cells in D^3(K_n)}
%
\end{table}

\end{document}